\theoremstyle{plain}
\newtheorem{thm}{Theorem}[section]
\newtheorem{prop}[thm]{Proposition}
\newtheorem{lemma}[thm]{Lemma}
\newtheorem{cor}[thm]{Corollary}
\newtheorem{defpr}[thm]{Definition/Proposition}
\theoremstyle{definition}
\newtheorem{defn}[thm]{Definition}
\newtheorem*{defn*}{Definition}
\newtheorem*{question*}{Question}
\newtheorem{example}[thm]{Example}
\newtheorem*{example*}{Example}
\newtheorem{rem}[thm]{Remark}
\newtheorem*{rem*}{Remark}
\newtheorem{construction}[thm]{Construction}
\newcommand{\field}[1]{\mathbb{#1}}
\newcommand{\N}{\field{N}}
\newcommand{\Z}{\field{Z}}
\newcommand{\ideal}[1]{\mathfrak{#1}}
\newcommand{\m}{\ideal{m}}
\newcommand{\p}{\ideal{p}}
\newcommand{\func}[1]{\mathrm{#1} \,}
\newcommand{\Spec}{\func{Spec}}
\newcommand{\im}{\func{im}}
\newcommand{\arrow}[1]{\stackrel{#1}{\rightarrow}}
\newcommand{\ra}{\rightarrow}
\DeclareMathOperator{\ann}{ann}
\DeclareMathOperator{\Hom}{Hom}
\newcommand{\be}{\begin{enumerate}}
\newcommand{\ee}{\end{enumerate}}
\newcommand{\li}%{{\mathrm{lic}}}
 {\leftfootline}
\newcommand{\onto}{\twoheadrightarrow}
\newcommand{\into}{\hookrightarrow}
\newcommand{\cA}{\mathcal{A}}
\newcommand{\cM}{\mathcal{M}}
\newcommand{\cP}{\mathcal{P}}
\renewcommand{\phi}{\varphi}
\DeclareMathOperator{\Frac}{Frac}
\DeclareMathOperator{\Soc}{Soc}
\newcommand{\subsel}{submodule selector}
\newcommand{\extop}{extensive operation}
\newcommand{\funcss}{functorial}
\newcommand{\funceo}{functorial}
\newcommand{\surfuncss}{surjection-functorial}
\newcommand{\surfunceo}{surjection-functorial}
\newcommand{\radss}{co-idempotent}
\newcommand{\reseo}{residual}
\newcommand{\resop}{residual operation}
\newcommand{\opsub}{order-preserving on submodules}
\newcommand{\opamb}{order-preserving on ambient modules}
\newcommand{\idemeo}{idempotent}
\newcommand{\idemss}{idempotent}
\newcommand{\dual}{\smallsmile}
\newcommand{\cl}{{\mathrm{cl}}}
\let\int\relax
\DeclareMathOperator{\int}{i}
\newcommand{\nzd}{non-zerodivisor}
\newcommand{\fg}{finitely generated}
\newcommand{\charp}{characteristic $p>0$}
\DeclareMathOperator{\tto}{to}
\DeclareMathOperator{\tr}{tr}
\DeclareMathOperator{\dv}{div}
\DeclareMathOperator{\tom}{tom}
\newcommand{\CM}{Cohen-Macaulay}
\newcommand{\sop}{system of parameters}
\author{Neil Epstein}
\address{Department of Mathematical Sciences \\ George Mason University \\ Fairfax, VA  22030}
\email{nepstei2@gmu.edu}
\author{Rebecca R.G.}
\address{Department of Mathematical Sciences \\ George Mason University \\ Fairfax, VA  22030}
\email{rrebhuhn@gmu.edu}
\title{Closure-interior duality over complete local rings}
\subjclass[2010]{Primary: 13J10, Secondary: 13A35, 13B22, 13C12, 13C60}
\keywords{closure operation, test ideal, interior operation, trace, torsion, tight closure, integral closure, Matlis duality, complete local rings}
\date{April 22, 2021}
\begin{document}
\begin{abstract}
We define a duality operation connecting closure operations, interior operations, and test ideals, and describe how the duality acts on common constructions such as trace, torsion, tight and integral closures, and divisible submodules. This generalizes the relationship between tight closure and tight interior given in \cite{nmeSc-tint} and allows us to extend commonly used results on tight closure test ideals to operations such as those above.
\end{abstract}

\maketitle
\setcounter{tocdepth}{1} 
\tableofcontents

\section{Introduction}\label{sec:intro}
Closure operations have long been an important subject in commutative algebra (c.f. the first named author's survey \cite{nme-guide2}).  Many of these closures, including tight closure, are  \emph{residual}, i.e., the closure of a submodule $N$ of an $R$-module $M$ is computable from the closure of $0$ in $M/N$.  Coordinated with this is the study of \emph{test ideals}, traditionally a part of the study of tight closure theory.  In its simplest form, the test ideal is the ideal of ring elements that uniformly kill the closure of zero in every module.  In the special case of complete local rings, the tight closure test ideal can be recovered from the  Matlis dual to the closure of zero in the injective hull of the residue field.  Duals to closures of zero in other modules can then be seen as yielding an \emph{interior} operation, as detailed in \cite{nmeSc-tint}. The specific case of module closures and trace ideals is discussed in \cite{PeRG}.

Techniques for analyzing these structures are scattered throughout the literature. In this paper, we prove that this duality holds for all residual closure operations on complete local rings, demonstrate how properties of closures/interiors pass to their dual operations, and show how this framework applies to closures and interiors that appear throughout the literature.

The paper is structured as follows.

Section~\ref{sec:first} introduces our most general context of \subsel s and \extop s on a category of modules over an associative ring. In it, we give a bijection (see Proposition~\ref{pr:resid}) between \resop s on pairs of modules $N \subseteq M$ and \subsel s on (single) modules.  We then detail how properties of \subsel s correspond to properties of \resop s across this bijection (see Proposition~\ref{pr:ssres}).  This allows us to reduce every aspect of the study of \reseo\ closure operations to the study of a certain class of submodule selectors -- namely the \radss\ preradicals.

In Section~\ref{sec:ssdual}, we restrict our attention to categories of modules over a complete Noetherian local ring that satisfy Matlis duality —- a restriction we commonly rely on throughout the paper.  In that context, we introduce a duality between \subsel s (the \emph{smile dual}), and in Theorem~\ref{thm:smsdual}, we detail how properties of \subsel s relate across this duality.

Combining these approaches, in Section~\ref{sec:cidual}, we construct a duality between \resop s and \subsel s that restricts to a duality between \reseo\ closure operations on the one hand and interior operations on the other.  In Theorem~\ref{thm:clintdual} we prove this, along with an exploration of how several other properties translate across this duality.

Section~\ref{sec:testprerad} explores the general notion of test ideals.  In it, we show that a version of the theorems describing tight closure test ideals as a submodule of the injective hull of the residue field \cite{HHmain} and as a sum of images of maps to $R$ \cite{HaTa-gentest}
holds quite generally for residual closure operations arising from preradicals (i.e. \funcss\ \subsel s) on a category of artinian $R$-modules.  See Theorem~\ref{thm:preradical}.

Section~\ref{sec:exact} is devoted to exactness properties on preradicals.  We show in Proposition~\ref{pr:dualhered} that left exactness is dual to surjection-preservation.

In Section~\ref{sec:lim}, we introduce the notions of \emph{direct} and \emph{inverse limits} of submodule selectors, and we show (see Propositions~\ref{pr:dirlim} and \ref{pr:invlim}) that many good properties are preserved by our duality.  This framework becomes useful later, for example in Corollary \ref{cor:H0Idual}, Proposition \ref{pr:tordiv}, and Proposition \ref{pr:mixedchardual}.

The next few sections apply the above framework to various special cases.

Section~\ref{sec:trace} develops the notions of trace and module torsion, both indexed by \emph{pairs} of modules, as smile-duals to each other (see Theorem~\ref{thm:tracedual}). We show that our general notion of trace behaves well with respect to flat base change (see Theorem~\ref{thm:traceloc}).  We relate these notions to module closures (see Remark~\ref{rmk:modclosure}), zeroth local cohomology, and $I$-adic completion (see Corollary~\ref{cor:H0Idual}).  In Proposition~\ref{pr:testidealmc}, we find an application of Theorem~\ref{thm:preradical} to module closures by finite modules.

Section~\ref{sec:tordiv} applies our framework to notions of torsion (with respect to a multiplicative set) and divisibility.  Namely, it turns out that the $W$-torsion preradical is smile-dual to the $W$-divisible preradical (see Proposition~\ref{pr:tordiv}, where we also identify the relevant properties of these preradicals).

Section~\ref{sec:tcic} shows how our framework applies to the inspiration for our study, namely tight closure, (liftable) integral closure, and their associated test ideals.

Section~\ref{sec:almost} connects our framework to some mixed characteristic operations of P\'erez and the second named author in \cite{PeRG}, meant to provide a mixed characteristic version of tight closure theory.  Our insistence on indexing both trace and module torsion by a \emph{pair}, as well as our development of limits of submodule selectors, are particularly relevant here.

Section~\ref{sec:coloc}, the final section of our paper, concerns localization and the style of ``colocalization'' favored by K. Smith and A. Richardson, particularly in relation to smile-duality on systems of \subsel s.  As a result, we show that despite the fact \cite{BM-unloc} that tight closure of finite modules does not commute with localization, we have that tight closure in \emph{Artinian} modules commutes with \emph{colocalization} (see Theorem~\ref{thm:tccoloc}).  We use this to shed light on the result of Lyubeznik and Smith \cite[Theorem 7.1]{LySm-Test} that formation of the big test ideal commutes with localization and completion (see Corollary~\ref{cor:tcccp} and its proofs).

\section{Submodule selectors and residual operations}\label{sec:first}

In this section we define and give the basic properties of submodule selectors and residual operations, which will be the fundamental objects of study in this paper. These definitions are inspired by and generalize the notions of closure operation and interior operation. We work as generally as possible in this section, so that we may choose which additional assumptions to work with in later sections.

For the current section, $R$ is an arbitrary ring with identity (not necessarily commutative), and all modules are left $R$-modules.

\begin{defn}
Let $\cM$ be a class of $R$-modules that is closed under taking submodules and quotient modules.  Let $\cP := \cP_\cM$ denote the set of all pairs $(L,M)$ where $M \in \cM$ and $L$ is a submodule of $M$ in $\cM$.

A \emph{\subsel} is a function $\alpha: \cM \rightarrow \cM$ such that \begin{itemize}
 \item $\alpha(M) \subseteq M$ for each $M \in \cM$, and
 \item For any isomorphic pair of modules $M, N \in \cM$ and any isomorphism $\phi: M \rightarrow N$, we have $\phi(\alpha(M)) = \alpha(\phi(M))$.
  \end{itemize}
 We say that a \subsel\ $\alpha$ is \begin{itemize}
  \item \emph{order-preserving} if for any $(L,M) \in \cP$, we have $\alpha(L) \subseteq \alpha(M)$;
  \item \emph{\surfuncss} if for any surjective map $\pi: M \twoheadrightarrow N$ in $\cM$, we have $\pi(\alpha(M)) \subseteq \alpha(N)$;
  \item \emph{\funcss} if $\alpha$ is order-preserving and \surfuncss -- \emph{i.e.} if for any $g: M \rightarrow N$ in $\cM$, we have $g(\alpha(M)) \subseteq \alpha(N)$;
  \item \emph{\idemss} if for any $M \in \cM$ we have $\alpha(\alpha(M)) = \alpha(M)$.
    \item \emph{\radss} if for any $M \in \cM$, we have $\alpha(M/\alpha(M)) = 0$;
  \item an \emph{interior operation} if it is \idemss\ and order-preserving.
 \end{itemize}
  \end{defn}

\begin{defn}  
 An \emph{\extop} on $\cM$ is a function $e: \cP \rightarrow \cM$ such that \begin{itemize}
  \item For any $(L,M) \in \cP$, we have $L \subseteq e(L,M) \subseteq M$, and 
  \item For any isomorphic pair of modules $M,N \in \cM$, any isomorphism $\phi: M \rightarrow N$, and any submodule $L$ of $M$, we have $\phi(e(L,M)) = e(\phi(L),N)$.
  \end{itemize}
  When $e$ is an \extop, we use the standard notation for closure operations $L^e_M := e(L,M)$.  Hence, the second condition above becomes $\phi(L^e_M) = \phi(L)^e_N$.
  
We say that an \extop\ $e$ on $\cM$ is \begin{itemize}
 \item \emph{\reseo} if for any surjective map $q: M \twoheadrightarrow P$ in $\cM$, we have $(\ker q)^e_M = q^{-1}(0^e_P)$ (because $q$ is a surjection, we also have $q( (\ker q)_M^e ) = 0_P^e$);
 \item \emph{\opsub} if whenever $L \subseteq M \subseteq N$ are in $\cM$ with $(L,N), (M,N) \in \cP$, we have $L^e_N \subseteq M^e_N$.
 \item \emph{\opamb} if whenever $L \subseteq M \subseteq N$ are in $\cM$ with $(L,M), (L,N) \in \cP$, we have $L^e_M \subseteq L^e_N$.
 \item \emph{\idemeo} if for any $(L,M) \in \cP$, $(L^e_M)^e_M = L^e_M$.
 \item \emph{\surfunceo} if whenever $(L,M) \in \cP$ and $\pi: M \onto N$ a surjection in $\cM$, we have $\pi(L^e_M) \subseteq \pi(L)^e_N$.
 \item \emph{\funceo} if whenever $(L,M) \in \cP$ and $g: M \ra N$ is a map in $\cM$, we have $g(L^e_M) \subseteq g(L)^e_N$.
 \item a \emph{closure operation} if it is \opsub\ and \idemeo.
 \end{itemize}
The residual property will take on special import for us, so that we will use the expression \emph{\resop} to mean a residual \extop. 
\end{defn}

Next we construct a map $\rho$ that takes as input a submodule selector and outputs a residual operation, and a map $\sigma$ that takes as input a residual operation and outputs a submodule selector.

\begin{construction}\label{cons:res}
If $\alpha$ is a \subsel\ on $\cM$, define $\rho(\alpha): \cP \ra \cM$ by \[L^{\rho(\alpha)}_M := \pi^{-1}(\alpha(M/L)),\] where $\pi: M \twoheadrightarrow M/L$ is the natural surjection. 

Conversely, let $r$ be a \resop\ on $\cM$.  Define $\sigma(r): \cM \ra \cM$ by \[\sigma(r)(M) := 0^r_M.\]  
\end{construction}

\begin{prop}\label{pr:resid}
The function $\sigma$ given in Construction~\ref{cons:res} gives a bijection between \resop s on $\cM$ and \subsel s on $\cM$, with inverse given by $\rho$.
\end{prop}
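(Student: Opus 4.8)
The plan is to verify four things in turn, which together say exactly that $\sigma$ and $\rho$ are mutually inverse bijections between the set of \resop s on $\cM$ and the set of \subsel s on $\cM$: (a) $\rho(\alpha)$ is a \resop\ whenever $\alpha$ is a \subsel; (b) $\sigma(r)$ is a \subsel\ whenever $r$ is a \resop; (c) $\sigma(\rho(\alpha)) = \alpha$ for every \subsel\ $\alpha$; and (d) $\rho(\sigma(r)) = r$ for every \resop\ $r$.

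For (a): the containment $L \subseteq L^{\rho(\alpha)}_M \subseteq M$ is immediate from $\alpha(M/L) \subseteq M/L$ and $\pi^{-1}(0) = L$. For isomorphism-equivariance, an isomorphism $\phi \colon M \to N$ descends to an isomorphism $\bar\phi \colon M/L \to N/\phi(L)$ with $\pi_N \circ \phi = \bar\phi \circ \pi_M$; since $\phi$ is bijective, $\phi(\pi_M^{-1}(X)) = \pi_N^{-1}(\bar\phi(X))$ for every submodule $X \subseteq M/L$, and taking $X = \alpha(M/L)$ and using $\bar\phi(\alpha(M/L)) = \alpha(N/\phi(L))$ gives $\phi(L^{\rho(\alpha)}_M) = \phi(L)^{\rho(\alpha)}_N$. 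For the residual property, given a surjection $q \colon M \onto P$, factor $q = \bar q \circ \pi$ with $\pi \colon M \onto M/\ker q$ canonical and $\bar q \colon M/\ker q \to P$ the induced isomorphism; equivariance of $\alpha$ under $\bar q$ gives $\alpha(M/\ker q) = \bar q^{-1}(\alpha(P))$, so chasing preimages through $q = \bar q \circ \pi$ yields $(\ker q)^{\rho(\alpha)}_M = \pi^{-1}(\alpha(M/\ker q)) = q^{-1}(\alpha(P))$, and under the canonical identification $P/0 \cong P$ we have $0^{\rho(\alpha)}_P = \pi_P^{-1}(\alpha(P/0)) = \alpha(P)$, so this equals $q^{-1}(0^{\rho(\alpha)}_P)$, as required. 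Part (b) is shorter: $0^r_M \subseteq M$ by the definition of an \extop, and the isomorphism-equivariance of $r$ gives $\phi(0^r_M) = 0^r_N$ for any isomorphism $\phi \colon M \to N$, which is precisely the \subsel\ condition for $\sigma(r)$. Note that (a) uses only the two \subsel\ axioms, so no extra hypothesis on $\alpha$ is needed for $\rho(\alpha)$ to be residual.

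For (c), compute $\sigma(\rho(\alpha))(M) = 0^{\rho(\alpha)}_M = \pi^{-1}(\alpha(M/0))$, where $\pi \colon M \to M/0$ is canonical; as $\pi$ is an isomorphism with inverse the canonical map $\iota \colon M/0 \to M$, we get $\pi^{-1}(\alpha(M/0)) = \iota(\alpha(M/0)) = \alpha(\iota(M/0)) = \alpha(M)$, the middle equality by equivariance of $\alpha$. For (d), compute $L^{\rho(\sigma(r))}_M = \pi^{-1}(\sigma(r)(M/L)) = \pi^{-1}(0^r_{M/L})$ with $\pi \colon M \onto M/L$; since $\pi$ is surjective with $\ker\pi = L$, the residual property of $r$ says exactly $L^r_M = (\ker\pi)^r_M = \pi^{-1}(0^r_{M/L})$, so $L^{\rho(\sigma(r))}_M = L^r_M$.

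All of the steps are routine diagram chases; the only point requiring care is the repeated use of the canonical identification $M/0 \cong M$ and the bookkeeping of preimages under a composite of a quotient map with an isomorphism, so the ``main obstacle'' is carrying out this bookkeeping consistently rather than any conceptual difficulty.
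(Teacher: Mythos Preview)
Your proof is correct and follows essentially the same route as the paper's: both verify that $\rho$ lands in residual operations, that $\sigma$ lands in submodule selectors, and that the two composites are the identity, with the residual check carried out by factoring a surjection as the canonical quotient followed by an isomorphism. The only cosmetic differences are that the paper writes the isomorphism in the other direction (its $j\colon P \to M/L$ is your $\bar q^{-1}$) and is terser about the identification $M/0 \cong M$.
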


\begin{proof}
First we show that if $\alpha$ is a \subsel\ on $\cM$, then $\rho(\alpha)$ is a \resop\ on $\cM$.  To see that it is an \extop, let $L \subseteq M$ be in $\cM$ and let $\pi: M \onto M/L$ be the natural surjection.  Then $L = \pi^{-1}(0) \subseteq \pi^{-1}(\alpha(M/L)) = L^{\rho(\alpha)}_M \subseteq \pi^{-1}(M/L) = M$.  Moreover, the isomorphism condition on \subsel s translates properly via $\rho$ to the isomorphism condition on \extop s.  To see that $\rho(\alpha)$ is residual, let $q: M \onto P$ be a surjective map in $\cM$.  Let $L = \ker q$.  Let $j: P \rightarrow M/L$ be the isomorphism such that $\pi := j \circ q: M \onto M/L$ is the natural surjection.  Then \begin{align*}
(\ker q)^{\rho(\alpha)}_M &= L^{\rho(\alpha)}_M = \pi^{-1}(\alpha(M/L)) \\
&= q^{-1}(j^{-1}(\alpha(M/L))) = q^{-1}(j^{-1}(\alpha(j(P))))\\
&= q^{-1}(j^{-1}(j(\alpha(P)))) =q^{-1}(\alpha(P)) = q^{-1}(0^{\rho(\alpha)}_P),
\end{align*}
where the first equality in the third line follows from the isomorphism condition on \subsel s.  Thus, $\rho(\alpha)$ is a \resop\ on $\cM$.

Conversely, if $r$ be a \resop\ (or any \extop) on $\cM$, then $\sigma(r)$ is clearly a \subsel\ on $\cM$.

Finally, we show that these operations are inverses of each other.  Accordingly, let $r$ be a \resop\ on $\cM$.  Let $L \subseteq M$ be a submodule inclusion in $\cM$, and $\pi: M \onto M/L$ the natural surjection.  Then \[
L^{\rho(\sigma(r))}_M = \pi^{-1}(\sigma(r)(M/L)) = \pi^{-1}(0^r_{M/L}) = L^r_M,
\]
where the last equality follows from the residual condition.  On the other hand, let $\alpha$ be a \subsel\ on $\cM$.  Then \[
\sigma(\rho(\alpha))(M) = 0^{\rho(\alpha)}_M = \alpha(M),
\]
since the natural surjection $M \onto M$ is just the identity map.
\end{proof}

For the next results, we will find it convenient to use the following general lemma about commutative squares.

\begin{lemma}\label{lem:square}
Suppose that the following represents a commutative diagram of sets and functions:
\[
\begin{CD}
M @>{\pi}>> P \\
@V{i}VV @V{j}VV \\
N @>{q}>> Q \\
\end{CD}
\]
Then for any subset $P' \subseteq P$, $i(\pi^{-1}(P')) \subseteq q^{-1}(j(P'))$.
\end{lemma}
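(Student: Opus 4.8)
The plan is to prove this by a direct element chase, using only the commutativity relation $j \circ \pi = q \circ i$ encoded by the square. There are no closure-theoretic subtleties here; the statement is a purely set-theoretic fact about how images and preimages interact across a commutative square, so the argument will be short and self-contained.

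Concretely, I would start with an arbitrary element $n \in i(\pi^{-1}(P'))$ and unwind the definition: by definition of image, $n = i(m)$ for some $m \in \pi^{-1}(P')$, and by definition of preimage, $\pi(m) \in P'$. The goal is to show $n \in q^{-1}(j(P'))$, i.e. that $q(n) \in j(P')$. Applying $q$ to $n = i(m)$ gives $q(n) = q(i(m))$, and here is the only place the hypothesis is used: commutativity of the square yields $q(i(m)) = j(\pi(m))$. Since $\pi(m) \in P'$, we get $j(\pi(m)) \in j(P')$, hence $q(n) \in j(P')$, which is exactly the assertion that $n \in q^{-1}(j(P'))$. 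As $n$ was arbitrary, this establishes the claimed inclusion $i(\pi^{-1}(P')) \subseteq q^{-1}(j(P'))$.

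I do not anticipate any real obstacle: the proof is a one-line diagram chase, and the only thing to be careful about is not accidentally claiming equality rather than inclusion (equality can fail, e.g. when $i$ is not surjective or $q$ is not injective, so the one-directional statement is the correct and sharp one). I would keep the write-up to two or three sentences and present it in the obvious forward direction, tracing a generic element from the left-hand set to the right-hand set.
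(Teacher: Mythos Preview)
Your proof is correct and is essentially the same argument as the paper's: the paper phrases it at the level of sets (computing $q(i(\pi^{-1}(P'))) = j(\pi(\pi^{-1}(P'))) \subseteq j(P')$ and then applying $q^{-1}$), while you unroll this into an element chase, but the content is identical.
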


\begin{proof}
We have $q(i(\pi^{-1}(P'))) = j(\pi(\pi^{-1}(P'))) \subseteq j(P')$. Applying $q^{-1},$ we get 
\[i(\pi^{-1}(P')) \subseteq q^{-1}(q(i(\pi^{-1}(P')))) \subseteq q^{-1}(j(P')).\]
\end{proof}

Next we show which properties of submodule selectors and residual operations are preserved by $\rho$ and $\sigma$. In Sections \ref{sec:ssdual} and \ref{sec:cidual} we will apply this result to our duality between closure operations and interior operations.

\begin{prop}\label{pr:ssres}
Let $\alpha$ be a \subsel\ on $\cM$ with $r=\rho(\alpha)$ under the bijection given in the previous proposition (so that $\alpha = \sigma(r)$).  Then: \begin{enumerate}%[(\alph*)]
\item\label{it:resop} $\alpha$ is order-preserving if and only if $r$ is \opamb,
\item\label{it:ressf} The following are equivalent: \begin{enumerate}
 \item\label{itt:rops} $r$ is \opsub,
 \item\label{itt:ssf} $\alpha$ is \surfuncss\,
 \item\label{itt:rsf} $r$ is \surfunceo,
 \end{enumerate}
\item\label{it:resrad} $\alpha$ is \radss\ if and only if $r$ is \idemeo, and
\item\label{it:residem} $\alpha$ is \idemss\ if and only if $r$ satisfies $L_{L_M^r}^r = L_M^r$ for all $(L,M) \in \cP$.
\end{enumerate}
\end{prop}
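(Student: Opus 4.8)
The plan is to prove each of the four equivalences by directly unwinding the definitions of $\rho$ and $\sigma$ together with the key identity $L^r_M = \pi^{-1}(\alpha(M/L))$, where $\pi: M \onto M/L$ is the natural surjection. The main technical tool will be Lemma~\ref{lem:square} applied to various commutative squares built from quotient maps; this will let us push submodules through preimages in the right direction.

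For \eqref{it:resop}, given $L \subseteq M \subseteq N$ with $(L,M),(L,N) \in \cP$, I would compare $L^r_M = \pi_M^{-1}(\alpha(M/L))$ and $L^r_N = \pi_N^{-1}(\alpha(N/L))$ by applying Lemma~\ref{lem:square} to the square with rows $M \onto M/L$, $N \onto N/L$ and vertical inclusions: order-preservation of $\alpha$ on the pair $(M/L, N/L)$ gives $\alpha(M/L) \subseteq \alpha(N/L)$, which pushes through to $L^r_M \subseteq L^r_N$. Conversely, if $r$ is \opamb, apply it with $L=0$: then $0^r_M = \alpha(M) \subseteq \alpha(N) = 0^r_N$ for $M \subseteq N$, which is exactly order-preservation of $\alpha$.

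For \eqref{it:ressf} I would set up a cycle of implications. The implication \eqref{itt:rsf}$\Rightarrow$\eqref{itt:ssf} is immediate by taking $L=0$ in the definition of \surfunceo. For \eqref{itt:ssf}$\Rightarrow$\eqref{itt:rops}: given $L \subseteq M \subseteq N$ in $\cM$ with $(L,N),(M,N) \in \cP$, consider the natural surjection $\bar\pi: N/L \onto N/M$; since $\alpha$ is \surfuncss, $\bar\pi(\alpha(N/L)) \subseteq \alpha(N/M)$, and pulling back along $N \onto N/L$ and $N \onto N/M$ (these fit into a commutative triangle/square) yields $L^r_N = \pi_L^{-1}(\alpha(N/L)) \subseteq \pi_M^{-1}(\alpha(N/M)) = M^r_N$. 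For \eqref{itt:rops}$\Rightarrow$\eqref{itt:rsf}: given $(L,M) \in \cP$ and a surjection $\pi: M \onto N$, I want $\pi(L^r_M) \subseteq \pi(L)^r_N$. Factor through $M/L$: the map $\pi$ induces $M/L \onto N/\pi(L)$, and by residuality $L^r_M = q^{-1}((\ker \text{(stuff)})^r_{\cdots})$; more directly, using that $r$ is residual one reduces to comparing closures of $0$ and of a submodule in a common ambient module, where \opsub\ applies. I expect the bookkeeping in this last implication to be the main obstacle — one has to be careful about which map is a surjection and apply the residual property of $r$ to replace $\pi(L^r_M)$ by a closure in $N$ and then invoke \opsub. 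The square Lemma~\ref{lem:square}, applied to the square $M \onto M/L$, $N \onto N/\pi(L)$ with verticals $\pi$ and the induced map, will do most of the work.

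For \eqref{it:resrad}: $\alpha$ is \radss\ means $\alpha(M/\alpha(M)) = 0$ for all $M$. Now $(L^r_M)^r_M = \pi^{-1}(\alpha(M/L))$ with $\pi: M \onto M/L^r_M$... more precisely, writing $K = L^r_M = \pi_L^{-1}(\alpha(M/L))$, idempotency $(K)^r_M = K$ says $\pi_K^{-1}(\alpha(M/K)) = K$, i.e.\ $\alpha(M/K) = 0$ in $M/K$. But $M/K = (M/L)\big/(K/L) = (M/L)\big/\alpha(M/L)$, so this is exactly the statement $\alpha\bigl((M/L)/\alpha(M/L)\bigr) = 0$. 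Letting $L$ range over all submodules and $M/L$ range over all of $\cM$ (every module in $\cM$ arises this way, e.g.\ with $L=0$), the two conditions are equivalent. For \eqref{it:residem}: $\alpha$ \idemss\ means $\alpha(\alpha(M)) = \alpha(M)$. Applying $\sigma \circ \rho = \mathrm{id}$ and the residual formula, $L^r_{L^r_M}$ is the closure of $L$ inside the module $L^r_M$; residuality gives $L^r_{L^r_M} = $ preimage under $L^r_M \onto L^r_M / L$ of $\alpha(L^r_M/L)$, and $L^r_M/L = \alpha(M/L)$, so $L^r_{L^r_M} = $ preimage of $\alpha(\alpha(M/L))$, which equals $L^r_M = $ preimage of $\alpha(M/L)$ precisely when $\alpha(\alpha(M/L)) = \alpha(M/L)$. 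Again letting $M/L$ range over $\cM$ gives the equivalence. In all four parts the recurring move is: a property "for all $(L,M)$" on the $r$ side translates, via $M/L$, to a property "for all $N \in \cM$" on the $\alpha$ side, and vice versa; I would state this reduction once and reuse it.
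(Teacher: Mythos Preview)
Your proposal is essentially correct and follows the same strategy as the paper: unwind $L^r_M = \pi^{-1}(\alpha(M/L))$ and translate properties back and forth, using that every $N\in\cM$ arises as $M/L$ (e.g.\ with $L=0$). Parts \eqref{it:resop}, \eqref{it:resrad}, \eqref{it:residem} match the paper's arguments almost verbatim.

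For part \eqref{it:ressf} you run the cycle in the opposite order from the paper: you do \eqref{itt:rsf}$\Rightarrow$\eqref{itt:ssf}$\Rightarrow$\eqref{itt:rops}$\Rightarrow$\eqref{itt:rsf}, whereas the paper does \eqref{itt:rops}$\Rightarrow$\eqref{itt:ssf}$\Rightarrow$\eqref{itt:rsf}$\Rightarrow$\eqref{itt:rops}. Both cycles are valid, and your first two implications are clean. However, your sketch of \eqref{itt:rops}$\Rightarrow$\eqref{itt:rsf} mixes two different arguments. The sentence ``using that $r$ is residual one reduces to comparing closures \dots\ where \opsub\ applies'' is the correct idea: with $K=\ker\pi$, residuality (applied to the composite $M\onto N\onto N/\pi(L)\cong M/(L+K)$) gives $\pi^{-1}\bigl(\pi(L)^r_N\bigr)=(L+K)^r_M$, and then \opsub\ on $L\subseteq L+K\subseteq M$ yields $L^r_M\subseteq (L+K)^r_M$, hence $\pi(L^r_M)\subseteq\pi(L)^r_N$. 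But your final sentence, invoking Lemma~\ref{lem:square} on the square with rows $M\onto M/L$, $N\onto N/\pi(L)$, is \emph{not} a proof of \eqref{itt:rops}$\Rightarrow$\eqref{itt:rsf}: that lemma gives $\pi(L^r_M)\subseteq q^{-1}\bigl(\bar\pi(\alpha(M/L))\bigr)$, and to land in $\pi(L)^r_N=q^{-1}(\alpha(N/\pi(L)))$ you would need $\bar\pi(\alpha(M/L))\subseteq\alpha(N/\pi(L))$, which is precisely condition \eqref{itt:ssf}, not \eqref{itt:rops}. In fact that square-lemma argument is exactly how the paper proves \eqref{itt:ssf}$\Rightarrow$\eqref{itt:rsf}. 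So either drop the Lemma~\ref{lem:square} appeal from your \eqref{itt:rops}$\Rightarrow$\eqref{itt:rsf} step and carry out the residuality-plus-\opsub\ argument above, or switch to the paper's cycle order.
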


\begin{proof}[Proof of (\ref{it:resop})]
Suppose $\alpha$ is order-preserving.  Let $L \subseteq M \subseteq N$ be submodule inclusions in $\cM$.  Let $p: N \onto N/L$ be the canonical surjection, and $\pi$ its restriction to $M \to M/L$.  Then $\alpha(M/L) \subseteq \alpha(N/L)$ by assumption, so we have \[
L^r_M = \pi^{-1}(\alpha(M/L)) = p^{-1}(\alpha(M/L)) \subseteq p^{-1}(\alpha(N/L)) = L^r_N.
\]
Conversely, suppose $r$ is \opamb.  Let $L \subseteq M$ be a submodule inclusion in $\cM$.  Then $\alpha(L) = 0^r_L \subseteq 0^r_M = \alpha(M)$.
\end{proof}
\begin{proof}[Proof of (\ref{it:ressf})]
(\ref{itt:rops}) $\implies$ (\ref{itt:ssf}): Suppose $r$ is \opsub.  Let $\pi: M \onto P$ be a surjection in $\cM$.  Without loss of generality, $P = M/L$ where $L=\ker(\pi)$ and $\pi$ is canonical.  By assumption, we have $0^r_M \subseteq L^r_M$.  Hence, $\pi(\alpha(M)) = \pi(0^r_M) \subseteq \pi(L^r_M) = \pi(\pi^{-1}(\alpha(M/L))) = \alpha(M/L) = \alpha(P)$.

(\ref{itt:ssf}) $\implies$ (\ref{itt:rsf}): Suppose $\alpha$ is \surfuncss.  Let $\pi:M \onto P$ be a surjection in $\cM$.  Let $(L,M) \in \cP$.  Then consider the following commutative diagram: \[
\begin{CD}
M @>{\pi}>> P \\
@V{p}VV @V{q}VV \\
M/L @>{\overline{\pi}}>> P/\pi(L) \\
\end{CD}
\]
Using Lemma~\ref{lem:square}, we have: $\pi(L^r_M) = \pi(p^{-1}(\alpha(M/L))) \subseteq q^{-1}(\overline{\pi}(\alpha(M/L))) \subseteq q^{-1}(\alpha(P/\pi(L))) = \pi(L)^r_P$.

(\ref{itt:rsf}) $\implies$ (\ref{itt:rops}): Suppose $r$ is \surfunceo.  Let $L \subseteq M \subseteq N$ be submodule inclusions with $(L,N), (M,N) \in \cP$.  Consider the natural map $q: N \onto N/M$.  Then \[
\frac{L^r_N + M}{M} = q(L^r_N) \subseteq q(L)^r_{N/M} = 0^r_{N/M} = q( M_N^r )=\frac{M^r_N}{M}.
\]
Hence, $L^r_N \subseteq M^r_N$.

\end{proof}

\begin{proof}[Proof of (\ref{it:resrad})]
Suppose $\alpha$ is \radss.  Let $(L,M) \in \cP$, and let $q$ denote the natural surjection $M \to M/L_M^r$.  We have \[
\begin{aligned}
0 &= \alpha\left(\frac{M/L}{\alpha(M/L)}\right) = \alpha\left(\frac{M/L}{0_{M/L}^r}\right)=\alpha\left(\frac{M/L}{\pi(L_M^r)}\right)=\alpha\left(\frac{M/L}{L_M^r/L}\right)=\alpha\left(\frac{M}{L_M^r}\right) \\
&= q(q^{-1}(\alpha(M/L_M^r))) = q((L_M^r)_M^r) =  \frac{(L^r_M)^r_M}{L^r_M}.
\end{aligned}
\]
Hence $L^r_M = (L^r_M)^r_M$, so that $r$ is \idemeo.

Conversely suppose $r$ is \idemeo.  Let $M \in \cM$, and let $\pi:M \to M/0_M^r$ be the natural surjection.  Then $0^r_M = (0^r_M)^r_M$, so \[\alpha(M/\alpha(M)) = \alpha(M/0^r_M) = \pi(\pi^{-1}(\alpha(M/0_M^r)))=\pi((0_M^r)_M^r)=
(0^r_M)^r_M / 0^r_M = 0.\]  Thus $\alpha$ is \radss.
\end{proof}

\begin{proof}[Proof of (\ref{it:residem})]
We have $\alpha(\alpha(M))=\alpha(0_M^r)=0_{0_M^r}^r$,

Suppose $\alpha$ is idempotent. We will show that $L_{L_M^r}^r=L_M^r$ for all $(L,M) \in \cP$. Since $\alpha$ is idempotent, $0_{0_M^r}^r=0_M^r$ for all $M \in \cM$. Let $(L,M) \in \cP$. Then $L_M^r=\pi^{-1}(0_{M/L}^r)$, where $\pi:M \to M/L$ is the quotient map. So if $p:L^r_M \to L^r_M/L$ is the restriction of $\pi$ to $L_M^r$, we have
\[L_{L_M^r}^r=p^{-1}(0^r_{L^r_M/L}).\]
But $L_M^r=\pi^{-1}(0_{M/L}^r)$. So $L_M^r/L=\pi(L_M^r)=\pi(\pi^{-1}(0_{M/L}^r))=0_{M/L}^r$, since $\pi$ is a surjection.
Hence 
\[L_{L_M^r}^r=p^{-1}(0^r_{L^r_M/L})=p^{-1}(0^r_{0_{M/L}^r})=p^{-1}(0_{M/L}^r). \]
Since $p$ is the restriction of $\pi$ to $L_M^r$, this is $\pi^{-1}(0_{M/L}^r) \cap L_M^r=L_M^r \cap L_M^r=L_M^r$, as desired.

Now suppose that $L_{L_M^r}^r=L_M^r$ for all $L \subseteq M$. In particular, we have $0_{0_M^r}^r=0_M^r$ for each $M \in \cM$.  This implies that $\alpha$ is idempotent.
\end{proof}

\section{Duality between submodule selectors}\label{sec:ssdual}
In this section, we define a duality operation $\dual$ between submodule selectors and show which properties from Section \ref{sec:first} correspond under this duality. We then give a useful alternate characterization of the duality.

Throughout this section, $R$ is a complete Noetherian local ring with maximal ideal $\m$, residue field $k$, and $E := E_R(k)$ the injective hull. We will use $^\vee$ to denote the Matlis duality operation. $\cM$ is a category of $R$-modules closed under  taking submodule and quotient modules, and such that for all $M \in \cM$, $M^{\vee\vee} \cong M$. So for example $\cM$ could be the category of \fg\ $R$-modules, or of Artinian $R$-modules.

\begin{defn}
Let $S(\cM)$ denote the set of all submodule selectors on $\cM$.  Define $\dual: S(\cM) \rightarrow S(\cM^\vee)$ as follows: For $\alpha \in S(\cM)$ and $M\in \cM^\vee$, \[
\alpha^\dual(M) := (M^\vee / \alpha(M^\vee))^\vee,
\]
considered as a submodule of $M$ in the usual way.  
\end{defn}

It is clear that the isomorphism-preserving property of $\alpha$ will be inherited by $\alpha^\dual$.  Moreover, we have the following.

\begin{thm}\label{thm:smsdual}
Let $\alpha$ be a submodule selector on $\cM$.  Then: \begin{enumerate}
 \item\label{it:duality} $(\alpha^\dual)^\dual = \alpha$,
 \item\label{it:funcdual} $\alpha$ is \surfuncss\ if and only if $\alpha^\dual$ is order-preserving, and
 \item\label{it:idemraddual} $\alpha$ is \idemss\ if and only if $\alpha^\dual$ is \radss.
\end{enumerate}
\end{thm}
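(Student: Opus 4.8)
The plan is to verify the three statements in turn, using the basic fact that Matlis duality $^\vee$ is an exact contravariant functor, so it converts short exact sequences to short exact sequences, turns surjections into injections and injections into surjections, reverses inclusions of submodules, and satisfies $M^{\vee\vee}\cong M$ naturally on $\cM$.

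First I would prove (\ref{it:duality}). Fix $M \in \cM$; I want to compute $(\alpha^\dual)^\dual(M) = \big((\alpha^\dual(M^\vee))^\vee\big)$ unwound one more level, i.e. $\big((M^{\vee\vee}/\alpha^\dual(M^\vee))^\vee\big)$, and compare it to $\alpha(M)$. The key is that for a submodule $L \subseteq M^{\vee\vee}$, applying $^\vee$ to the short exact sequence $0 \to L \to M^{\vee\vee} \to M^{\vee\vee}/L \to 0$ and using $M^{\vee\vee}\cong M$ identifies $(M^{\vee\vee}/L)^\vee$ with a submodule of $M^\vee$ whose own double-dual recovers $L$ as a submodule of $M^{\vee\vee}\cong M$; in other words, the assignment $L \mapsto (M/L)^\vee$ is an inclusion-reversing involution between submodules of $M$ and submodules of $M^\vee$ (this is the standard correspondence of submodules under Matlis duality). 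Writing $N := M^\vee$, by definition $\alpha^\dual(N) = (N^\vee/\alpha(N^\vee))^\vee = (M^{\vee\vee}/\alpha(M^{\vee\vee}))^\vee$, and applying the same operation again, $(\alpha^\dual)^\dual(M) = (M^\vee/\alpha^\dual(M^\vee))^\vee = (N/\alpha^\dual(N))^\vee$. Substituting and using that $L\mapsto (M/L)^\vee$ is an involution with $L = \alpha(M^{\vee\vee})$ identified with $\alpha(M)$ under $M^{\vee\vee}\cong M$ (here one must check that the submodule selector $\alpha$ is compatible with the canonical isomorphism $M^{\vee\vee}\cong M$, which follows from the isomorphism-invariance axiom for submodule selectors), we get $(\alpha^\dual)^\dual(M) = \alpha(M)$.

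Next, for (\ref{it:funcdual}) I would argue both directions using the same correspondence. Suppose $\alpha$ is \surfuncss. Let $(L,M)\in \cP_{\cM^\vee}$, so $L \subseteq M$; I want $\alpha^\dual(L)\subseteq \alpha^\dual(M)$. Dualizing the inclusion $L \hookrightarrow M$ gives a surjection $\pi: M^\vee \onto L^\vee$, and \surfuncss ness of $\alpha$ gives $\pi(\alpha(M^\vee)) \subseteq \alpha(L^\vee)$. Now $\alpha^\dual(M) = (M^\vee/\alpha(M^\vee))^\vee$ and $\alpha^\dual(L) = (L^\vee/\alpha(L^\vee))^\vee$; the map $\pi$ induces a surjection $M^\vee/\alpha(M^\vee) \onto L^\vee/\alpha(L^\vee)$ precisely because $\pi(\alpha(M^\vee))\subseteq \alpha(L^\vee)$, and dualizing this surjection yields an injection $(L^\vee/\alpha(L^\vee))^\vee \hookrightarrow (M^\vee/\alpha(M^\vee))^\vee$ which, chased through the identification of these double-duals as submodules of $L$ and $M$ respectively, is exactly the inclusion $\alpha^\dual(L)\subseteq \alpha^\dual(M)$. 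The converse follows by applying this implication to $\alpha^\dual$ in place of $\alpha$ and invoking part (\ref{it:duality}): if $\alpha^\dual$ is order-preserving then $(\alpha^\dual)^\dual = \alpha$ is \surfuncss. (To make the converse come out clean, I would actually prove the biconditional "$\alpha$ \surfuncss\ $\Rightarrow$ $\alpha^\dual$ order-preserving" and "$\beta$ order-preserving $\Rightarrow$ $\beta^\dual$ \surfuncss" as the two halves, then combine with (\ref{it:duality}).)

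Finally, for (\ref{it:idemraddual}) I would translate both conditions through duality. The \idemss\ condition $\alpha(\alpha(M))=\alpha(M)$ for all $M \in \cM$ I would rephrase using the submodule inclusion $\alpha(M)\subseteq M$: dualize the short exact sequence $0 \to \alpha(M) \to M \to M/\alpha(M) \to 0$. For the \radss\ condition on $\alpha^\dual$, $\alpha^\dual(N/\alpha^\dual(N)) = 0$, I would write $N = M^\vee$ and expand $\alpha^\dual(N) = (M^{\vee\vee}/\alpha(M^{\vee\vee}))^\vee \cong (M/\alpha(M))^\vee$ as a submodule of $N = M^\vee$; then $N/\alpha^\dual(N) \cong M^\vee/(M/\alpha(M))^\vee$, and by the submodule correspondence this quotient is the dual of the submodule $\alpha(M)\subseteq M$, i.e. $N/\alpha^\dual(N)\cong \alpha(M)^\vee$. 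So $\alpha^\dual(N/\alpha^\dual(N)) = \alpha^\dual(\alpha(M)^\vee) = \big((\alpha(M)^{\vee\vee}/\alpha(\alpha(M)^{\vee\vee}))\big)^\vee \cong (\alpha(M)/\alpha(\alpha(M)))^\vee$, which vanishes if and only if $\alpha(\alpha(M)) = \alpha(M)$. Running this equivalence over all $M \in \cM$ (equivalently all $N\in\cM^\vee$) gives the claim.

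\medskip

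The main obstacle I anticipate is purely bookkeeping: keeping straight the several nested applications of $^\vee$ and the canonical identification $M^{\vee\vee}\cong M$, and checking carefully that $\alpha$ respects this canonical identification (so that "$\alpha(M^{\vee\vee})$, viewed in $M$" really is $\alpha(M)$) — this is where the isomorphism-invariance axiom for submodule selectors gets used, and it is easy to gloss over. The one genuinely substantive input, which I would isolate as the workhorse, is the standard fact that $L \mapsto (M/L)^\vee$ and $K \mapsto (M^\vee/K)^\vee$ are mutually inverse inclusion-reversing bijections between the submodules of $M$ and the submodules of $M^\vee$; everything else is exactness of $^\vee$ applied to the three-term sequences above.
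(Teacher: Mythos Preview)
Your proposal is essentially correct and follows the same strategy as the paper: exploit the exactness of $(-)^\vee$ and the inclusion-reversing correspondence $L \mapsto (M/L)^\vee$ between submodules of $M$ and submodules of $M^\vee$.

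One point to flag in part~(\ref{it:funcdual}): your first attempt at the converse, ``applying this implication to $\alpha^\dual$ in place of $\alpha$ and invoking part~(\ref{it:duality}),'' does not work as stated. The forward implication you proved is ``$\alpha$ \surfuncss\ $\Rightarrow$ $\alpha^\dual$ order-preserving''; substituting $\alpha^\dual$ for $\alpha$ yields ``$\alpha^\dual$ \surfuncss\ $\Rightarrow$ $\alpha$ order-preserving,'' which is the wrong statement. You recognize this in the parenthetical and correctly say you would separately prove ``$\beta$ order-preserving $\Rightarrow$ $\beta^\dual$ \surfuncss,'' but you do not sketch that argument. The paper handles this direction directly: given a surjection $\pi\colon M \twoheadrightarrow M/L$, the inclusion $(M/L)^\vee \subseteq M^\vee$ together with order-preservation of $\beta=\alpha^\dual$ gives $\bigl(M/\pi^{-1}(\alpha(M/L))\bigr)^\vee \cong \beta((M/L)^\vee) \subseteq \beta(M^\vee) = (M/\alpha(M))^\vee$, whence $\alpha(M) \subseteq \pi^{-1}(\alpha(M/L))$. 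This is the genuine dual computation to your forward direction and should be written out; it is not a formal consequence of the forward direction plus~(\ref{it:duality}).

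Your treatment of part~(\ref{it:idemraddual}) is slightly slicker than the paper's: you compute $\alpha^\dual(N/\alpha^\dual(N)) \cong (\alpha(M)/\alpha(\alpha(M)))^\vee$ once and read off both directions simultaneously, whereas the paper argues each implication separately. Both are fine.
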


\begin{proof}
First we show that $\alpha^{\dual\dual} = \alpha$.  For this, we have \[
(\alpha^\dual)^\dual(M) = (M^\vee / \alpha^\dual(M^\vee))^\vee = (M^\vee / (M/\alpha(M))^\vee)^\vee = (\alpha(M)^\vee)^\vee = \alpha(M).
\]

Next, suppose $\alpha$ is \surfuncss.  Let $L \subseteq M$ be a submodule inclusion in $\cM^\vee$, and let $\pi: M^\vee \onto L^\vee$ be the associated surjection in $\cM$.  Since $\pi(\alpha(M^\vee)) \subseteq \alpha(L^\vee)$ by assumption on $\alpha$, we have the induced surjections \[
\frac{M^\vee}{\alpha(M^\vee)} \onto \frac{\pi(M^\vee)}{\pi(\alpha(M^\vee))} = \frac{L^\vee}{\pi(\alpha(M^\vee))} \onto \frac{L^\vee}{\alpha(L^\vee)}.
\]
Dualizing the composite then induces an injective map $\alpha^\dual(L) \hookrightarrow \alpha^\dual(M)$, and since everything was natural, it follows that $\alpha^\dual(L) \subseteq \alpha^\dual(M)$.

For the converse, suppose $\beta = \alpha^\dual$ is order-preserving.  Let $\pi: M \onto P$ be a surjection in $\cM$.  Without loss of generality, $P=M/L$ and $\pi$ is the canonical surjection, where $L$ is a submodule of $M$.  Taking Matlis duals, we have that $(M/L)^\vee$ is a submodule of $M^\vee$.  Then \begin{align*}
\left(\frac{M}{\pi^{-1}(\alpha(M/L))}\right)^\vee &\cong \left(\frac{M/L}{\alpha(M/L)}\right)^\vee = \alpha^\dual((M/L)^\vee) = \beta((M/L)^\vee) \\
&\subseteq \beta(M^\vee)= \alpha^\dual(M^\vee) = \left(\frac{M}{\alpha(M)}\right)^\vee,
\end{align*}
where the containment in the second line above arises from the order-preservation property of $\beta$.  By naturality of the isomorphism above, it follows that the dual surjection $M/\alpha(M) \onto M/\pi^{-1}(\alpha(M/L))$ is also canonical, so that $\alpha(M) \subseteq \pi^{-1}(\alpha(M/L))$.  Applying $\pi$ to both sides, it follows that $\pi(\alpha(M)) \subseteq \pi(\pi^{-1}(\alpha(M/L))) = \alpha(M/L)$.  Thus, $\alpha$ is \surfuncss.

For (\ref{it:idemraddual}), first suppose $\alpha$ is \idemss\ and let $M \in \cM^\vee$.  Then \[
\alpha^\dual(M/\alpha^\dual(M)) = \left(\frac{\alpha(M^\vee)}{\alpha(\alpha(M^\vee))}\right)^\vee = 0^\vee = 0.
\]

Finally, suppose $\beta=\alpha^\dual$ is \radss.  Let $M \in \cM$.  Then \begin{align*}
\alpha(\alpha(M)) &= \beta^\dual(\beta^\dual(M)) = \beta^\dual\left(\left(\frac{M^\vee}{\beta(M^\vee)}\right)^\vee\right)\\
&= \left(\frac{M^\vee / \beta(M^\vee)}{\beta(M^\vee / \beta(M^\vee))}\right)^\vee
= (M^\vee / \beta(M^\vee))^\vee \\&= \beta^\dual(M) = \alpha(M)
\end{align*}
where the second equality of the second line comes from the fact that $\beta$ is \radss.
\end{proof}

Given a submodule selector $\alpha$, we give an alternate description of $\alpha^\dual$ that will be useful later on.

\begin{thm}
\label{alternatedualdescription}
Let $\alpha$ be a submodule selector and $M$ an $R$-module. Then 
\[\alpha^\dual(M)=\{z \in M : g(z)=0 \text{ for all } g \in \alpha(M^\vee)\}
\]
\end{thm}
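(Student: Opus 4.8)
The plan is to make the phrase ``considered as a submodule of $M$ in the usual way'' in the definition of $\alpha^\dual$ completely explicit, and then read off the description. Write $N := \alpha(M^\vee) \subseteq M^\vee$. Applying the exact contravariant functor $(-)^\vee = \Hom_R(-,E)$ to the short exact sequence $0 \to N \to M^\vee \to M^\vee/N \to 0$ produces a short exact sequence
\[0 \to (M^\vee/N)^\vee \to M^{\vee\vee} \to N^\vee \to 0,\]
in which the first map is the canonical injection (dual to the surjection $M^\vee \onto M^\vee/N$) and the second map is restriction of functionals along the inclusion $N \into M^\vee$. Composing the first map with the natural isomorphism $M \xrightarrow{\sim} M^{\vee\vee}$, $z \mapsto \eval_z$ (where $\eval_z(g) := g(z)$ for $g \in M^\vee$), realizes $\alpha^\dual(M) = (M^\vee/N)^\vee$ as a submodule of $M$; concretely, it is identified with the kernel of the composite $M \xrightarrow{\sim} M^{\vee\vee} \onto N^\vee$.

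Next I would identify that kernel. An element $z \in M$ lies in it exactly when the restriction of $\eval_z$ to $N = \alpha(M^\vee)$ is the zero functional, i.e.\ when $\eval_z(g) = g(z) = 0$ for every $g \in \alpha(M^\vee)$. This is precisely the set $\{z \in M : g(z) = 0 \text{ for all } g \in \alpha(M^\vee)\}$, which gives the claimed formula.

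The only point that genuinely needs care is checking that the submodule structure obtained this way --- through the canonical injection $(M^\vee/N)^\vee \into M^{\vee\vee}$ followed by the canonical isomorphism $M^{\vee\vee} \cong M$ --- agrees with the ``usual'' identification of $\alpha^\dual(M)$ as a submodule of $M$ that is implicit in the definition. This is a matter of tracing through the naturality of Matlis duality (the evaluation isomorphism $M \cong M^{\vee\vee}$ is natural, and the connecting maps above are the canonical ones), and that same naturality is what makes the right-hand side independent of any choices. Beyond this bookkeeping I do not anticipate any real obstacle.
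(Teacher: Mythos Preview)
Your proof is correct and follows essentially the same approach as the paper: dualize the short exact sequence $0 \to \alpha(M^\vee) \to M^\vee \to M^\vee/\alpha(M^\vee) \to 0$, identify the map $M^{\vee\vee} \to \alpha(M^\vee)^\vee$ as restriction, and then use the evaluation isomorphism $M \cong M^{\vee\vee}$ to read off $\alpha^\dual(M)$ as the set of $z$ with $g(z)=0$ for all $g \in \alpha(M^\vee)$. The paper carries out exactly these steps, so there is nothing to add.
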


\begin{proof}
We have an exact sequence
\[0 \to \alpha(M^\vee) \to M^\vee \to M^\vee/\alpha(M^\vee) \to 0,\]
whose Matlis dual is
\[0 \leftarrow \alpha(M^\vee)^\vee \xleftarrow{q} M^{\vee\vee} \leftarrow \alpha^\dual(M) \leftarrow 0.\]
Note that $q$ is the restriction map.

Set $i$ to be the standard isomorphism $M \cong M^{\vee\vee}$, and let $\mu:M \to \alpha(M^\vee)^\vee$ be $q \circ i$. This implies that
\[\alpha^\dual(M)=\left(M^\vee/\alpha(M^\vee)\right)^\vee = \ker(M \xrightarrow{\mu} (\alpha(M^\vee))^\vee) \subseteq M.\]

Note that for $z \in M$ and $g:M \to E$, $i(z)(g)=g(z)$. So 
\[(q \circ i)(z)=i(z) \mid_{\alpha(M^\vee)},\]
where $i(z):M^\vee \to E$.

An element $z \in \ker(\mu)=\ker(q \circ i)$ if and only if $i(z)\mid_{\alpha(M^\vee)}=0$, if and only if $\alpha(M^\vee) \subseteq \ker(i(z))$, if and only if for every $g \in \alpha(M^\vee)$, $g(z)=0$, if and only if $z \in \bigcap_{g \in \alpha(M^\vee)} \ker(g)$.
\end{proof}

\section{Closure-interior duality}\label{sec:cidual}
Using the results of Section \ref{sec:ssdual}, we achieve our original goal of a duality between residual closure operations and interior operations such as that between tight closure and tight interior \cite{nmeSc-tint}, and we indicate the properties that correspond via this duality.

In this section, $R$ is a complete local Noetherian ring.

\begin{defn}
If $r$ be a residual operation on $\cP$ (resp. $\cP^\vee$), set $i(r) := \sigma(r)^\dual$.  If $j$ is a submodule selector on $\cM$ (resp. $\cM^\vee$), set $c(j):= \rho(j^\dual)$. Here ${}^\dual$ is as in \S\ref{sec:ssdual} and $\rho, \sigma$ are as in Construction~\ref{cons:res}.
\end{defn}

In the special cases where $r$ is restricted   to be a \emph{closure} operation, or $j$ an \emph{interior} operation, the above gives the duality we desire, as outlined in Lemma~\ref{lem:cidual} and Theorem~\ref{thm:clintdual} below.

\begin{lemma}\label{lem:cidual}
We have that $c$ and $i$ are inverses of one another.  Hence, there is a one-to-one correspondence between residual operations on $\cP$ and submodule selectors on $\cM^\vee$.
\end{lemma}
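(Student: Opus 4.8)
The plan is to chase the definitions and invoke the two bijections already established: Proposition~\ref{pr:resid}, which says $\sigma$ and $\rho$ are mutually inverse bijections between residual operations and submodule selectors, and Theorem~\ref{thm:smsdual}(\ref{it:duality}), which says $\dual$ is an involution on submodule selectors. Since $c$ and $i$ are defined as composites of these, each should be the inverse of the other essentially by formal manipulation, provided one is careful about which category ($\cM$ versus $\cM^\vee$, equivalently $\cP$ versus $\cP^\vee$) each arrow lives over.

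Concretely, I would first verify that the composites make sense: if $r$ is a residual operation on $\cP$, then $\sigma(r)$ is a submodule selector on $\cM$, so $\sigma(r)^\dual = i(r)$ is a submodule selector on $\cM^\vee$; then $c(i(r)) = \rho\bigl(i(r)^\dual\bigr) = \rho\bigl((\sigma(r)^\dual)^\dual\bigr)$. By Theorem~\ref{thm:smsdual}(\ref{it:duality}), $(\sigma(r)^\dual)^\dual = \sigma(r)$, so $c(i(r)) = \rho(\sigma(r)) = r$, where the last equality is the inverse relation from Proposition~\ref{pr:resid}. Symmetrically, starting with a submodule selector $j$ on $\cM^\vee$, one has $c(j) = \rho(j^\dual)$ a residual operation on $\cP$ (note $j^\dual$ is a submodule selector on $\cM^{\vee\vee} = \cM$, using that $\cM$ is closed under double Matlis dual), hence $i(c(j)) = \sigma(\rho(j^\dual))^\dual = (j^\dual)^\dual = j$, again by Proposition~\ref{pr:resid} followed by Theorem~\ref{thm:smsdual}(\ref{it:duality}). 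Thus $c \circ i = \mathrm{id}$ and $i \circ c = \mathrm{id}$, and the stated one-to-one correspondence follows immediately.

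The one point requiring genuine care — the ``main obstacle'', though it is bookkeeping rather than a real difficulty — is keeping straight that $\dual$ goes from $S(\cM)$ to $S(\cM^\vee)$, so the domains and codomains of $\rho$, $\sigma$, and $\dual$ match up correctly when composed: $\rho$ and $\sigma$ must be applied over $\cM^\vee$ in one of the two composites and over $\cM$ in the other, and one must use $\cM^{\vee\vee} = \cM$ (guaranteed by the standing hypothesis of Section~\ref{sec:ssdual}) to see that $j^\dual$ is again an object over $\cM$ whenever $j$ is over $\cM^\vee$. Once this is tracked, each composite telescopes in one line. The final sentence of the statement is then just the observation that a pair of mutually inverse maps between two sets is a bijection.
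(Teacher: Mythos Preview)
Your proposal is correct and follows exactly the same approach as the paper: the paper's proof is the one-liner ``This follows from Proposition~\ref{pr:resid} and Theorem~\ref{thm:smsdual}(\ref{it:duality}),'' and your argument simply unpacks that reference by explicitly composing $\rho$, $\sigma$, and $\dual$ and watching them cancel. Your additional bookkeeping about $\cM$ versus $\cM^\vee$ is accurate and is the only content beyond what the paper writes.
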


\begin{proof}
This follows from Proposition~\ref{pr:resid} and Theorem~\ref{thm:smsdual}(\ref{it:duality}).
\end{proof}

\begin{thm}\label{thm:clintdual}
Let $r$ be a residual operation on $\cP$, and $\int := i(r)$.  Then: \begin{enumerate}
 \item $r$ is \opsub\ $\iff$ $\int$ is order-preserving.
 \item $r$ is \idemeo\ $\iff$ $\int$ is \idemss.
 \item $r$ is a closure operation $\iff$ $\int$ is an interior operation.
 \item Suppose $r$ is a closure operation (i.e. $\int$ is an interior operation). The following are equivalent: \begin{enumerate}
   \item $r$ is functorial.
	\item $r$ is \opamb.
	\item $\int$ is \surfuncss.
	\item $\int$ is functorial.
   \end{enumerate}
 \end{enumerate}
\end{thm}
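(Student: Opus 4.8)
The plan is to deduce every assertion from two translation dictionaries that are already in place: Proposition~\ref{pr:ssres}, which relates properties of a \resop\ $r$ to properties of the \subsel\ $\alpha:=\sigma(r)$ (by Proposition~\ref{pr:resid}, residuality of $r$ is precisely what makes $\sigma$ and $\rho$ mutually inverse, so $r=\rho(\alpha)$), and Theorem~\ref{thm:smsdual}, which relates properties of a submodule selector to properties of its smile dual. Since by definition $\int=i(r)=\sigma(r)^\dual=\alpha^\dual$, each equivalence in the theorem will be the composite of one step from each dictionary. The only thing to watch is which category, $\cM$ or $\cM^\vee$, a given selector lives on, since Theorem~\ref{thm:smsdual}'s equivalences will sometimes be applied to $\alpha^\dual$ in place of $\alpha$, the collapse $(\alpha^\dual)^\dual=\alpha$ of part~(\ref{it:duality}) making this legitimate.

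Concretely, for~(1) I would compose ``$r$ is \opsub\ $\iff$ $\alpha$ is \surfuncss'' (Proposition~\ref{pr:ssres}(\ref{it:ressf})) with ``$\alpha$ is \surfuncss\ $\iff$ $\alpha^\dual$ is order-preserving'' (Theorem~\ref{thm:smsdual}(\ref{it:funcdual})). For~(2) I would compose ``$r$ is \idemeo\ $\iff$ $\alpha$ is \radss'' (Proposition~\ref{pr:ssres}(\ref{it:resrad})) with ``$\alpha$ is \radss\ $\iff$ $\alpha^\dual$ is \idemss'', the latter being Theorem~\ref{thm:smsdual}(\ref{it:idemraddual}) applied to the selector $\alpha^\dual$ together with involutivity. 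Part~(3) is then immediate by unwinding the definitions: $r$ is a closure operation iff $r$ is \opsub\ and \idemeo, which by (1) and (2) holds iff $\int$ is order-preserving and \idemss, i.e.\ an interior operation.

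For~(4) I would first isolate a formal fact with no reference to duality: an \extop\ $e$ is functorial if and only if it is both \surfunceo\ and \opamb. Indeed, any $g\colon M\to N$ in $\cM$ factors as $M\onto g(M)\into N$, so applying the \surfunceo\ property to the first map and then order-preservation on ambient modules to the second yields functoriality; conversely, specializing $g$ to surjections recovers \surfunceo\ and to inclusions recovers \opamb. Dually, a \subsel\ is functorial precisely when it is order-preserving and \surfuncss, which is just the definition. Now assume $r$ is a closure operation. Then $r$ is \opsub, hence also \surfunceo\ by Proposition~\ref{pr:ssres}(\ref{it:ressf}), so the first formal fact gives (a)~$\iff$~(b); and $\int$, being an interior operation, is order-preserving, so the second gives (c)~$\iff$~(d). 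It remains to establish (b)~$\iff$~(c), which is ``$r$ is \opamb\ $\iff$ $\alpha$ is order-preserving'' (Proposition~\ref{pr:ssres}(\ref{it:resop})) composed with ``$\alpha$ is order-preserving $\iff$ $\alpha^\dual$ is \surfuncss'', the latter once more obtained from Theorem~\ref{thm:smsdual}(\ref{it:funcdual}) applied to $\alpha^\dual$ and involutivity. Chaining (a)~$\iff$~(b)~$\iff$~(c)~$\iff$~(d) finishes~(4).

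I do not expect a genuine obstacle: the argument is entirely bookkeeping layered on the work already done in Sections~\ref{sec:first} and~\ref{sec:ssdual}. The two points needing a little care are (i) the repeated ``reverse'' use of Theorem~\ref{thm:smsdual}(\ref{it:funcdual}) and~(\ref{it:idemraddual}), where one substitutes $\alpha^\dual$ for the ambient selector and collapses $(\alpha^\dual)^\dual$ back to $\alpha$ (which relies on $\cM^\vee$ being a category of the same type, so that $(\cM^\vee)^\vee=\cM$); and (ii) in the formal fact, checking that the pairs coming from the factorization $M\onto g(M)\into N$ --- namely $(\pi(L),g(M))$ and $(g(L),N)$ --- indeed lie in $\cP$, which is automatic since $\cM$ is closed under submodules and quotients.
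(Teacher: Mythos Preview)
Your proposal is correct and follows essentially the same route as the paper: both compose Proposition~\ref{pr:ssres} with Theorem~\ref{thm:smsdual} to obtain each equivalence, and in part~(4) both use that under the closure/interior hypothesis the ``extra'' half of functoriality (surjection-functoriality for $r$, order-preservation for $\int$) is already secured. The only difference is that you spell out explicitly the factorization argument showing that an \extop\ is functorial iff it is \surfunceo\ and \opamb, whereas the paper leaves this implicit in its citation of Proposition~\ref{pr:ssres}(\ref{it:ressf}).
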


\begin{proof}
This follows from Proposition~\ref{pr:ssres} and Theorem~\ref{thm:smsdual}.  Here are the details:

(1): $r$ is \opsub $\iff \sigma(r)$ is \surfuncss\ (by Proposition~\ref{pr:ssres}(2)) $\iff \int$ is order-preserving (by Theorem~\ref{thm:smsdual}).

(2): $r$ is \idemeo $\iff \sigma(r)$ is \radss (by Proposition~\ref{pr:ssres}(3)) $\iff \int$ is \idemss\ (by Theorem~\ref{thm:smsdual}(1 \& 3)).

(3) follows from (1) and (2).

(4): $r$ is functorial $\iff$ it is order-preserving on ambient modules (by Proposition~\ref{pr:ssres}(2)) $\iff \sigma(r)$ is order-preserving (by Proposition~\ref{pr:ssres}(1)) $\iff \int$ is \surfuncss (by Theorem~\ref{thm:smsdual}(2)) $\iff \int$ is functorial (by definition of interior operation).
\end{proof}

Summarizing the correspondences given in the above  sections, we have the following chart:

\[
\resizebox{\textwidth}{!}{
\boxed{
\xymatrix{
{\boxed{\begin{gathered}
\text{\underline{\resop}}\\
r\\
\rho(\alpha)\\
c(\beta)\\
\text{\opsub/} \\ \text{\surfunceo}\\ 
\text{\idemeo}\\
\text{\opamb}\\
\text{closure operation}\\
\text{functorial closure op.}
\end{gathered}}} \ar@/^1pc/@{->}[r]^(.62)\sigma&
{\boxed{\begin{gathered}
\text{\underline{submodule selector}}\\
\sigma(r)\\
\alpha\\
\beta^\dual\\
\text{\surfuncss}\\
\ \\
\text{\radss}\\
\text{order-preserving}\\
\ \\
\ 
\end{gathered}}} \ar@/^1pc/[l]^(.38)\rho \ar@/^1pc/[r]^\dual&
\boxed{\begin{gathered}
\text{\underline{submodule selector}}\\
\int(r)\\
\alpha^\dual\\
\beta\\
\text{order-preserving}\\
\ \\
\text{\idemss}\\
\text{\surfuncss}\\
\text{interior operation}\\
\text{functorial interior op.}
\end{gathered}} \ar@/^/[l]^\dual
}}
}\]

\section{Test ideals for preradicals}\label{sec:testprerad}
In this section we restrict to the case of preradicals, defined below, and give a more explicit form for the dual of a preradical and its finitistic version. We also give a result (Theorem \ref{thm:preradical})  that is a generalization of key properties of the tight closure test ideal originally described in \cite{HHmain}.

In this section, $R$ will be an arbitrary ring with identity (not necessarily commutative), and all modules are left $R$-modules.

\begin{defn}
\begin{enumerate}
\item A \emph{preradical} is a functorial submodule selector \cite{DikTho-closure}.

\item If $\alpha$ is a submodule selector on $\cM$, then the \emph{finitistic version} $\alpha_f$ is the submodule selector on $\cM$ given for each $M \in \cM$ by \[
\alpha_f(M) := \sum \{\alpha(L) \mid L \subseteq M, L \text{ \fg}, L \in \cM \}.
\]
\end{enumerate}
\end{defn}

We present the finitistic version $\alpha_f$ of $\alpha$ as a generalization of notions of finitistic tight closure and test ideals. 

\begin{lemma}\label{lem:finitistic}
Let $\alpha$ be a submodule selector on $\cM$. Then: \begin{enumerate}
\item $\alpha_f$ is order-preserving, 
\item if $\alpha$ is order-preserving, then $\alpha(M) = \alpha_f(M)$ for all finitely generated $M \in \cM$, and
\item if $\alpha$ is \surfuncss, then $\alpha_f$ is a preradical on $\cM$.
\end{enumerate}
\end{lemma}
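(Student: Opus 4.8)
The three parts are largely independent, so I would tackle them one at a time, in order. For part (1), I need to check that $\alpha_f$ is order-preserving: if $(L,M) \in \cP$, then every finitely generated submodule of $L$ in $\cM$ is also a finitely generated submodule of $M$ in $\cM$, so the defining sum for $\alpha_f(L)$ is a sub-sum of the one for $\alpha_f(M)$; hence $\alpha_f(L) \subseteq \alpha_f(M)$. I should also note in passing that $\alpha_f$ is genuinely a submodule selector — it lands in $M$ because each $\alpha(L) \subseteq L \subseteq M$, and the isomorphism-invariance is inherited from that of $\alpha$ since an isomorphism $M \to N$ carries finitely generated submodules to finitely generated submodules. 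This part should be routine.

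For part (2), suppose $\alpha$ is order-preserving and $M \in \cM$ is finitely generated. Then $M$ itself is one of the submodules $L$ in the defining sum, so $\alpha(M) \subseteq \alpha_f(M)$. Conversely, for each finitely generated $L \subseteq M$ in $\cM$, order-preservation of $\alpha$ gives $\alpha(L) \subseteq \alpha(M)$, so the whole sum $\alpha_f(M)$ is contained in $\alpha(M)$. Hence equality. This is immediate.

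Part (3) is the substantive one, and the main obstacle: given that $\alpha$ is surjection-functorial, I must show $\alpha_f$ is a preradical, i.e.\ functorial — equivalently (by part (1), which gives order-preservation) that $\alpha_f$ is surjection-functorial. So let $\pi : M \onto N$ be a surjection in $\cM$; I want $\pi(\alpha_f(M)) \subseteq \alpha_f(N)$. Since $\pi$ commutes with sums, it suffices to show $\pi(\alpha(L)) \subseteq \alpha_f(N)$ for each finitely generated $L \subseteq M$ in $\cM$. Here is the key point: $\pi(L)$ is a finitely generated submodule of $N$ (the image of finitely many generators), and $\pi|_L : L \onto \pi(L)$ is a surjection in $\cM$, so surjection-functoriality of $\alpha$ gives $\pi(\alpha(L)) = (\pi|_L)(\alpha(L)) \subseteq \alpha(\pi(L))$. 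Since $\pi(L)$ is a finitely generated submodule of $N$ in $\cM$, the right-hand side is one of the terms in the sum defining $\alpha_f(N)$, so $\alpha(\pi(L)) \subseteq \alpha_f(N)$, and we are done. The only subtlety to be careful about is that $\cM$ is closed under submodules and quotients, which guarantees $\pi(L) \in \cM$; this is exactly why we assumed that closure. Finally I would invoke the ``functorial $=$ order-preserving $+$ surjection-functorial'' characterization from the definition to conclude $\alpha_f$ is a preradical.
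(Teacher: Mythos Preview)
Your proposal is correct and follows essentially the same approach as the paper's proof. The only cosmetic difference is in part~(3): the paper directly verifies functoriality for an arbitrary map $g: M \to N$ (restricting $g$ to a finitely generated $M' \subseteq M$ yields a surjection $M' \onto g(M')$, to which surjection-functoriality of $\alpha$ applies), whereas you establish surjection-functoriality of $\alpha_f$ first and then combine it with part~(1) --- but the underlying mechanism is identical.
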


\begin{proof}
Let $L \subseteq M$ be a submodule inclusion in $\cM$.  Let $x \in \alpha_f(L)$.  Then there is some finitely generated submodule $L'$ of $L$ (hence also of $M$) with $L' \in \cM$ and $x\in \alpha(L')$.  Thus, $x\in \alpha_f(M)$.

Now suppose $\alpha$ is order-preserving and let $M \in \cM$ be finitely generated.  It is clear that $\alpha(M) \subseteq \alpha_f(M)$ since $M$ is a finitely generated submodule of $M$ in $\cM$.  For the other containment, let $x\in \alpha_f(M)$.  Then there is a finitely generated submodule $L$ of $M$ with $x\in \alpha(L)$.  But $\alpha(L) \subseteq \alpha(M)$ since $\alpha$ is order-preserving.  So $x\in \alpha(M)$.

Now suppose $\alpha$ is \surfuncss.  Let $g: M \ra N$ be a module map in $\cM$.  Let $x\in \alpha_f(M)$.  Then there is a \fg\ submodule $M' \subseteq M$, $M' \in \cM$, with $x\in \alpha(M')$.  Let $N' := g(M')$, and let $g': M' \onto N'$ be the induced map.  Then since $\alpha$ is \surfuncss, $g(x) = g'(x) \in \alpha(N')$.  Since $N' \in \cM$ and $N'$ is a \fg\ submodule of $N$, we have $g(x) \in \alpha_f(N)$.
\end{proof}

\begin{lemma}\label{lem:dspr}
Let $M, N \in \cM$ and let $\alpha$ be a preradical on $\cM$.  Then $\alpha(M \oplus N) = \alpha(M) \oplus \alpha(N)$ as submodules of $M \oplus N$.
\end{lemma}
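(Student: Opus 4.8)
The plan is to prove the two containments $\alpha(M \oplus N) \supseteq \alpha(M) \oplus \alpha(N)$ and $\alpha(M \oplus N) \subseteq \alpha(M) \oplus \alpha(N)$ separately, using in each direction the functoriality of $\alpha$ applied to the structural maps of the direct sum. Let $\iota_M: M \to M\oplus N$ and $\iota_N: N \to M\oplus N$ be the inclusions, and $\pi_M: M\oplus N \to M$, $\pi_N: M\oplus N \to N$ the projections; all of these lie in $\cM$ since $\cM$ is closed under submodules and quotients, so functoriality of the preradical $\alpha$ applies to each.

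For the containment $\supseteq$, I would apply $\alpha$ to the inclusions: since $\alpha$ is order-preserving (indeed functorial), $\iota_M(\alpha(M)) \subseteq \alpha(M\oplus N)$ and likewise $\iota_N(\alpha(N)) \subseteq \alpha(M\oplus N)$. Since $\alpha(M\oplus N)$ is a submodule, it is closed under addition, so $\alpha(M) \oplus \alpha(N) = \iota_M(\alpha(M)) + \iota_N(\alpha(N)) \subseteq \alpha(M\oplus N)$. For the containment $\subseteq$, let $(x,y) \in \alpha(M\oplus N)$. Applying functoriality to $\pi_M$ gives $x = \pi_M(x,y) \in \alpha(M)$, and applying it to $\pi_N$ gives $y = \pi_N(x,y) \in \alpha(N)$; hence $(x,y) \in \alpha(M) \oplus \alpha(N)$. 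Combining the two containments gives the equality as submodules of $M\oplus N$.

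Honestly, there is no real obstacle here — the statement is essentially a formal consequence of functoriality together with the fact that $\alpha$ selects a submodule — so the only thing to be careful about is that all four structural maps of the biproduct actually lie in the category $\cM$ (which follows from closure under submodules and quotients, since $M$ and $N$ embed in and are quotients of $M\oplus N$), and that one uses both the order-preserving half and the surjection-functorial half of the definition of "functorial" (the inclusions for $\supseteq$, the projections for $\subseteq$). I would write the argument in roughly the two short paragraphs above, with the one-line remark about membership of the structural maps in $\cM$ stated explicitly at the start.
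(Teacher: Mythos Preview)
Your proposal is correct and follows essentially the same approach as the paper's own proof: the paper likewise uses the canonical injections $i_1,i_2$ (your $\iota_M,\iota_N$) together with functoriality for the containment $\supseteq$, and the canonical projections $\pi_1,\pi_2$ (your $\pi_M,\pi_N$) with functoriality for $\subseteq$. Your added remark about the structural maps lying in $\cM$ is a reasonable bit of extra care (implicitly one also needs $M\oplus N\in\cM$ for the statement to make sense, which the paper takes for granted).
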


\begin{proof}
Let $(x,y) \in \alpha(M \oplus N)$.  Let $\pi_1$, $\pi_2$ be the canonical projections of $M \oplus N$ onto $M$, $N$ respectively.  Then since $\alpha$ is a preradical, $x = \pi_1(x,y) \in \alpha(M)$ and $y = \pi_2(x,y) \in \alpha(N)$.  Thus, $(x,y) \in \alpha(M) \oplus \alpha(N)$.

Conversely, let $(x,y) \in \alpha(M) \oplus \alpha(N)$.  Let $i_1$, $i_2$ be the canonical injections of $M,N$ respectively into $M \oplus N$.  Then since $\alpha$ is a preradical, $(x,0) = i_1(x) \in \alpha(M \oplus N)$ and $(0,y) = i_2(y) \in \alpha(M \oplus N)$.  Hence $(x,y) = (x,0) + (0,y) \in \alpha(M \oplus N)$.
\end{proof}

The following result was already known (see, for example \cite{Di-clCM}), but we include it to show how it follows naturally from the results of this paper.

\begin{cor}\label{cor:dscl}
Let $\cl$ be a \reseo\ functorial closure operation on $\cP$. Let $U \subseteq L$, $V \subseteq M$ be submodule inclusions in $\cP$.  Then \[
U^\cl_L \oplus V^\cl_M = (U \oplus V)^\cl_{L \oplus M}.
\]
\end{cor}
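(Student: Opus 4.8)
The plan is to reduce the statement to the combination of Lemma~\ref{lem:dspr} and the various correspondences established in the previous sections, rather than arguing with elements directly. First I would pass from the closure operation $\cl$ to its associated submodule selector $\alpha := \sigma(\cl)$ via Proposition~\ref{pr:resid}; since $\cl$ is residual, functorial, and idempotent, Proposition~\ref{pr:ssres} tells us that $\alpha$ is \surfuncss\ and order-preserving (hence functorial, i.e.\ a preradical) and \radss. So $\alpha$ is a preradical, which is exactly the hypothesis needed to invoke Lemma~\ref{lem:dspr}.

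Next I would unwind the definition of $\rho$ from Construction~\ref{cons:res}, which recovers $\cl = \rho(\alpha)$. For a submodule inclusion $U \subseteq L$, let $\pi_L : L \onto L/U$ be the natural surjection; then $U^\cl_L = \pi_L^{-1}(\alpha(L/U))$, and similarly $V^\cl_M = \pi_M^{-1}(\alpha(M/V))$ and $(U\oplus V)^\cl_{L\oplus M} = \pi^{-1}(\alpha((L\oplus M)/(U\oplus V)))$ where $\pi : L\oplus M \onto (L\oplus M)/(U\oplus V)$. The key structural observation is the canonical isomorphism $(L\oplus M)/(U\oplus V) \cong (L/U)\oplus(M/V)$, under which $\pi$ is identified with $\pi_L \oplus \pi_M$. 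Applying Lemma~\ref{lem:dspr} with the modules $L/U$ and $M/V$ gives $\alpha\big((L/U)\oplus(M/V)\big) = \alpha(L/U)\oplus\alpha(M/V)$, and then taking the preimage under $\pi_L\oplus\pi_M$ commutes with the direct sum decomposition: $(\pi_L\oplus\pi_M)^{-1}\big(\alpha(L/U)\oplus\alpha(M/V)\big) = \pi_L^{-1}(\alpha(L/U)) \oplus \pi_M^{-1}(\alpha(M/V)) = U^\cl_L \oplus V^\cl_M$. Combining these identifications yields the claimed equality. One should also check that the isomorphism-invariance clause in the definition of \extop\ lets us replace $(L\oplus M)/(U\oplus V)$ by $(L/U)\oplus(M/V)$ without affecting the closure, which is immediate.

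The main obstacle, such as it is, is bookkeeping: making sure the canonical isomorphism $(L\oplus M)/(U\oplus V)\cong (L/U)\oplus (M/V)$ is genuinely compatible with the quotient maps, so that Lemma~\ref{lem:dspr} applies on the nose and the preimage computation splits as a direct sum. There is no real difficulty here—preimages under a product map of a product subset always split—but it is the one place where care is needed. Everything else is a formal application of Proposition~\ref{pr:resid}, Proposition~\ref{pr:ssres}, and Lemma~\ref{lem:dspr}. An alternative, slightly slicker route would be to observe that a residual functorial closure operation is the same data as a preradical on $\cM$ (via $\sigma$), that ``$U^\cl_L\oplus V^\cl_M = (U\oplus V)^\cl_{L\oplus M}$'' is a statement purely about such closures, and that it is the $\rho$-image of the direct-sum identity for preradicals; but I would present the element-level unwinding above since it is self-contained and transparent.
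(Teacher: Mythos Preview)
Your proposal is correct and follows essentially the same approach as the paper: set $\alpha = \sigma(\cl)$, invoke Lemma~\ref{lem:dspr} on the quotients $L/U$ and $M/V$, and pull back through the componentwise quotient map using the identification $(L\oplus M)/(U\oplus V)\cong (L/U)\oplus (M/V)$. The paper's version is terser (it does not spell out why $\alpha$ is a preradical or the isomorphism-invariance point), but the argument is the same.
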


\begin{proof}
Let $\alpha := \sigma(\cl)$.  Then by Lemma~\ref{lem:dspr}, $\alpha((L/U) \oplus (M/V)) = \alpha(L/U) \oplus \alpha(M/V)$.  Hence, using the standard identification $\frac LU \oplus \frac MV = \frac{L \oplus M}{U \oplus V}$, we have 
\[ 
\begin{aligned}
U_L^{\cl} \oplus V_M^{\cl} &=\pi_1^{-1}(\alpha(L/U)) \oplus \pi_2^{-1}(\alpha(M/V)) \\
&=\pi^{-1}(\alpha(L/U) \oplus \alpha(M/V)) \\
&=\pi^{-1}(\alpha(L/U \oplus M/V)) = \pi^{-1}\left(\alpha\left(\frac{L \oplus M}{U \oplus V}\right)\right)\\
&=(U \oplus V)_{L \oplus M}^{\cl},
\end{aligned}
\]
where $\pi:L \oplus M \to (L \oplus M)/(U \oplus V)$ is the quotient map, and $\pi_1:L \to L/U$ and $\pi_2:M \to M/V$ are its component maps.
\end{proof}

The following result was inspired by the main results of \cite{PeRG}. Our result requires the ring to be complete and restricts the module category, but allows arbitrary preradicals (rather than just those coming from module closures). In the case of tight closure, this is a classical result, originally given as Proposition 8.23 in \cite{HHmain} for the finitistic version and discussed in the lecture notes of October 26th in \cite{HoFNDTC}.

\begin{thm}\label{thm:preradical}
Let $(R,\m,k)$ be a commutative complete Noetherian local ring and $E$ the injective hull of its residue field.  Let $\alpha$ be a preradical on the category of \emph{Artinian} $R$-modules $\cA$.  Let $\alpha_f$ be the finitistic version.  Then:
\begin{align*}
\alpha^\dual(R) &= \ann (\alpha(E)) = \bigcap_{M \in \cA} \ann(\alpha(M))
\\ &\subseteq \alpha_f^\dual(R) = \ann(\alpha_f(E)) 
\\ &= \bigcap_{\lambda(M)< \infty} \ann(\alpha(M)) \subseteq \bigcap_{\lambda(R/I) < \infty} \ann (\alpha(R/I)).
\end{align*}

Moreover, the last containment is an equality whenever $R$ is approximately Gorenstein (e.g. reduced, or depth at least 2).

If $\alpha$ represents a residual closure operation $\cl$ defined on artinian modules and on \emph{all} ideals, in such a way that for any ideal $J$, there is a collection $\{I_\lambda\}_{\lambda \in \Lambda}$ of $\m$-primary ideals for some index set $\Lambda$, such that $J = \bigcap_\lambda I_\lambda$ and $J^\cl = \bigcap_\lambda I_\lambda^\cl$.  Then $\bigcap_{\lambda(R/I) < \infty} \ann (\alpha(R/I)) = \bigcap_{I \textnormal{ ideal of } R} (I : I^{\cl})$.
\end{thm}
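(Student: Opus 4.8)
The plan is to prove the chain of (in)equalities in stages, using the smile-dual description from Theorem~\ref{alternatedualdescription} together with the preradical structure established in Lemmas~\ref{lem:finitistic} and~\ref{lem:dspr}. First I would handle the identity $\alpha^\dual(R) = \ann(\alpha(E)) = \bigcap_{M \in \cA} \ann(\alpha(M))$. By Theorem~\ref{alternatedualdescription}, $\alpha^\dual(R) = \{z \in R : g(z) = 0 \text{ for all } g \in \alpha(R^\vee)\}$, and $R^\vee = E$; since each $g \in \alpha(E) \subseteq E = \Hom_R(R,E)$ is itself an $R$-linear map, $g(z) = 0$ for all such $g$ is exactly $z \cdot \alpha(E) = 0$, giving $\alpha^\dual(R) = \ann(\alpha(E))$. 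The containment $\ann(\alpha(E)) \supseteq \bigcap_{M} \ann(\alpha(M))$ is trivial since $E \in \cA$; for the reverse, given any Artinian $M$ and any $x \in M$, the map $R \to M$ sending $1 \mapsto x$ dualizes (using that $\alpha$ is a preradical, hence functorial) to show $\alpha(M) \subseteq $ the appropriate image, and chasing this through Matlis duality shows $\ann(\alpha(E))$ kills $\alpha(M)$; alternatively embed $M$ into a finite direct sum of copies of $E$ and invoke Lemma~\ref{lem:dspr}. The same argument, restricted to finite-length $M$ (so that $\alpha_f(E) = \sum_{\lambda(L)<\infty, L \subseteq E} \alpha(L)$), yields $\alpha_f^\dual(R) = \ann(\alpha_f(E)) = \bigcap_{\lambda(M)<\infty} \ann(\alpha(M))$, where one uses Lemma~\ref{lem:finitistic}(2) to replace $\alpha_f$ by $\alpha$ on each finite-length module.

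Next, the containment $\alpha^\dual(R) \subseteq \alpha_f^\dual(R)$ is immediate from $\alpha_f(E) \subseteq \alpha(E)$ (the finitistic version is always contained in the original, as it is a sum of $\alpha(L)$ over submodules $L$), hence $\ann(\alpha(E)) \subseteq \ann(\alpha_f(E))$. The containment $\bigcap_{\lambda(M)<\infty} \ann(\alpha(M)) \subseteq \bigcap_{\lambda(R/I)<\infty} \ann(\alpha(R/I))$ is trivial, since each $R/I$ with $I$ $\m$-primary is a finite-length module.

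For the "approximately Gorenstein" clause, recall that $R$ approximately Gorenstein means there is a descending chain of $\m$-primary ideals $I_1 \supseteq I_2 \supseteq \cdots$ cofinal in the $\m$-adic topology with each $R/I_t$ Gorenstein, i.e. $(R/I_t)^\vee \cong R/I_t$. The point is that for such $I_t$, the module $R/I_t$ is self-dual, so that for any finite-length $M$ one can use a presentation/co-presentation to reduce killing $\alpha(M)$ to killing $\alpha(R/I_t)$ for suitable $t$; concretely, every finite-length $M$ embeds in a finite direct sum of copies of $E$, and $E = \varinjlim (R/I_t)^\vee = \varinjlim R/I_t$, so $\alpha(E) = \varinjlim \alpha(R/I_t)$ by compatibility of functorial submodule selectors with the relevant direct limit (the maps $R/I_t \to R/I_{t+1}$ here are the natural surjections, and functoriality gives the needed compatibility on $\alpha$-values), whence $\ann(\alpha_f(E)) \supseteq \bigcap_t \ann(\alpha(R/I_t)) \supseteq \bigcap_{\lambda(R/I)<\infty} \ann(\alpha(R/I))$, giving the reverse containment. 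I expect this direct-limit interchange — justifying $\alpha(E) = \varinjlim \alpha(R/I_t)$ from functoriality alone, and matching it against $\alpha_f(E)$ — to be the main obstacle, since it requires care about which submodule selectors commute with which limits; the cofinality of the $I_t$ among \emph{all} $\m$-primary ideals is what bridges $\bigcap_t$ and $\bigcap_{\lambda(R/I)<\infty}$.

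Finally, for the closure-operation clause: given the hypothesis that every ideal $J$ (not just $\m$-primary ones) satisfies $J = \bigcap_\lambda I_\lambda$ and $J^\cl = \bigcap_\lambda I_\lambda^\cl$ for some family of $\m$-primary $I_\lambda$, I would argue as follows. Since $\cl$ is residual, $\ann(\alpha(R/I)) = \ann(\alpha(R/I))$ where $\alpha(R/I) = I^\cl/I$, so $\ann(\alpha(R/I)) = (I : I^\cl)$. Thus $\bigcap_{\lambda(R/I)<\infty} \ann(\alpha(R/I)) = \bigcap_{I\ \m\text{-primary}} (I : I^\cl)$. For any ideal $J$ and any $x \in J^\cl$, writing $J = \bigcap_\lambda I_\lambda$ and $J^\cl = \bigcap_\lambda I_\lambda^\cl$: if $r \in \bigcap_{I\ \m\text{-primary}}(I:I^\cl)$ then $r x \in r I_\lambda^\cl \subseteq I_\lambda$ for each $\lambda$, so $rx \in \bigcap_\lambda I_\lambda = J$, showing $r \in (J:J^\cl)$. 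Hence $\bigcap_{I\ \m\text{-primary}}(I:I^\cl) \subseteq \bigcap_{J \text{ ideal}}(J:J^\cl)$; the reverse inclusion is trivial since $\m$-primary ideals are ideals. This establishes the claimed equality $\bigcap_{\lambda(R/I)<\infty}\ann(\alpha(R/I)) = \bigcap_{I\text{ ideal of }R}(I:I^\cl)$, and completes the proof.
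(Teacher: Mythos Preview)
Your argument for the main chain of (in)equalities and for the closure-operation clause is correct and matches the paper's proof closely (the paper computes $\alpha^\dual(R)=\ann(\alpha(E))$ via the dual exact sequence rather than Theorem~\ref{alternatedualdescription}, but this is cosmetic).

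The gap is in your treatment of the approximately Gorenstein case. Two concrete problems. First, the transition maps in $E=\varinjlim R/I_t$ are \emph{not} the natural surjections: with $I_1\supseteq I_2\supseteq\cdots$ the natural surjections go $R/I_{t+1}\onto R/I_t$, and their inverse limit is $R$, not $E$. The maps you need are the duals of these surjections, transported through the Gorenstein self-duality $(R/I_t)^\vee\cong R/I_t$; they are \emph{injections} $R/I_t\hookrightarrow R/I_{t+1}$, exhibiting $E$ as the \emph{union} of an ascending chain of submodules isomorphic to the $R/I_t$. Second, the assertion ``$\alpha(E)=\varinjlim\alpha(R/I_t)$'' does not follow from functoriality of a preradical, and in any case is not what the statement requires: you need to control $\alpha_f(E)$, not $\alpha(E)$. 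Your sketch conflates the two.

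Your idea is salvageable once the maps are corrected: since $E=\bigcup_t R/I_t$ along injections, every \fg\ submodule $L\subseteq E$ sits inside some $R/I_t$, so $\alpha(L)\subseteq\alpha(R/I_t)$ by order-preservation, and hence $\bigcap_t\ann(\alpha(R/I_t))$ annihilates $\alpha_f(E)=\sum_L\alpha(L)$. The paper, however, avoids the limit entirely and argues more directly: given a finite-length $M$ (reduced to $\oplus$-indecomposable via Lemma~\ref{lem:dspr}), set $J=\ann(M)$, choose an irreducible $\m$-primary $I\subseteq J$ by approximate Gorensteinness, and observe that $M$ is then an $R/I$-module with $R/I$ Artinian Gorenstein, so $M\hookrightarrow E_{R/I}(k)=R/I$. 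Order-preservation gives $\alpha(M)\subseteq\alpha(R/I)$, and the reverse containment follows. This single-embedding argument is both shorter and avoids the limit-interchange issue you flagged as the main obstacle.
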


\begin{proof}
We prove all the required containments.

To see that $\alpha^\dual(R) = \ann (\alpha(E))$, recall that under the identification $R = E^\vee$, the dual of the exact sequence $0 \ra \alpha(E) \ra E \ra E/\alpha(E) \ra 0$ becomes $0 \ra (E/\alpha(E))^\vee \ra R \arrow{\pi} \alpha(E)^\vee \ra 0$, where $\pi(r)(x) := rx$.  Thus, we have 
$\alpha^\dual(R) = (E/\alpha(E))^\vee = \ker \pi 
= \{r \in R \mid rx = 0 \text{ for all } x\in \alpha(E)\} = \ann (\alpha(E))$.

The fact that $\alpha_f^\dual(R) = \ann (\alpha_f(E))$ follows from replacing $\alpha$ by $\alpha_f$ in the above, using the fact from Lemma~\ref{lem:finitistic} that $\alpha_f$ is a preradical.

To prove that $\ann (\alpha(E)) \subseteq \bigcap_{M \in \cA} \ann(\alpha(M))$, we show that $\ann (\alpha(E))$ annihilates $\alpha(M)$ for all artinian $M$. Note that for any artinian $M$, we have that $M^\vee$ is finitely generated.  Hence, there is a surjection $p: R^{\oplus t} \onto M^\vee$ for some positive integer $t$.  Dualizing, we have $p^\vee: M \hookrightarrow E^{\oplus t}$.  Now, $\ann (\alpha(E)) = \ann (\alpha(E)^{\oplus t}) = \ann(\alpha(E^{\oplus t}))$ by Lemma~\ref{lem:dspr}.  But since $M$ embeds as a submodule of $E^{\oplus t}$ and $\alpha$ is order-preserving, we have $\alpha(M) \hookrightarrow \alpha(E^{\oplus t})$.  So any element of $R$ that annihilates $\alpha(E^{\oplus t})$ annihilates $\alpha(M)$ as well.

The reverse containment $\ann (\alpha(E)) \supseteq \bigcap_{M \in \cA} \ann(\alpha(M))$ follows from the fact that $E$ is an artinian $R$-module.

Next we show that $\alpha_f(E)$ is the common annihilator of all modules $\alpha(M)$ for $M$ finite length.  For this, let $r \in \ann (\alpha_f(E))$.  Then for any finite-length $R$-module $M$, we have that $M$ is artinian and $\alpha(M) = \alpha_f(M)$. Hence by the above, $r \in \ann (\alpha(M))$.  Conversely, let $r \in \bigcap_{\lambda(M)<\infty} \ann (\alpha(M))$.  Let $x\in \alpha_f(E)$.  Then there is a finitely generated (hence finite length) submodule $M$ of $E$ with $x \in \alpha(M)$.  Thus $rx=0$, completing the proof of the current claim.

Next we prove that  $\alpha^\dual(R) \subseteq \alpha_f^\dual(R)$. For any $r \in \alpha^\dual(R)$ and any finite-length $M$, we have that $M$ is artinian, so $r \alpha(M) = 0$.  Hence $r\in \alpha_f^\dual(R)$.  Thus, $\alpha^\dual(R) \subseteq \alpha_f^\dual(R)$.

The last displayed containment is clear.

Now suppose $R$ is approximately Gorenstein.  Let $M$ be a finite length $R$-module.  By Lemma~\ref{lem:dspr}, we may assume $M$ is $\oplus$-indecomposable.  Let $J = \ann (M)$.  Since $J$ is $\m$-primary and $R$ is approximately Gorenstein, there is an irreducible $\m$-primary ideal $I$ with $I \subseteq J$.  Then $M$ is a direct sum of indecomposable finite-length modules over the Artinian Gorenstein ring $R/I$.  Hence $M \hookrightarrow E_{R/I}(k) = R/I$. 

For the final statement, let $\cl$ be a closure operation satisfying the given conditions.  The containment $\supseteq$ follows from the fact that $\alpha(R/I) = (I : I^\cl)$ for any ideal $I$ of finite colength.  Conversely, let $r\in R$ such that $r$ annihilates $\alpha(R/I)$ for every ideal $I$ of finite colength.  Let $J$ be an arbitrary ideal and $x\in J^\cl$.  Let $\{I_\lambda\}_{\lambda \in \Lambda}$ be a collection of ideals as in the hypothesis.  Then suppose $r \in \alpha(R/I)$ for all $\m$-primary $I$.  Then for any $x\in J^\cl$, we have $x\in I_\lambda^\cl$ for all $\lambda$, whence $rx \in \bigcap_\lambda I_\lambda = J$.
\end{proof}

\begin{rem}
\label{rem:tcicmprimary}
Recall that the last condition above holds for both tight closure and (liftable) integral closure.  Namely, $I^* = \bigcap_n (I + \m^n)^*$ \cite[Theorem 1.5(4)]{HuTC} and $I^- = \bigcap_n (I+\m^n)^-$ \cite[Corollary 6.8.5]{HuSw-book} respectively.
\end{rem}

\section{Exactness properties for preradicals}\label{sec:exact}

Since a preradical is an additive functor (see Lemma~\ref{lem:dspr}), it is natural to ask when it preserves exactness.  Of course injectivity is preserved by definition.  Accordingly, we recall the following results, for which $R$ will be an arbitrary ring with identity (not necessarily commutative), and all modules left $R$-modules.

\begin{defpr}[Hereditary and cohereditary preradicals {\cite[6.9]{LiftingModules}}]
\label{def:heredcohered}
Let $\alpha$ be a preradical on $\cM$.  Then the following are equivalent \begin{enumerate}
\item $\alpha$ is left exact.
\item For any submodule inclusion $L \subseteq M$ in $\cM$, we have $\alpha(L) = L \cap \alpha(M)$.
\item $\alpha$ is \idemss\ and whenever $L \subseteq M$ is a submodule inclusion in $\cM$ with $\alpha(M) = M$, we have $\alpha(L) = L$.
\end{enumerate}
In this case, we say that $\alpha$ is \emph{hereditary}.

On the other hand, the following are equivalent: \begin{enumerate}
\item $\alpha$ preserves surjections, i.e. given a surjection $\pi:M \to N$ in $\cM$, $\pi(\alpha(M))=\alpha(N)$.
\item For any submodule inclusion $L \subseteq M$ in $\cM$, we have $\alpha(M/L) = \frac{L+\alpha(M)}{L}$.
\item $\alpha$ is \radss\ and whenever $L \subseteq M$ is a submodule inclusion in $\cM$ with $\alpha(M) = 0$, we have $\alpha(M/L) = 0$ as well.
\end{enumerate}
In this case, we say that $\alpha$ is \emph{cohereditary}.
\end{defpr}

\begin{rem}
The identity preradical is both hereditary and cohereditary.  The same holds for the preradical that sends every module to its zero submodule.

Note also that hereditary is equivalent to left exact, but cohereditary is weaker than right exact.
\end{rem}

\begin{cor}
Let $\alpha$ be a preradical on $\cM$.  Then $\alpha$ is an exact functor if and only if it is hereditary and cohereditary.
\end{cor}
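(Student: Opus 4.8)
The plan is to reduce this to the standard fact that an additive functor between abelian categories is exact if and only if it is left exact and preserves epimorphisms. A preradical is additive by Lemma~\ref{lem:dspr}, and Definition/Proposition~\ref{def:heredcohered} identifies ``hereditary'' with ``left exact'' and ``cohereditary'' with ``preserves surjections,'' so once that fact is invoked the corollary is immediate. (Note that it is genuinely ``preserves epimorphisms'' and not ``right exact'' that is wanted here: as the preceding remark observes, cohereditary is strictly weaker than right exact.) Rather than cite the categorical statement as a black box, I would verify both directions directly in the present setting, which is easy because a preradical is a subfunctor of the identity and everything in sight is concrete.

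For the forward implication I would assume $\alpha$ is exact. Applying $\alpha$ to an arbitrary short exact sequence $0 \to L \to M \to N \to 0$ in $\cM$ and reading off exactness at $\alpha(L)$ and $\alpha(M)$ shows $\alpha$ is left exact, hence hereditary. Applying $\alpha$ instead to the short exact sequence $0 \to \ker\pi \to M \arrow{\pi} N \to 0$ attached to an arbitrary surjection $\pi\colon M \onto N$ in $\cM$ --- legitimate since $\cM$ is closed under submodules and quotients --- shows the induced map $\alpha(M) \to \alpha(N)$ is onto, i.e.\ $\pi(\alpha(M)) = \alpha(N)$; since functoriality already forces $\pi(\alpha(M)) \subseteq \alpha(N)$, this is exactly the cohereditary condition.

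For the converse I would assume $\alpha$ is hereditary and cohereditary and take a short exact sequence $0 \to L \arrow{\iota} M \arrow{\pi} N \to 0$ in $\cM$. Since $\alpha$ is a subfunctor of the identity, $\alpha(\iota)$ is the restriction of $\iota$ and hence injective, which gives exactness at $\alpha(L)$; heredity, in the form $\alpha(L) = L \cap \alpha(M)$, identifies that image with $\ker \alpha(\pi) = \alpha(M) \cap \ker\pi = \alpha(M) \cap L$, which gives exactness at $\alpha(M)$; and the cohereditary property gives $\pi(\alpha(M)) = \alpha(N)$, which gives exactness at $\alpha(N)$. Hence $\alpha$ is exact. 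I do not anticipate any genuine obstacle: the whole content has been pre-packaged in Definition/Proposition~\ref{def:heredcohered}, and the only thing to watch is the routine bookkeeping confirming that the maps and submodules produced by the subfunctor $\alpha$ really are the restrictions and intersections one expects.
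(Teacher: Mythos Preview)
Your proposal is correct and aligns with the paper's treatment: the paper states this corollary without proof, evidently regarding it as immediate from Definition/Proposition~\ref{def:heredcohered}, and what you have written is precisely the routine unpacking of that fact. There is nothing to compare or correct.
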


\begin{rem}
From the point of view of closure operations, such preradicals appear quite scarce.  Indeed, both hereditariness and cohereditariness fail for many common closure operations, such as tight closure and liftable integral closure.  However, as we see in the next result, these two properties are dual to each other:
\end{rem}

\begin{prop}\label{pr:dualhered}
Let $R$ be a commutative complete local Noetherian ring and let $\alpha$ be a preradical on $\cM^\vee$.  Then $\alpha^\dual$ is hereditary if and only if $\alpha$ is cohereditary.
\end{prop}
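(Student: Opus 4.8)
The plan is to transport both hypotheses across Matlis duality, using Theorem~\ref{thm:smsdual}(\ref{it:funcdual}) (which says $\dual$ interchanges surjection-functoriality and order-preservation) together with the order-reversing bijection between submodule lattices that Matlis duality provides. First I would note that $\alpha^\dual$ is itself a preradical, so that the notion ``hereditary'' even applies to it: since $\alpha$ is a preradical it is order-preserving and surjection-functorial, so Theorem~\ref{thm:smsdual}(\ref{it:funcdual}) applied to $\alpha$ makes $\alpha^\dual$ order-preserving, and the same item applied to $\alpha^\dual$ (using $(\alpha^\dual)^\dual=\alpha$) makes $\alpha^\dual$ surjection-functorial; hence $\alpha^\dual$ is functorial.

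Next I would replace the two hypotheses by the most convenient equivalent forms from Definition/Proposition~\ref{def:heredcohered}: $\alpha^\dual$ is hereditary iff $\alpha^\dual(L)=L\cap\alpha^\dual(M)$ for every submodule inclusion $L\subseteq M$ in $\cM$, and $\alpha$ is cohereditary iff $\alpha$ preserves surjections, i.e. $q(\alpha(A))=\alpha(B)$ for every surjection $q\colon A\onto B$ in $\cM^\vee$. So it suffices to show that these two families of equalities are equivalent.

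The heart of the argument is a dictionary. Fix $L\subseteq M$ in $\cM$ and put $N:=(M/L)^\vee$; dualizing $0\to L\to M\to M/L\to0$ identifies $L^\vee$ with $M^\vee/N$, the natural surjection $q\colon M^\vee\onto M^\vee/N=L^\vee$ being the Matlis dual of $L\hookrightarrow M$. Under the order-reversing bijection between submodules of $M=M^{\vee\vee}$ and submodules of $M^\vee$, the submodule $L$ corresponds to $N$ (i.e. $L=(M^\vee/N)^\vee$) and $\alpha^\dual(M)=(M^\vee/\alpha(M^\vee))^\vee$ corresponds to $\alpha(M^\vee)$, so --- since that bijection carries $\cap$ to $+$ --- the submodule $L\cap\alpha^\dual(M)$ corresponds to $N+\alpha(M^\vee)$; thus
\[L\cap\alpha^\dual(M)=\bigl(M^\vee/(N+\alpha(M^\vee))\bigr)^\vee=\bigl(L^\vee/q(\alpha(M^\vee))\bigr)^\vee,\]
while $\alpha^\dual(L)=(L^\vee/\alpha(L^\vee))^\vee$ by definition. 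Because $(L^\vee/P_1)^\vee$ and $(L^\vee/P_2)^\vee$ coincide as submodules of $L$ exactly when $P_1=P_2$, this yields $\alpha^\dual(L)=L\cap\alpha^\dual(M)\iff q(\alpha(M^\vee))=\alpha(L^\vee)$. Finally, as $L\subseteq M$ ranges over all submodule inclusions in $\cM$, the surjections $q$ range, up to isomorphism, over all surjections in $\cM^\vee$ (harmless, since $\alpha$ preserves isomorphisms), so the two families of equalities above are equivalent, proving the proposition.

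I expect the only genuine difficulty to be the bookkeeping in the dictionary step: one must check that the canonical identifications $L^\vee\cong M^\vee/(M/L)^\vee$ and $M^{\vee\vee}\cong M$ are mutually compatible, that $(N+\alpha(M^\vee))/N$ really is $q(\alpha(M^\vee))$ after identifying $M^\vee/N$ with $L^\vee$, and that ``intersection of submodules dualizes to sum of the dual submodules'' is applied in the right direction. None of this is deep, but it must be written carefully; an alternative that sidesteps most of it is to argue element-wise from the description $\alpha^\dual(M)=\{z\in M:\ g(z)=0\ \text{for all}\ g\in\alpha(M^\vee)\}$ of Theorem~\ref{alternatedualdescription}, using that $g(z)=q(g)(z)$ for $z\in L$ and $g\in M^\vee$.
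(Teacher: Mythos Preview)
Your proof is correct and follows essentially the same approach as the paper's: both reduce the hereditary condition $\alpha^\dual(L)=L\cap\alpha^\dual(M)$ to the surjectivity of $\alpha(M^\vee)\to\alpha(L^\vee)$ via Matlis duality, and then note that every surjection in $\cM^\vee$ arises this way. The paper does this more tersely by observing that $\alpha^\dual(L)=L\cap\alpha^\dual(M)$ is equivalent to injectivity of $L/\alpha^\dual(L)\to M/\alpha^\dual(M)$ and then dualizing (using $(L/\alpha^\dual(L))^\vee=\alpha(L^\vee)$), whereas you unpack the same content through the submodule-lattice bijection (intersection $\leftrightarrow$ sum); the extra care you take to verify $\alpha^\dual$ is a preradical is a nice touch the paper leaves implicit.
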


\begin{proof}
Let $L \subseteq M$ be a submodule inclusion.  Then $\alpha^\dual(M) \cap L = \alpha^\dual(L)$ if and only if the induced map $L/\alpha^\dual(L) \rightarrow M/\alpha^\dual(M)$ is injective.  Taking Matlis duals, the above map is injective if and only if  the map 
\[\alpha(L^\vee) = \alpha^{\dual\dual}(L^\vee) = (L / \alpha^\dual(L))^\vee \leftarrow (M/\alpha^\dual(M))^\vee = \alpha(M^\vee)\] is surjective.  But every surjection in $\cM^\vee$ occurs as a map of the form $M^\vee \onto L^\vee$, where $L \subseteq M$ is a submodule inclusion in $\cM$.
\end{proof}

\begin{example}
The casual reader might think that the duality ${}^\dual$ we are using makes left exactness dual to right exactness.  But this is not true, as the following class of examples shows.

Let $I$ be an ideal of a commutative complete local Noetherian ring, and let $\sigma(M) := \ann_M(I)$ for all $M \in \cM$.  Let $\tau = \sigma^\dual$; then $\tau(N) = IN$ for all $N \in \cM^\vee$.  We will explore these preradicals further in \S\ref{sec:trace}.  It is easily seen both that $\sigma$ is left exact and that $\tau=\sigma^\dual$ is not right exact (though it is surjection-preserving).  Indeed, it is not even exact in the middle.  For a counterexample, take any submodule inclusion $L \subseteq M$ with $IL \neq IM \cap L$.
\end{example}

\section{Limits of \subsel s}\label{sec:lim}

We begin the current section by imposing a binary relation $\leq$ on the collection of all \subsel s on $\cM$, as follows: We say $\alpha \leq \beta$ if $\alpha(M) \subseteq \beta(M)$ for all $M \in \cM$.  This is easily seen to be a partial order. We will discuss direct and inverse limits of appropriate posets of submodule selectors, and show that the dual operation reverses the type of limit.

In this section, let $R$ be a ring with identity (not necessarily commutative), and all modules are left $R$-modules.

\begin{defn}
Let $\Gamma$ be a directed poset (i.e., for all $i,j \in \Gamma$, there is a $k \in \Gamma$ such that $i,j \le k$), and $\{s_j\}_{j \in \Gamma}$ a set of \subsel s such that if $i \le j$, $s_i \leq s_j$. Define $(\displaystyle \varinjlim_{j \in \Gamma} s_j)(M):=\bigcup_{j \in \Gamma} s_j(M)=\sum_{j \in \Gamma} s_j(M).$
\end{defn}

\begin{prop}\label{pr:dirlim}
$s := \displaystyle \varinjlim_{j \in \Gamma} s_j$ is a \subsel.  Moreover, it is order-preserving (resp. \surfuncss, resp. functorial) if all the $s_j$ have the corresponding property.  Further, assuming that all the $s_j$ are functorial, $s$ is hereditary if all the $s_j$ are too, and if the $s_j$ are cohereditary, then $s$ is also cohereditary, and in particular \radss.
\end{prop}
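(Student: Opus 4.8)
The plan is to verify each asserted property of $s := \varinjlim_{j \in \Gamma} s_j$ in turn, exploiting the fact that a union over a \emph{directed} poset behaves like a filtered colimit: any finite subset of elements of $s(M) = \bigcup_j s_j(M)$ already lies in a single $s_j(M)$. First I would check that $s$ is a \subsel: clearly $s(M) \subseteq M$ since each $s_j(M) \subseteq M$, and the isomorphism condition is inherited termwise, since an isomorphism $\phi$ carries $\bigcup_j s_j(M)$ onto $\bigcup_j \phi(s_j(M)) = \bigcup_j s_j(\phi(M))$. For order-preservation, given $(L,M) \in \cP$ and $x \in s(L)$, pick $j$ with $x \in s_j(L) \subseteq s_j(M) \subseteq s(M)$; surjection-functoriality and functoriality are entirely analogous (apply the relevant $s_j$-property to a single index $j$ witnessing membership of the chosen element, then include into $s$). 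So the first three claims need only the observation that membership in $s(M)$ is witnessed at some finite stage, together with the termwise property.

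The next task is hereditariness, assuming all $s_j$ are functorial and hereditary. Using the characterization in Definition/Proposition~\ref{def:heredcohered}, it suffices to show $s(L) = L \cap s(M)$ for every submodule inclusion $L \subseteq M$ in $\cM$. The containment $\subseteq$ follows from order-preservation of $s$ (already established) together with $s(L) \subseteq L$. For $\supseteq$: if $x \in L \cap s(M)$, pick $j$ with $x \in s_j(M)$; then $x \in L \cap s_j(M) = s_j(L)$ by hereditariness of $s_j$, so $x \in s(L)$. Hence $s(L) = L \cap s(M)$ and $s$ is hereditary.

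For cohereditariness, assume all $s_j$ are functorial and cohereditary; I would use the characterization $s_j(M/L) = (L + s_j(M))/L$ for all submodule inclusions $L \subseteq M$. Compute directly:
\[
s(M/L) = \bigcup_j s_j(M/L) = \bigcup_j \frac{L + s_j(M)}{L} = \frac{L + \bigcup_j s_j(M)}{L} = \frac{L + s(M)}{L},
\]
where the middle equality uses that $\{s_j(M)\}_j$ is a directed (increasing) family of submodules so the union commutes with the sum $L + (-)$ and with passing to the quotient by $L$. This is the cohereditariness criterion, so $s$ is cohereditary; and since cohereditary preradicals are in particular \radss\ (take $L = s(M)$, or invoke the equivalence in Definition/Proposition~\ref{def:heredcohered} directly), $s$ is \radss\ as well. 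I do not expect any serious obstacle here; the one point requiring a moment's care is the interchange of the (possibly infinite) union with the quotient/sum operations in the cohereditary step, which is legitimate precisely because the index poset $\Gamma$ is directed and the family $\{s_j(M)\}$ is monotone, so every element of $L + \bigcup_j s_j(M)$ already lies in $L + s_j(M)$ for a single $j$.
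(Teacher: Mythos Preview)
Your proof is correct. For the \subsel\ property, order-preservation, \surfuncss, functoriality, and hereditariness, your argument is essentially identical to the paper's: in each case you pick a witnessing index $j$ and apply the termwise property.

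The one genuine difference is in the cohereditary step. The paper uses characterization~(3) of Definition/Proposition~\ref{def:heredcohered}: it first proves directly that $s$ is \radss\ (showing $s_j(M/\sum_i s_i(M)) = 0$ for each $j$ by combining \radss ness of $s_j$ with the cohereditary vanishing condition applied to the further quotient $M/s_j(M) \onto M/\sum_i s_i(M)$), and then separately checks that $s(M)=0$ implies $s(M/L)=0$. You instead use characterization~(2), computing $s(M/L) = \bigcup_j s_j(M/L) = \bigcup_j (L+s_j(M))/L = (L+s(M))/L$ in one line. Your route is shorter and more transparent; the paper's route has the minor advantage of making the \radss\ conclusion explicit as a separate step rather than as a specialization. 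Both are fine.

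One small remark: the interchange $\bigcup_j (L + s_j(M)) = L + \bigcup_j s_j(M)$ actually holds for \emph{any} family of submodules, not only directed ones (an element of either side is $\ell + y$ with $y \in s_j(M)$ for some $j$). Directedness is needed earlier, to ensure that $\bigcup_j s_j(M)$ is itself a submodule so that the expression $(L + s(M))/L$ is meaningful, but not for the interchange step you flagged.
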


\begin{proof}
To see that $s$ is a \subsel, let $g: M \rightarrow N$ be an isomorphism in $\cM$, and let $x\in s(M)$.  Then there is some $j\in \Gamma$ such that $x\in s_j(M)$.  Since $g$ is an isomorphism and $s_j$ is a \subsel, $g(x) \in s_j(N)$.  Hence, $g(x) \in \bigcup_i s_i(N) = s(N)$.

Now suppose all the $s_j$ are order-preserving.  Let $L \subseteq M$ be a submodule inclusion in $\cM$.  Let $x\in s(L)$.  Then there is some $j$ with $x\in s_j(L)$.  Since $s_j$ is order-preserving, $x\in s_j(M)$.  Hence $x\in s(M)$.

Next suppose all the $s_j$ are \surfuncss.  Let $\pi: L \onto M$ be a surjection in $\cM$.  Let $x\in s(L)$.  Then there is some $j$ with $x\in s_j(L)$.  Since $s_j$ is \surfuncss, $\pi(x) \in s_j(M)$.  Hence $\pi(x)\in s(M)$.

Now suppose all the $s_j$ are functorial and hereditary.  Let $L \subseteq M$ be a submodule inclusion in $\cM$.  Let $x \in s(M) \cap L$.  Then there is some $j$ with $x\in s_j(M) \cap L = s_j(L)$ (the latter since $s_j$ is hereditary), whence $x \in s(L)$.

Suppose the $s_j$ are cohereditary. Then in particular they are \radss\ (see Definition/Proposition \ref{def:heredcohered}). Let $L \subseteq M$ be a submodule inclusion in $\cM$. We first prove that $s$ is \radss.

We have
\[s(M/s(M))=\sum_j s_j\left( \frac{M}{\sum_j s_j(M)}\right).\]
Since the $s_j$ are \radss, $s_j(M/s_j(M))=0$ for all $j \in \Gamma$. Since the $s_j$ are cohereditary, this implies that 
\[s_j\left( \frac{M}{\sum_j s_j(M)}\right)=0\]
for all $j \in \Gamma$. This implies that their sum is 0, and so $s(M/s(M))=0$.

Now we prove that if $s(M)=0$, then $s(M/L)=0$. We have 
\[s(M/L)=\sum_{j} s_j(M/L)=0,\]
since $0=s(M)=\sum_j s_j(M)$ and the $s_j$ are cohereditary. Hence $s$ is cohereditary.
\end{proof}

\begin{defn}
Let $\Omega$ be an inverse poset and $\{t_j\}_{j \in \Omega}$ a set of \subsel s on $\cM$ such that if $i \leq j$, then $t_i\leq t_j$. Define $\displaystyle \varprojlim_{j \in \Omega} t_j$ by 
\[\left(\displaystyle \varprojlim_j t_j\right)(M)=\bigcap_{j \in \Omega} t_j(M).\]
\end{defn}

\begin{prop}\label{pr:invlim}
$t := \displaystyle \varprojlim_{j \in \Omega} t_j$ is a \subsel.  Moreover, it is order-preserving (resp. \surfuncss, resp. \funcss, resp. hereditary) if all the $t_j$ have the corresponding property.
\end{prop}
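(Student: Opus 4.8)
The plan is to mirror the proof of Proposition~\ref{pr:dirlim}, replacing the unions/sums appearing there by intersections and reversing the direction of each containment that pushes an element \emph{into} one of the $t_j$. The one elementary fact to keep in mind is that an arbitrary function $f$ satisfies only $f(\bigcap_j S_j) \subseteq \bigcap_j f(S_j)$, whereas a \emph{bijection} satisfies equality, and preimages always commute with arbitrary intersections. This asymmetry is precisely why the inverse limit is well behaved for the one-sided (\surfuncss) property and for the functorial and hereditary properties, and it is the only point at which any care is needed; I do not expect a genuine obstacle.

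First I would verify that $t$ is a \subsel. As an intersection of submodules of $M$, the set $t(M) = \bigcap_{j \in \Omega} t_j(M)$ is again a submodule of $M$. For the isomorphism condition, if $\phi : M \to N$ is an isomorphism in $\cM$ then $\phi$ is in particular a bijection, so
\[
\phi(t(M)) = \phi\Bigl(\bigcap_{j} t_j(M)\Bigr) = \bigcap_{j} \phi(t_j(M)) = \bigcap_{j} t_j(N) = t(N),
\]
where the third equality uses the isomorphism condition for each $t_j$.

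Next come the three ``positive'' properties. If every $t_j$ is order-preserving, then for a submodule inclusion $L \subseteq M$ in $\cM$ we have $t(L) = \bigcap_j t_j(L) \subseteq \bigcap_j t_j(M) = t(M)$. If every $t_j$ is \surfuncss, then for a surjection $\pi : M \onto N$ in $\cM$ we have $\pi(t(M)) = \pi(\bigcap_j t_j(M)) \subseteq \bigcap_j \pi(t_j(M)) \subseteq \bigcap_j t_j(N) = t(N)$, where the first inclusion is the elementary fact recalled above and the second holds because each $t_j$ is \surfuncss. Functoriality of $t$ is then the conjunction of these two facts, by the definition of a functorial \subsel.

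Finally, suppose every $t_j$ is hereditary; by Definition/Proposition~\ref{def:heredcohered} each $t_j$ is then functorial, so $t$ is functorial by the previous paragraph. Using characterization~(2) of hereditariness in Definition/Proposition~\ref{def:heredcohered}, it suffices to check $t(L) = L \cap t(M)$ for every submodule inclusion $L \subseteq M$ in $\cM$; but intersection distributes over intersection, so $L \cap t(M) = L \cap \bigcap_j t_j(M) = \bigcap_j (L \cap t_j(M)) = \bigcap_j t_j(L) = t(L)$, the third equality being hereditariness of each $t_j$. This finishes the plan; every step is a one-line manipulation of intersections, and the only thing worth flagging is that cohereditariness (and even the \radss\ property) need \emph{not} pass to an inverse limit, since $\bigcap_j (L + t_j(M))$ and $L + \bigcap_j t_j(M)$ generally differ --- which is why those properties are absent from the statement.
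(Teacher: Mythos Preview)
Your proof is correct and follows essentially the same approach as the paper's. The paper handles the \subsel, order-preserving, and \surfuncss\ cases in a single element-chase (pick $x \in t(L)$, note $x \in t_j(L)$ for all $j$, apply the relevant map), while you phrase the same argument at the level of sets via $f(\bigcap_j S_j) \subseteq \bigcap_j f(S_j)$; the hereditary case is literally identical to the paper's.
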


\begin{proof}
To see that $t$ is a \subsel\ (resp. order-preserving, resp. \surfuncss) when all the $t_j$ have the corresponding property, let $g: L \rightarrow M$ be an isomorphism (resp. injection, resp surjection) in $\cM$.  Let $x\in t(L)$.  Then for all $j$, $x\in t_j(L)$.  By the given property for all the $t_j$ and for $g$, we have $g(x) \in t_j(M)$ for all $j$.  Hence $g(x) \in \bigcap_j t_j(M) = t(M)$.

Suppose all of the $t_j$ are hereditary, i.e. for any submodule inclusion $L \subseteq M$ in $\cM$, $t_j(L)=t_j(M) \cap L$. Then
\[t(L)=\bigcap_{j} t_j(L)=\bigcap_j (t_j(M) \cap L)=\left(\bigcap_j t_j(M)\right) \cap L=t(M) \cap L,\]
so $t$ is also hereditary.
\end{proof}

Next, we interface with our notion of duality. For the rest of this section, $R$ is a commutative complete local Noetherian ring.

\begin{prop}\label{pr:dualreverse}
Let $\alpha$, $\beta$ be \subsel s on $\cM$ with $\alpha \leq \beta$.  Then $\alpha^\dual \geq \beta^\dual$.
\end{prop}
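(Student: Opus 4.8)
The plan is to reduce the statement to the exactness of Matlis duality, or — even more cheaply — to read it off directly from the alternate description of $\alpha^\dual$ proved in Theorem~\ref{alternatedualdescription}. Either route is short; I will sketch both so the cleanest one can be kept.

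First I would fix $M \in \cM$ (note $M \cong M^{\vee\vee}$, so $M^\vee \in \cM$ as well, and $\alpha, \beta$ are defined on $M^\vee$). Since $\alpha \le \beta$ we have the submodule inclusion $\alpha(M^\vee) \subseteq \beta(M^\vee)$ inside $M^\vee$, hence a canonical surjection $q: M^\vee/\alpha(M^\vee) \onto M^\vee/\beta(M^\vee)$. Applying the exact contravariant functor ${}^\vee$ turns $q$ into an injection $q^\vee: (M^\vee/\beta(M^\vee))^\vee \hookrightarrow (M^\vee/\alpha(M^\vee))^\vee$, i.e. $\beta^\dual(M) \hookrightarrow \alpha^\dual(M)$. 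Because $q$ is the canonical quotient map and all the identifications used to regard $\alpha^\dual(M)$ and $\beta^\dual(M)$ as submodules of $M$ (as in the Definition and in Theorem~\ref{alternatedualdescription}) are natural, this injection is the inclusion of one submodule of $M$ into another; hence $\beta^\dual(M) \subseteq \alpha^\dual(M)$. As $M$ was arbitrary, $\alpha^\dual \ge \beta^\dual$.

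Alternatively, and with no naturality bookkeeping at all, I would invoke Theorem~\ref{alternatedualdescription}: $\beta^\dual(M) = \{z \in M : g(z) = 0 \text{ for all } g \in \beta(M^\vee)\}$ and $\alpha^\dual(M) = \{z \in M : g(z) = 0 \text{ for all } g \in \alpha(M^\vee)\}$. Since $\alpha(M^\vee) \subseteq \beta(M^\vee)$, every $z$ satisfying the $\beta$-condition a fortiori satisfies the $\alpha$-condition, so $\beta^\dual(M) \subseteq \alpha^\dual(M)$ as submodules of $M$, for every $M$; thus $\alpha^\dual \ge \beta^\dual$.

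I do not anticipate any real obstacle here: the content is entirely the order-reversing behavior of $\mathrm{Hom}(-,E)$. The only point that deserves a sentence is the passage from an abstract injection to an honest containment of submodules of $M$, and that is precisely what the explicit formula of Theorem~\ref{alternatedualdescription} makes transparent, so I would phrase the final writeup using that characterization.
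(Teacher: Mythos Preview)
Your first approach is essentially identical to the paper's proof: take $M$, use $\alpha(M^\vee)\subseteq\beta(M^\vee)$ to obtain the canonical surjection $M^\vee/\alpha(M^\vee)\onto M^\vee/\beta(M^\vee)$, dualize to an injection, and read off $\beta^\dual(M)\subseteq\alpha^\dual(M)$. (One cosmetic point: since $\alpha^\dual$ lives on $\cM^\vee$, you should take $M\in\cM^\vee$ rather than $\cM$, as the paper does.) Your second route via Theorem~\ref{alternatedualdescription} is a perfectly good alternative that sidesteps the naturality remark, but it is not needed---the paper is content with the one-line Matlis-duality argument.
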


\begin{proof}
Let $M \in \cM^\vee$.  Then $\alpha(M^\vee) \subseteq   \beta(M^\vee)$, so that $M^\vee / \alpha(M^\vee) \onto M^\vee / \beta(M^\vee)$.  Applying Matlis duality, it follows that $\beta^\dual(M) = (M^\vee / \beta(M^\vee))^\vee \subseteq (M^\vee / \alpha(M^\vee))^\vee = \alpha^\dual(M)$.
\end{proof}

\begin{defn} If $\Gamma$ is a poset, let $\Gamma'$ denote its poset dual. \end{defn}

\begin{prop}\label{pr:duallim1}
Let $\Gamma$ be a directed poset and $\{s_j\}_{j \in \Gamma}$ a directed system of submodule selectors. Then $\displaystyle \bigg(\varinjlim_{j \in \Gamma} s_j\bigg)^\dual = \displaystyle \varprojlim_{j \in \Gamma'} (s_j^\dual)$.
\end{prop}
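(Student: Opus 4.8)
The plan is to avoid any delicate limit-exactness of Matlis duality and instead reduce everything, pointwise, to the explicit formula $\alpha^\dual(M)=\bigcap_{g\in\alpha(M^\vee)}\ker g$ from Theorem~\ref{alternatedualdescription}, exploiting the fact that a direct limit of submodule selectors is literally a union.

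\textbf{Steps.} First I would check that $\{s_j^\dual\}_{j\in\Gamma'}$ is an inverse system in the sense demanded by the definition of $\varprojlim$, namely that $i\le j$ in $\Gamma'$ forces $s_i^\dual\le s_j^\dual$. Indeed $i\le j$ in $\Gamma'$ means $j\le i$ in $\Gamma$, hence $s_j\le s_i$, and Proposition~\ref{pr:dualreverse} gives $s_j^\dual\ge s_i^\dual$; so $\varprojlim_{j\in\Gamma'}(s_j^\dual)$ is legitimately defined, and for $M\in\cM^\vee$ it is by definition $\bigcap_{j\in\Gamma}s_j^\dual(M)$. Next, fix $M\in\cM^\vee$ (so $M^\vee\in\cM$) and put $s:=\varinjlim_{j\in\Gamma}s_j$. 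Since $\Gamma$ is directed and $i\le j\implies s_i\le s_j$, the submodule $s(M^\vee)=\sum_j s_j(M^\vee)$ is just the increasing union $\bigcup_j s_j(M^\vee)$ inside $M^\vee=\Hom_R(M,E)$. Applying Theorem~\ref{alternatedualdescription} to $s$ and to each $s_j$, together with the trivial identity $\bigcap_{g\in\bigcup_j A_j}\ker g=\bigcap_j\bigcap_{g\in A_j}\ker g$, yields
\[
s^\dual(M)=\bigcap_{g\in s(M^\vee)}\ker g=\bigcap_{g\in\bigcup_j s_j(M^\vee)}\ker g=\bigcap_j\Big(\bigcap_{g\in s_j(M^\vee)}\ker g\Big)=\bigcap_j s_j^\dual(M).
\]
By the first step this last intersection is $\big(\varprojlim_{j\in\Gamma'}(s_j^\dual)\big)(M)$, and since $M\in\cM^\vee$ was arbitrary we get $\big(\varinjlim_{j\in\Gamma}s_j\big)^\dual=\varprojlim_{j\in\Gamma'}(s_j^\dual)$ as submodule selectors on $\cM^\vee$. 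No extra verification that the two sides are submodule selectors is needed: the left is one because $\dual$ sends $S(\cM)$ to $S(\cM^\vee)$ and $s\in S(\cM)$ by Proposition~\ref{pr:dirlim}, and the right is one by Proposition~\ref{pr:invlim}.

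\textbf{Main obstacle.} I do not expect a genuine obstruction here; the argument is short once Theorem~\ref{alternatedualdescription} is in hand. The only points that require attention are the bookkeeping between $\Gamma$ and its poset dual $\Gamma'$ when verifying the inverse-system hypothesis, and the fact that one must feed $M^\vee$ (which lies in $\cM$), not $M$, into the selectors $s_j$. One could alternatively argue via $M^\vee/\sum_j s_j(M^\vee)=\varinjlim_j M^\vee/s_j(M^\vee)$ and the statement that $(-)^\vee$ turns direct limits into inverse limits, but that would require importing exactness and limit properties of Matlis duality that the present argument sidesteps entirely.
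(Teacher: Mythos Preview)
Your proposal is correct and follows essentially the same route as the paper: both arguments invoke Theorem~\ref{alternatedualdescription} to write $s^\dual(M)=\bigcap_{g\in s(M^\vee)}\ker g$, use that $s(M^\vee)=\bigcup_j s_j(M^\vee)$, and then distribute the intersection over the union to obtain $\bigcap_j s_j^\dual(M)$. Your write-up is slightly more explicit about verifying the inverse-system hypothesis via Proposition~\ref{pr:dualreverse} and about where $M$ and $M^\vee$ live, but the underlying argument is the same.
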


\begin{proof}
$\Gamma'$ is an inverse poset, and $\{s_j^\dual\}_{j \in \Gamma}$ is an inverse system of submodule selectors. Set $t_j=s_j^\dual$. Let $s=\varinjlim_{j \in \Gamma} s_j$ and let $z \in M$. Then $z \in s^\dual(M)$ if and only if for all $g \in s(M^\vee)$, $z \in \ker(g)$. This holds if and only if for all $g \in \cup_j s_j^\dual(M^\vee)$, $z \in \ker(g)$, if and only if for all $j$, $g \in s_j(M^\vee)$, $z \in \ker(g)$. This is true if and only if for all $j$, $z \in s_j^\dual(M)$, $z \in \cap_j s_j^\dual(M)=(\varprojlim_j s_j^\dual)(M)$.
\end{proof}

\begin{prop}\label{pr:duallim2}
Let $\Omega$ be an inverse poset and  $\{t_j\}_{j \in \Omega}$ an inverse system of submodule selectors. Then $\displaystyle\bigg(\varprojlim_j t_j\bigg)^\dual=\varinjlim_{j \in \Omega'} (t_j^\dual)$.
\end{prop}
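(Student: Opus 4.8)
The plan is to mirror the proof of Proposition~\ref{pr:duallim1}, exploiting the symmetry $\alpha^{\dual\dual}=\alpha$ from Theorem~\ref{thm:smsdual}(\ref{it:duality}) so that an inverse system and its dual direct system are interchangeable. Concretely, set $s_j := t_j^\dual$ for each $j \in \Omega$. First I would check that $\{s_j\}_{j \in \Omega'}$ is a directed system of submodule selectors: $\Omega'$ is a directed poset since $\Omega$ is an inverse poset, and if $i \le j$ in $\Omega$ then $t_i \le t_j$, so by Proposition~\ref{pr:dualreverse} we get $t_i^\dual \ge t_j^\dual$, i.e. $s_j \le s_i$; reading this in $\Omega'$ (where the order is reversed) says exactly that $i' \le j'$ implies $s_{i'} \le s_{j'}$, as required. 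Thus $s := \varinjlim_{j \in \Omega'} s_j$ is defined, and by Proposition~\ref{pr:duallim1} applied to the directed poset $\Omega'$ we have $s^\dual = \varprojlim_{j \in (\Omega')'} s_j^\dual = \varprojlim_{j \in \Omega} t_j^{\dual\dual} = \varprojlim_{j \in \Omega} t_j = t$, using $(\Omega')' = \Omega$ and Theorem~\ref{thm:smsdual}(\ref{it:duality}). Dualizing both sides and applying Theorem~\ref{thm:smsdual}(\ref{it:duality}) once more gives $t^\dual = s^{\dual\dual} = s = \varinjlim_{j \in \Omega'} t_j^\dual$, which is the claim.

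Alternatively, and perhaps more transparently, I would argue directly as in Proposition~\ref{pr:duallim1} using the characterization of $\dual$ from Theorem~\ref{alternatedualdescription}. For $z \in M$, we have $z \in t^\dual(M)$ if and only if $g(z) = 0$ for all $g \in t(M^\vee) = \bigcap_j t_j(M^\vee)$. The subtlety here, which is the one genuine point to be careful about, is that $t^\dual$ of an intersection is \emph{not} literally computed from the condition "kills every element of the intersection" in a way that immediately unwinds to a union of the $t_j^\dual$ — unlike the union case in Proposition~\ref{pr:duallim1}, where "kills every $g$ in $\bigcup_j t_j(M^\vee)$" splits cleanly as "for all $j$, kills every $g \in t_j(M^\vee)$." So the honest route is the one in the first paragraph: deduce the inverse-limit statement formally from the direct-limit statement already proved, rather than re-deriving it elementwise.

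The main obstacle is therefore not computational but organizational: one must resist the temptation to "dualize the proof of Proposition~\ref{pr:duallim1} line by line," since the elementwise argument genuinely uses that a union of kernels-conditions decouples over the index set, whereas an intersection does not. Routing everything through the already-established Proposition~\ref{pr:duallim1} together with the involutivity $\dual\dual = \mathrm{id}$ and the poset-dual identity $(\Omega')' = \Omega$ sidesteps this entirely and makes the proof a short formal manipulation. I would also remark, for completeness, that by Proposition~\ref{pr:invlim} and Proposition~\ref{pr:dirlim} the relevant good properties (order-preservation, surjection-functoriality, functoriality) are preserved on both sides, so the displayed equality is an equality of submodule selectors sharing whatever structure the $t_j$ have.
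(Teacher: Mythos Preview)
Your first paragraph is correct and is essentially the paper's own proof: the paper also sets $s_j = t_j^\dual$, observes that $\{s_j\}_{j\in\Omega'}$ is a direct system, rewrites $\varprojlim_j t_j = \varprojlim_j s_j^\dual$, applies Proposition~\ref{pr:duallim1} to identify this with $(\varinjlim_j s_j)^\dual$, and then cancels the double dual. Your added checks (that $\Omega'$ is directed, that Proposition~\ref{pr:dualreverse} gives the order reversal) and your remarks about why the elementwise route fails are extra but harmless commentary.
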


\begin{proof}
Write $s_j=t_j^\dual$. Then $\{s_j\}_{j \in \Omega'}$ form a direct limit system. So $(\varprojlim_j t_j)^\dual=(\varprojlim_j (s_j^\dual))^\dual$. By Proposition~\ref{pr:duallim1}, this is equal to $((\varinjlim_j s_j)^\dual)^\dual=\varinjlim_j s_j=\varinjlim_j t_j^{\dual}$.
\end{proof}

Note that if $N$ is an $R$-module and $\{N_j\}_{j \in \Gamma}$ is a directed system of $R$-modules such that $\varinjlim N_j=N$, it is not always the case that $\varprojlim t_{N_j}=t_N$ when $t$ is the trace map defined in Section \ref{sec:trace}. So there are examples where $t_N^\dual \ne \varinjlim t_{N_j}^\dual$. See Example \ref{ex:tracebreakslimits}.

We do get one inclusion when $t$ is trace: $t_N \le \varprojlim_\alpha t_{N_j}$. See Proposition \ref{pr:directedtrace}.

\section{Special cases: trace, torsion, completion, and module closures}\label{sec:trace}

In this section we apply the structure of the preceding sections to traces, torsion submodules, and module closures, all examples of preradicals and residual operations that appear elsewhere in the literature. The results on the trace are inspired by work of the second named author in \cite{PeRG}.

\subsection*{Traces}

Let $R$ be a commutative ring with identity.

\begin{defn}
Let $L$ be an $R$-module. Then the \emph{$L$-trace} of an $R$-module $N$ is 
\[\tr_L(N)=\im(L \otimes \Hom_R(L,N) \to N),\]
where $\ell \otimes f \mapsto f(\ell)$. Equivalently, $\tr_L(N)$ is the submodule of $N$ generated by the set
\[\{f(\ell) \mid f \in \Hom(L,N), \ell \in L\},\]
or
\[\tr_L(N) = \sum_{f \in \Hom_R(L,N)} f(L).\]

When $N=R$, this is known as the trace ideal of $L$ \cite{Lam99,Lin17}.

Given a subset $S \subseteq L$, the \emph{$(S,L)$-trace} $\tr_{S,L}(N)$ of $N$ is the $R$-submodule of $N$ generated by the set \[
\{f(s) \mid f \in \Hom(L,N), s \in S\}.
\]  That is, \[
\tr_{S,L}(N) = \sum_{f \in \Hom_R(L,N)}  Rf(S).
\]

Hence, $\tr_L = \tr_{L,L}$. If $S = \{x\}$ is a singleton, we write $\tr_{x,L} := \tr_{\{x\},L}$.

The ideal $\tr_{x,L}(R)$ is known as the order ideal of $x \in L$ \cite{EvGr-Ord}.

If the ring need be specified, we note it in the superscript, as follows: \[\tr_{X,L}^{(R)}.\]
\end{defn}

\begin{rem}\label{rem:tracefacts}
We note the following easy facts.
\begin{enumerate}
    \item For any $R$-module $L$ and any subset $S \subseteq L$, $\tr_{S,L}$ is a submodule selector on $\cM$.  In particular, $\tr_{x,L}$ and $\tr_L = \tr_{L,L}$ are submodule selectors.
    \item When $L=A$ is an $R$-algebra, $\tr_A = \tr_{1,A}$.
    \item When $I$ is an ideal of $R$, $\tr_{R/I}(N) = (0 :_N I)$.
    \item It is elementary that for any $z\in N$, $Rz \cong R/\ann_R(z)$.  Hence by (3), $\tr_{Rz}(N) = \ann_N (\ann_R(z))$.
    \item When $I$ is an ideal of $R$ and $L,N$ are $R$-modules, we have $\tr_{IL,L}(N) = I\tr_L(N)$.  In particular, $\tr_{I,R}(N) = IN$.
    \item If $L$ is an $R$-module and $x\in L$, then $\tr_{x,L} \leq \tr_{Rx}$: if $z\in \tr_{x,L}(M)$, then there is some $R$-linear $g: L \rightarrow M$ with $g(x)=z$; composing this with the inclusion $i: Rx \rightarrow M$, we have $z=(g\circ i)(x)$, so that $z\in \tr_{Rx}(M)$.
    \item However, it is not always the case that $\tr_{Rx} \le \tr_{x,L}$.  Let $R=M=k[\![t]\!]$, $x=1$, and $L=k(\!(t)\!)$, the fraction field of $R$.  Then since $\Hom_R(L,M)=0$, we have $\tr_{x,L}(M)=0$.  But since $Rx$ is a free $R$-module, we have $\tr_{Rx}(M) = M$.
\end{enumerate}
\end{rem}

\begin{lemma}\label{lem:trsums}
Let $L$ be an $R$-module, $S \subseteq L$ a subset, and $RS$ the submodule of $L$ generated by $S$. Then: \begin{enumerate}[(a)]
    \item $\tr_{S,L} = \tr_{RS,L}$, and 
    \item Given $M \in \cM$ and $z\in M$, we have $z\in \tr_{S,L}(M)$ if and only if there exists a positive integer $k$, $R$-linear maps $g_i: L \ra M$, and elements $s_i\in S$ such that $z=\sum_{i=1}^k g_i(s_i)$.
\end{enumerate}
\end{lemma}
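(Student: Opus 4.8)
The plan is to prove (a) and (b) essentially together, since (b) is a concrete description of membership in $\tr_{S,L}(M)$ and (a) follows by comparing the generating sets. For part (a), the containment $\tr_{S,L} \leq \tr_{RS,L}$ is immediate because $S \subseteq RS$, so every generator $f(s)$ with $s \in S$ is among the generators $f(\ell)$ with $\ell \in RS$. For the reverse containment, I would take a typical generator $f(\ell)$ of $\tr_{RS,L}(M)$ with $f \colon L \to M$ $R$-linear and $\ell \in RS$, write $\ell = \sum_{i} r_i s_i$ with $r_i \in R$ and $s_i \in S$, and observe that $f(\ell) = \sum_i r_i f(s_i)$, which lies in the $R$-submodule generated by $\{f(s) \mid s \in S, f \in \Hom_R(L,M)\}$, i.e. in $\tr_{S,L}(M)$. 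Hence the two submodule selectors agree on every $M \in \cM$.

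For part (b), the ``if'' direction is trivial: if $z = \sum_{i=1}^k g_i(s_i)$ with $g_i \colon L \to M$ linear and $s_i \in S$, then each $g_i(s_i)$ is a generator of $\tr_{S,L}(M)$, so $z \in \tr_{S,L}(M)$. For the ``only if'' direction, recall that $\tr_{S,L}(M) = \sum_{f \in \Hom_R(L,M)} Rf(S)$ is by definition the $R$-submodule of $M$ generated by $\{f(s) \mid f \in \Hom_R(L,M),\ s \in S\}$; so an arbitrary element $z$ is a finite $R$-linear combination $z = \sum_{i=1}^k r_i g_i(s_i)$ with $r_i \in R$, $g_i \in \Hom_R(L,M)$, $s_i \in S$. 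The point is then to absorb the scalar $r_i$ into the map: set $g_i' := r_i g_i \colon L \to M$, which is again $R$-linear since $R$ is commutative, and note $r_i g_i(s_i) = (r_i g_i)(s_i) = g_i'(s_i)$. Thus $z = \sum_{i=1}^k g_i'(s_i)$ is of the desired form.

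I do not anticipate a genuine obstacle here; the only thing to be careful about is the commutativity of $R$, which is what allows $r \cdot (g(s))$ to be rewritten as $(rg)(s)$ with $rg$ still an $R$-module homomorphism — this is exactly where the standing hypothesis that $R$ is commutative in this subsection is used. One could also phrase (b) slightly more economically by first invoking (a) to replace $S$ by $RS$, but it is cleaner to prove (b) directly from the definition as above and then note that (a) is the special case comparing the generating sets. Everything else is bookkeeping with finite sums.
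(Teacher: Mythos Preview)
Your proof is correct and follows essentially the same approach as the paper: for (a) you expand elements of $RS$ as $R$-linear combinations of elements of $S$ and use linearity of $f$, and for (b) you absorb the scalars $r_i$ into the maps by replacing $g_i$ with $r_i g_i$. The only cosmetic difference is that in (a) you check the containment on generators while the paper writes out a general element, but the content is the same.
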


\begin{proof}
Since $S \subseteq RS$, $\tr_{S,L} \le \tr_{RS,L}$. Let $z \in \tr_{RS,L}(M)$. Then there exist $g_i:L \to M$, elements $z_i \in RS$, and $r_i \in R$ such that $z=\sum_i r_ig_i(z_i)$. Each $z_i=\sum_j a_{ij}s_j$ where the $s_j \in S$. Hence
\[z=\sum_{i,j} a_{ij}r_ig_i(s_j) \in \tr_{S,L}(M).\]

For (b), the ``if'' direction holds by definition.  Conversely, let $z \in \tr_{S,L}(M)$. Then there exist $r_i \in R$, $h_i:L \to M$ $R$-linear, and $s_i \in S$ such that $z=\sum_{i=1}^k r_ih_i(s_i)$. Define $g_i:L \to M$ by $g_i(x)=r_ih_i(x)$. Then the $g_i$ are $R$-linear maps $L \to M$ and
\[z=\sum_{i=1}^k g_i(s_i).
\qedhere \]
\end{proof}

\begin{lemma}\label{lem:algtrace}
Let $L$ be an $R$-module and $S$ a subset of $L$.  Then the \subsel\ $\tr_{S,L}$ is \funcss, i.e. a preradical, and $\tr_L$ is \idemss.
\end{lemma}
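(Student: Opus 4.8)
The plan is to verify the two claimed properties directly from the definitions, using Lemma \ref{lem:trsums} to reduce each statement to a manipulation of finite sums of the form $\sum g_i(s_i)$. For functoriality of $\tr_{S,L}$, I would take an arbitrary $R$-linear map $g\colon M \to N$ in $\cM$ and an element $z \in \tr_{S,L}(M)$. By Lemma \ref{lem:trsums}(b) we may write $z = \sum_{i=1}^k h_i(s_i)$ with $h_i\colon L \to M$ $R$-linear and $s_i \in S$. Then $g(z) = \sum_{i=1}^k (g\circ h_i)(s_i)$, and each $g \circ h_i\colon L \to N$ is $R$-linear, so again by Lemma \ref{lem:trsums}(b), $g(z) \in \tr_{S,L}(N)$. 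This shows $g(\tr_{S,L}(M)) \subseteq \tr_{S,L}(N)$, which is exactly the definition of \funcss, and a \funcss\ \subsel\ is by definition a preradical.

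For idempotency of $\tr_L$, I would show $\tr_L(\tr_L(M)) = \tr_L(M)$ for every $M \in \cM$. The inclusion $\tr_L(\tr_L(M)) \subseteq \tr_L(M)$ is immediate from $\tr_L(\tr_L(M)) \subseteq \tr_L(M)$ being a submodule of $\tr_L(M)$ (the submodule-selector property). For the reverse inclusion, let $z \in \tr_L(M)$; writing $z = \sum_{i=1}^k f_i(\ell_i)$ with $f_i\colon L \to M$ and $\ell_i \in L$, it suffices to show each $f_i(\ell_i) \in \tr_L(\tr_L(M))$. But $\tr_L(M)$ is by construction generated by all images $f(L)$ with $f\colon L \to M$, so each such $f$ factors through $\tr_L(M)$, i.e. $f$ corestricts to a map $\bar f\colon L \to \tr_L(M)$. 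Then $f_i(\ell_i) = \bar f_i(\ell_i) \in \tr_L(\tr_L(M))$, giving $\tr_L(M) \subseteq \tr_L(\tr_L(M))$.

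I do not anticipate a serious obstacle here; the only point requiring a little care is making sure that the corestriction argument for idempotency is legitimate inside the category $\cM$ — that is, that $\tr_L(M)$, being a submodule of $M \in \cM$, is itself an object of $\cM$, so that $\tr_L(\tr_L(M))$ is defined. This is guaranteed by the standing assumption that $\cM$ is closed under taking submodules. One should also note that idempotency of $\tr_L$ does not automatically extend to $\tr_{S,L}$ for a proper subset $S$, and indeed the lemma only claims it for $\tr_L = \tr_{L,L}$; the corestriction step genuinely uses that the "source elements" range over all of $L$.
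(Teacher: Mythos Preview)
Your proposal is correct and follows essentially the same approach as the paper's proof: both use Lemma~\ref{lem:trsums}(b) to write $z=\sum_i g_i(s_i)$ and compose with the given map for functoriality, and both establish idempotence by observing that any $f\colon L\to M$ has image inside $\tr_L(M)$ and hence corestricts to a map $L\to\tr_L(M)$. Your version is in fact slightly more careful than the paper's in the idempotence step, since you explicitly carry the finite sum through rather than speaking of a single $g$ and $q$ with $g(q)=z$; your remarks about $\cM$ being closed under submodules and about why the argument fails for general $\tr_{S,L}$ are also apt.
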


\begin{proof}
First we prove that $\tr_{S,L}$ is functorial. Let $f:M \to N$ be an $R$-linear map in $\cM$ and let $z \in \tr_{S,L}(M)$. Then by Lemma~\ref{lem:trsums}, there are $R$-linear maps $g_i:L \to M$ and elements $s_i \in S$ such that $z=\sum_i g_i(s_i)$. Then $f \circ g_i$ are $R$-linear maps from $L$ to $N$, and $\sum_i (f \circ g_i)(s_i)=f(z)$. Hence $f(z) \in \tr_{S,L}(N)$, as desired.

For idempotence, let $M \in \cM$ and $z \in \tr_L(M)$. Then there is a map $g:L \to M$ and an element $q \in L$ such that $g(q)=z$. The image of $g$ must be contained in $\tr_L(M)$, so we can view $g$ as a map $L \to \tr_L(M)$. Hence $z \in \tr_L(\tr_L(M)).$
\end{proof}

\begin{example}
In general, $\tr_{S,L}$ is not idempotent. For example, take $S$ to be an ideal $I$ and $L=R$. For any $R$-module $M$, we have $\tr_{I,R}(M)=IM$. In particular, $\tr_{I,R}(R)=I$, so this submodule selector is idempotent if and only if $I=I^2$.
\end{example}

\begin{rem}
\label{rem:localcohom}
Next, we apply the limits from Section~\ref{sec:lim} to traces.  In particular, given a map $g: L \rightarrow L'$ and $X \subseteq L$, then it is easy to see that $\tr_{g(X), L'} \leq \tr_{X,L}$.  In particular, if $S \rightarrow S'$ is a map of $R$-algebras, then $\tr_{S'} \leq \tr_S$.  Hence, an \emph{inverse} system of algebras leads to a \emph{direct} system of traces.  In particular, if $I$ is an ideal of $R$, then we have natural surjections $R/I^{n+1} \onto R/I^n$, and we have \[
(\varinjlim \tr_{R/I^n})(M) = \bigcup_{n\in \N} (0 :_M I^n) = H^0_I(M).
\]
\end{rem}

The following condition helps us relate different traces of the same module.

\begin{defn}
Let $L$ and $M$ be $R$-modules. We say that $L$ generates $M$ if some direct sum of copies of $L$ surjects onto $M$. In particular, $L$ generates $M$ if there is a surjection $L \twoheadrightarrow M$.
\end{defn}

The following lemma is well-known, but appears in particular in \cite[Proposition 2.8]{Lin17}.

\begin{lemma}
If $L \twoheadrightarrow M$ is a surjection of $R$-modules, or more generally if $L$ generates $M$, then $\tr_M(N) \le \tr_L(N)$.
\end{lemma}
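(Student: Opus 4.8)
The plan is to reduce the general ``$L$ generates $M$'' case to the surjective case and then do the surjective case directly by composing homomorphisms. Observe first that if $L$ generates $M$, there is a surjection $\pi \colon L^{(\Lambda)} \twoheadrightarrow M$ for some index set $\Lambda$, so it suffices to prove two things: (i) $\tr_M(N) \le \tr_{L^{(\Lambda)}}(N)$ whenever there is a surjection $L^{(\Lambda)} \twoheadrightarrow M$, and (ii) $\tr_{L^{(\Lambda)}}(N) \le \tr_L(N)$. Granting these, we get $\tr_M(N) \le \tr_{L^{(\Lambda)}}(N) \le \tr_L(N)$, which is what we want. So really there are two elementary containments to check, and both come straight from the definition $\tr_L(N) = \sum_{f \in \Hom_R(L,N)} f(L)$ together with Lemma~\ref{lem:trsums}(b).

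For the surjective case, suppose $\pi \colon P \twoheadrightarrow M$ is a surjection of $R$-modules (I will apply this with $P = L$ and with $P = L^{(\Lambda)}$). Let $z \in \tr_M(N)$. By Lemma~\ref{lem:trsums}(b) there are $R$-linear maps $h_i \colon M \to N$ and elements $m_i \in M$ with $z = \sum_{i=1}^k h_i(m_i)$. Since $\pi$ is surjective, choose $p_i \in P$ with $\pi(p_i) = m_i$. Then $h_i \circ \pi \colon P \to N$ is $R$-linear and $(h_i \circ \pi)(p_i) = h_i(m_i)$, so $z = \sum_{i=1}^k (h_i \circ \pi)(p_i) \in \tr_P(N)$ by Lemma~\ref{lem:trsums}(b) again. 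Hence $\tr_M(N) \le \tr_P(N)$.

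For containment (ii), note that $L$ sits inside $L^{(\Lambda)}$ as the first summand, and more usefully each summand inclusion $\iota_\lambda \colon L \hookrightarrow L^{(\Lambda)}$ together with the projection $\rho_\lambda \colon L^{(\Lambda)} \twoheadrightarrow L$ lets us break up maps out of $L^{(\Lambda)}$: if $z \in \tr_{L^{(\Lambda)}}(N)$, write $z = \sum_{i=1}^k g_i(x_i)$ with $g_i \colon L^{(\Lambda)} \to N$ and $x_i \in L^{(\Lambda)}$; each $x_i$ has finitely many nonzero coordinates, so $x_i = \sum_{\lambda \in F_i} \iota_\lambda(x_{i,\lambda})$ for a finite set $F_i$, and then $g_i(x_i) = \sum_{\lambda \in F_i} (g_i \circ \iota_\lambda)(x_{i,\lambda})$ with each $g_i \circ \iota_\lambda \colon L \to N$ $R$-linear. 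Thus $z$ is a finite sum of elements of the form $(\text{map } L \to N)(\text{element of }L)$, so $z \in \tr_L(N)$ by Lemma~\ref{lem:trsums}(b). Alternatively, one can invoke Remark~\ref{rem:localcohom}'s observation that a map $g \colon L \to L'$ and subset $X \subseteq L$ give $\tr_{g(X),L'} \le \tr_{X,L}$, applied appropriately, but the direct coordinate argument is cleanest.

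There is really no hard part here; the only thing to be careful about is the bookkeeping when $\Lambda$ is infinite — one must use that every element of a direct sum has finite support, and that traces are computed via \emph{finite} sums (Lemma~\ref{lem:trsums}(b)), so that no convergence or infinite-sum issue arises. Combining the surjective case (applied once with $P = L^{(\Lambda)}$) with containment (ii) completes the proof.
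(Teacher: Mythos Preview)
Your proof is correct. The paper does not actually supply a proof of this lemma; it merely states that the result is well-known and cites \cite[Proposition 2.8]{Lin17}. Your argument---reducing to a surjection $L^{(\Lambda)} \twoheadrightarrow M$, handling the surjective case by precomposition, and then splitting elements of $L^{(\Lambda)}$ into finitely many coordinates to land in $\tr_L(N)$---is a clean, self-contained elementary proof that fills in what the paper omits.
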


As a consequence, we get the following result concerning traces with respect to chains of increasing cyclic modules.

\begin{lemma}
\label{lem:seqtrace}
Let $L$ be an $R$-module and $\{z_n\}_{n \in \N} \subseteq L$. If 
\[Rz_1 \subseteq Rz_2 \subseteq \ldots \subseteq Rz_n \subseteq \ldots,\]
then $\tr_{Rz_j} \le \tr_{Rz_{j+1}}$ for all $j \in \N$.
\end{lemma}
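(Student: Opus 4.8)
The plan is to reduce the statement immediately to the preceding lemma by exhibiting, for each $j$, a surjection $Rz_{j+1} \twoheadrightarrow Rz_j$.

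First I would fix $j \in \N$. Since $z_j \in Rz_j \subseteq Rz_{j+1}$, I can write $z_j = r\,z_{j+1}$ for some $r \in R$. Next I would check that multiplication by $r$ descends to a well-defined $R$-linear map $\mu: Rz_{j+1} \to L$ determined by $z_{j+1} \mapsto r z_{j+1} = z_j$; equivalently, I would observe that $\ann_R(z_{j+1}) \subseteq \ann_R(z_j)$, since $s z_{j+1} = 0$ forces $s z_j = s r z_{j+1} = r(s z_{j+1}) = 0$, and then use the identification $Rz_n \cong R/\ann_R(z_n)$. Either way, the image of $\mu$ is $R(rz_{j+1}) = Rz_j$, so $\mu$ restricts to a surjection $Rz_{j+1} \twoheadrightarrow Rz_j$.

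Finally I would apply the preceding lemma with $L$ there taken to be $Rz_{j+1}$ and $M$ there taken to be $Rz_j$: since $Rz_{j+1}$ surjects onto (hence generates) $Rz_j$, we get $\tr_{Rz_j}(N) \le \tr_{Rz_{j+1}}(N)$ for every $R$-module $N \in \cM$, which is precisely $\tr_{Rz_j} \le \tr_{Rz_{j+1}}$.

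There is essentially no obstacle here; the only point requiring the smallest bit of care is the well-definedness of the surjection $Rz_{j+1} \twoheadrightarrow Rz_j$, which is handled by the annihilator containment noted above (this is exactly the elementary fact $Rz \cong R/\ann_R(z)$ recorded in Remark~\ref{rem:tracefacts}(4)). Everything else is a direct invocation of the prior lemma.
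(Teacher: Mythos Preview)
Your proposal is correct and follows essentially the same approach as the paper: both arguments use the annihilator containment $\ann_R(z_{j+1}) \subseteq \ann_R(z_j)$ to produce a surjection (the paper phrases it as $R/\ann_R(z_{j+1}) \twoheadrightarrow R/\ann_R(z_j)$ via Remark~\ref{rem:tracefacts}(4), you phrase it as $Rz_{j+1} \twoheadrightarrow Rz_j$ directly) and then invoke the preceding lemma. The only difference is cosmetic.
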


\begin{proof}
For all $j \in \N$, $\ann_R (z_j) \supseteq \ann_R (z_{j+1})$. Hence we have surjections 
\[R/\ann_R(z_{j+1}) \twoheadrightarrow R/\ann_R(z_j).\]
By Remark \ref{rem:tracefacts}, $\tr_{R_{z_j}}=\tr_{R/\ann_R(z_j)}$, and hence the result follows.
\end{proof}

The next result was mentioned at the end of Section \ref{sec:lim}.

\begin{prop}
\label{pr:directedtrace}
Let $\Gamma$ be a directed poset and $\{L_j\}_{j \in \Gamma}$ a set of $R$-modules such that $\tr_{L_i} \ge \tr_{L_j}$ for all $i \le j$. In particular, it may be the case that $L_i$ generates $L_j$ whenever $i \le j$, or $\{L_j\}$ may be a set of $R$-algebras, with $R$-algebra maps $L_i \to L_j$ whenever $i \le j$. Let $L=\varinjlim_j L_j$. Then $\tr_L \le \varprojlim_j \tr_{L_j}$.
\end{prop}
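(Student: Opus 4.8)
The plan is to show directly that for every module $M \in \cM$, we have $\tr_L(M) \subseteq \bigcap_j \tr_{L_j}(M)$, which by definition of $\varprojlim$ is exactly the claim. So fix $M$ and fix an index $j_0 \in \Gamma$; it suffices to show $\tr_L(M) \subseteq \tr_{L_{j_0}}(M)$.

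The key observation is that there is a canonical map $\phi_{j_0}: L_{j_0} \to L = \varinjlim_j L_j$ coming from the direct limit structure. I claim this gives $\tr_L \le \tr_{L_{j_0}}$ as submodule selectors, via the general principle noted in Remark~\ref{rem:localcohom}: given any $R$-linear map $g: A \to B$ and a subset $X \subseteq A$, one has $\tr_{g(X),B} \le \tr_{X,A}$, because any homomorphism $B \to M$ precomposes with $g$ to give a homomorphism $A \to M$ realizing the same elements. Here I would apply this with $A = L_{j_0}$, $B = L$, $X = L_{j_0}$ (all of it), and $g = \phi_{j_0}$: this yields $\tr_{\phi_{j_0}(L_{j_0}),\, L} \le \tr_{L_{j_0}}$. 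To finish I would need $\tr_L \le \tr_{\phi_{j_0}(L_{j_0}),\, L}$, i.e. that every element of $\tr_L(M)$ is already obtained using homomorphisms evaluated on elements in the image of $\phi_{j_0}$. This is where the direct limit hypothesis does the real work: given $z \in \tr_L(M)$, write $z = \sum_{i=1}^k g_i(\ell_i)$ with $g_i: L \to M$ and $\ell_i \in L$ (using Lemma~\ref{lem:trsums}(b)); since $L = \varinjlim_j L_j$ and $\Gamma$ is directed, there is a single index $j_1 \ge j_0$ and elements $\ell_i' \in L_{j_1}$ with $\phi_{j_1}(\ell_i') = \ell_i$ for all $i$. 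Then $z = \sum_i (g_i \circ \phi_{j_1})(\ell_i')$ exhibits $z \in \tr_{L_{j_1}}(M)$. Finally, since $j_0 \le j_1$, the standing hypothesis $\tr_{L_i} \ge \tr_{L_j}$ for $i \le j$ gives $\tr_{L_{j_1}}(M) \subseteq \tr_{L_{j_0}}(M)$, so $z \in \tr_{L_{j_0}}(M)$.

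Running this argument for every $j_0 \in \Gamma$ and every $M$ gives $\tr_L(M) \subseteq \bigcap_{j} \tr_{L_j}(M) = \big(\varprojlim_j \tr_{L_j}\big)(M)$, which is the desired inequality of submodule selectors. (One should double-check that the two parenthetical special cases in the statement — $L_i$ generating $L_j$, and $R$-algebra maps — genuinely produce the hypothesis $\tr_{L_i} \ge \tr_{L_j}$; the first is the cited Lemma of Lin, and the second follows because an $R$-algebra map $L_i \to L_j$ in particular makes $L_j$ a module receiving a map from $L_i$, so the general precomposition principle applies, cf. Remark~\ref{rem:localcohom}.)

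The main obstacle is purely bookkeeping: one must be careful that the finitely many elements $\ell_1,\dots,\ell_k$ appearing in the expression for a single $z$ can be lifted to a \emph{common} stage $L_{j_1}$ of the direct system — this is exactly where directedness of $\Gamma$ is essential, and it is the only nontrivial point. Everything else is a direct unwinding of Lemma~\ref{lem:trsums}(b) and the functoriality of $\tr$ established in Lemma~\ref{lem:algtrace}. Note that this argument does \emph{not} give the reverse inclusion (and indeed Example~\ref{ex:tracebreakslimits} shows it can fail), because an element of $\bigcap_j \tr_{L_j}(M)$ may be witnessed by maps out of $L_j$ for each $j$ with no coherence across $j$, hence need not descend to a map out of $L$.
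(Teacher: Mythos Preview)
Your proof is correct and follows the same approach as the paper: show $\tr_L \le \tr_{L_j}$ for each $j$, then intersect. The paper's proof is a terse two-liner that simply asserts ``Hence $\tr_L \le \tr_{L_j}$ for all $j \in \Gamma$'' without justification; your argument (lifting the finitely many $\ell_i$ to a common stage $L_{j_1} \ge L_{j_0}$ via directedness, then invoking the hypothesis $\tr_{L_{j_1}} \le \tr_{L_{j_0}}$) is exactly the detail needed to make that step rigorous.
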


\begin{proof}
By the hypothesis, for $i \le j$, $\tr_{L_i} \ge \tr_{L_j}$. Hence $\tr_L \le \tr_{L_j}$ for all $j \in \Gamma$. This implies that $\tr_L \le \bigcap_j \tr_{L_j}$, which by definition is $\varprojlim_j \tr_{L_j}$.
\end{proof}

We end this subsection by noting the following, which amounts to a general principle regarding how to deal with traces when working with modules over various rings.

\begin{prop}\label{pr:trbc}
Let $T$ be an $R$-algebra, $L$ a $T$-module, and $X \subseteq L$ a subset.  Let $U$ be the $T$-submodule of $L$ generated by $X$.  Then $\tr_{X,L} = \tr_{U,L}$, where in both cases, $\tr$ means $\tr^{(R)}$.
\end{prop}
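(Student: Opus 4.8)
The plan is to show the two containments $\tr_{X,L} \le \tr_{U,L}$ and $\tr_{U,L} \le \tr_{X,L}$ separately, using the element-wise characterization of traces from Lemma~\ref{lem:trsums}(b), with the understanding that all $\Hom$'s are over $R$ throughout (so $\Hom_R(L,-)$, not $\Hom_T(L,-)$). The first containment is immediate: since $X \subseteq U$, any sum $\sum_i g_i(x_i)$ with $g_i \in \Hom_R(L,M)$ and $x_i \in X$ is also a sum with $x_i \in U$, so $\tr_{X,L}(M) \subseteq \tr_{U,L}(M)$ for every $M \in \cM$. This is really just a special case of Lemma~\ref{lem:trsums}(a), which already tells us $\tr_{X,L} = \tr_{RX,L}$ where $RX$ is the $R$-submodule generated by $X$; the point of the present proposition is to upgrade $RX$ to the (generally larger) $T$-submodule $U$.

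For the reverse containment, fix $M \in \cM$ and $z \in \tr_{U,L}(M)$. By Lemma~\ref{lem:trsums}(b) there are $R$-linear maps $g_i \colon L \to M$ and elements $u_i \in U$ with $z = \sum_i g_i(u_i)$. Now each $u_i$ is a $T$-linear combination of elements of $X$, say $u_i = \sum_j t_{ij} x_{ij}$ with $t_{ij} \in T$ and $x_{ij} \in X$. The key observation is that for a $T$-module $L$ and $t \in T$, multiplication by $t$ is an $R$-linear (indeed $T$-linear) endomorphism $\mu_t \colon L \to L$, so $g_i \circ \mu_{t_{ij}} \colon L \to M$ is again $R$-linear, and $g_i(u_i) = \sum_j g_i(t_{ij} x_{ij}) = \sum_j (g_i \circ \mu_{t_{ij}})(x_{ij})$. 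Substituting, $z = \sum_{i,j} (g_i \circ \mu_{t_{ij}})(x_{ij})$ is a finite sum of the form $\sum h_k(x_k)$ with $h_k \in \Hom_R(L,M)$ and $x_k \in X$, so by Lemma~\ref{lem:trsums}(b) again, $z \in \tr_{X,L}(M)$. Since $M$ was arbitrary, $\tr_{U,L} \le \tr_{X,L}$, and combined with the first containment this gives equality.

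There is no serious obstacle here; the only thing to be careful about is the distinction between the $R$-module and $T$-module structures. The whole content is that although the trace is computed with $R$-linear homomorphisms, the target module $L$ still carries its $T$-action, and we are free to precompose an $R$-linear map $L \to M$ with the $T$-scalar-multiplication maps on $L$ (which are $R$-linear because $R \to T$ is a ring map) to "absorb" the $T$-coefficients appearing in an element of $U$. One should state this absorption step cleanly — perhaps explicitly noting that $\mu_t \in \End_R(L)$ for each $t \in T$ — so that the reader sees why $U$ and not merely $RX$ appears. The finiteness bookkeeping (each $u_i$ involves only finitely many terms, and there are only finitely many $u_i$) is routine and can be handled in one sentence via Lemma~\ref{lem:trsums}(b).
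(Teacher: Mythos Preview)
Your proof is correct and follows essentially the same approach as the paper: both establish the easy containment from $X \subseteq U$, and for the reverse both absorb the $T$-coefficients in an element of $U$ by precomposing an $R$-linear map $g \colon L \to M$ with the $R$-linear multiplication-by-$t$ endomorphism of $L$ (your $g_i \circ \mu_{t_{ij}}$ is exactly the paper's $h_i(\ell) = g(t_i \ell)$). The only cosmetic difference is that the paper reduces to checking a single generator $g(y)$ rather than invoking Lemma~\ref{lem:trsums}(b) for a general element, but the idea is identical.
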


\begin{proof}
The fact that $\tr_{X,L} \leq \tr_{U,L}$ follows by definition and from the fact that $X$ is a subset of $U$.

Conversely, let $M$ be an $R$-module.  To show that $\tr_{U,L}(M) \subseteq \tr_{X,L}(M)$, it suffices to show that for any $R$-linear $g: L \rightarrow M$ and any $y \in U$, we have $g(y) \in \tr_{X,L}(M)$.  So take such $y$ and $g$.  Since $U$ is generated by $X$ as a $T$-module, there exist $n\in \N$, $t_i \in T$, and $x_i \in X$ such that $y= \sum_{i=1}^n t_i x_i$.

For each $1\leq i \leq n$, consider the $R$-linear map $h_i: L \rightarrow M$ given by $h_i(\ell) := g(t_i \ell)$.  This is $R$-linear because for any $\ell, \ell' \in L$ and $r\in R$, we have \[
h_i(\ell + r\ell') = g(t_i(\ell + r\ell')) = g(t_i \ell) + r g(t_i \ell') = h_i(\ell) + rh_i(\ell').
\]
Then $g(y)=\sum_{i=1}^n g(t_i x_i) = \sum_{i=1}^n h_i(x_i) \in \tr_{X,L}(M)$.
\end{proof}

\subsection*{Trace behaves well under flat base change}

The next result generalizes a result of Lindo \cite[Proposition 2.8(viii)]{Lin17} to the setting of $(X,L)$-trace, using a similar proof technique.

\begin{thm}\label{thm:traceloc}
Let $B$ be an $R$-module, $X$ a subset of $B$, and $S$ a commutative $R$-algebra. Then for any $R$-module $M$, 
\[\im(\tr_{X,B}(M)\otimes_R S \rightarrow M \otimes_R S) \subseteq \tr_{X', B\otimes_R S}(M \otimes_R S)\] as $S$-submodules of $M \otimes_R S$, where $X' = \{x \otimes s \mid x\in X, s\in S\} \subseteq B \otimes_R S$. If $B$ is finitely presented and
$S$ is flat over $R$, then we have equality. 
\end{thm}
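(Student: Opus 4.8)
The plan is to prove the (unconditional) containment first, and then the reverse containment under the hypotheses that $B$ is finitely presented and $S$ is flat over $R$.

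For the containment, I would use the description from Lemma~\ref{lem:trsums}(b): an element of $\tr_{X,B}(M)$ is a finite sum $\sum_i g_i(x_i)$ with $g_i \in \Hom_R(B,M)$ and $x_i \in X$. Hence, after expanding pure tensors, a typical element of $\im(\tr_{X,B}(M) \otimes_R S \to M \otimes_R S)$ is a finite sum $\sum_i g_i(x_i) \otimes s_i$ with $g_i \in \Hom_R(B,M)$, $x_i \in X$, and $s_i \in S$. For each $i$, base change along $R \to S$ produces an $S$-linear map $g_i \otimes_R S : B \otimes_R S \to M \otimes_R S$ with $(g_i \otimes_R S)(x_i \otimes s_i) = g_i(x_i) \otimes s_i$ and $x_i \otimes s_i \in X'$; so each summand lies in $\tr_{X', B \otimes_R S}^{(S)}(M \otimes_R S)$, and therefore so does the whole sum. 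This handles arbitrary $B$ and $S$.

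Now assume $B$ is finitely presented and $S$ is flat over $R$. Flatness makes $\tr_{X,B}(M) \otimes_R S \to M \otimes_R S$ injective, so I would identify its image with $\tr_{X,B}(M) \otimes_R S$ sitting inside $M \otimes_R S$. The key input will be the natural isomorphism
\[\Hom_R(B, M) \otimes_R S \;\xrightarrow{\sim}\; \Hom_S\bigl(B \otimes_R S,\ M \otimes_R S\bigr), \qquad g \otimes s \longmapsto \bigl(b \otimes s' \mapsto g(b) \otimes s s'\bigr),\]
valid because $B$ is finitely presented and $S$ is flat; I would obtain it by applying $\Hom_R(-,M)$ to a finite free presentation $R^m \to R^n \to B \to 0$, tensoring the resulting left-exact sequence with the flat module $S$, and comparing with the left-exact sequence gotten by applying $\Hom_S(-, M \otimes_R S)$ to the base-changed presentation $S^m \to S^n \to B \otimes_R S \to 0$ (identifying $\Hom_R(B, M \otimes_R S) \cong \Hom_S(B \otimes_R S, M \otimes_R S)$ via tensor--hom adjunction). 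The upshot I need is that every $S$-linear map $h : B \otimes_R S \to M \otimes_R S$ can be written as $h = \sum_l s_l (g_l \otimes_R S)$ with finitely many $g_l \in \Hom_R(B, M)$ and $s_l \in S$, so that $h(x \otimes 1) = \sum_l g_l(x) \otimes s_l$ for all $x \in B$.

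Granting this, the reverse containment falls out: by Lemma~\ref{lem:trsums}(b) (applied over $S$) a typical element of $\tr_{X', B \otimes_R S}^{(S)}(M \otimes_R S)$ is a finite sum $\sum_k h_k(y_k)$ with $h_k$ $S$-linear and $y_k \in X'$; writing $y_k = x_k \otimes t_k$ with $x_k \in X$ and $t_k \in S$ gives $h_k(y_k) = t_k\,h_k(x_k \otimes 1) = \sum_l g_{kl}(x_k) \otimes t_k s_{kl}$. Since $g_{kl}(x_k) \in \tr_{X,B}(M)$, each term lies in $\im(\tr_{X,B}(M) \otimes_R S \to M \otimes_R S)$, hence so does $\sum_k h_k(y_k)$, giving equality. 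The step I expect to be the main obstacle is pinning down the Hom--tensor isomorphism in exactly the form above --- in particular, checking that it carries a decomposable tensor $g \otimes s$ to the scaled base-changed map $s\cdot(g \otimes_R S)$, which is what makes the two traces match term by term; the rest is routine manipulation of finite sums of pure tensors.
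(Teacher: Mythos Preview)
Your proposal is correct and follows essentially the same route as the paper: both directions hinge on Lemma~\ref{lem:trsums}(b) together with the base-change isomorphism $\Hom_R(B,M)\otimes_R S \xrightarrow{\sim} \Hom_S(B\otimes_R S, M\otimes_R S)$ (the paper cites \cite[Lemma~3.2.4]{EnJe-relhombook} for this, whereas you sketch it from a finite free presentation). Your explicit mention of flatness making $\tr_{X,B}(M)\otimes_R S \to M\otimes_R S$ injective is a detail the paper leaves implicit, and your handling of $y_k = x_k \otimes t_k$ as a single pure tensor is cleaner than the paper's unnecessary expansion $d_i = \sum_k x_{ik}\otimes s_{ik}$.
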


\begin{proof}
Let $z\in \tr_{X,B}(M)$ and $s \in S$.  Then there exist $n\in \N$ and $R$-linear maps $f_i: B \rightarrow M$ and elements $x_i \in X$ for $1\leq i \leq n$ such that $z=\sum_{i=1}^n f_i(x_i)$.  Then the maps $f_i \otimes 1: B \otimes_R S \rightarrow M \otimes_R S$ are $S$-linear, and $z \otimes s = \sum_{i=1}^n (f_i \otimes 1)(x_i \otimes s)$. Hence, $\im(\tr_{X,B}(M)\otimes_R S \rightarrow M \otimes_R S) \subseteq \tr_{X', B\otimes_R S}(M \otimes_R S)$.

Now suppose $S$ is a flat $R$-algebra and $B$ a finitely generated $R$-module.  Let $c \in \tr_{X', B \otimes_R S}(M \otimes_R S)$.  Then there are $S$-linear maps $g_i: B\otimes_R S \rightarrow M \otimes_R S$ and elements $d_i \in X'$ such that $c = \sum_i g_i(d_i)$.  We have $d_i = \sum_k x_{ik} \otimes s_{ik}$ for elements $x_{ik} \in X$ and $s_{ik} \in S$.  Moreover, by the assumptions on $B$ and $S$, the following map is an isomorphism (see e.g. \cite[Lemma 3.2.4]{EnJe-relhombook}):
\[
\phi:\Hom_R(B,M) \otimes S \to \Hom_S(B \otimes_R S, M \otimes_R S)
\]
sending $f \otimes s \mapsto (b \otimes t \mapsto f(b) \otimes st)$.
For each $i$, 
\[g_i=\phi\left(\sum_j f_{ij} \otimes t_{ij}\right)\]
for $R$-linear maps $f_{ij}:B \to M$ and elements $t_{ij} \in S$.
We have
\begin{align*}
g_i(d_i) &=\phi\left(\sum_j f_{ij} \otimes t_{ij}\right)\left(\sum_{k} x_{ik} \otimes s_{ik}\right) \\
&=\sum_{j,k} \phi(f_{ij} \otimes t_{ij})(x_{ik} \otimes s_{ik}) \\
&=\sum_{j,k}(f_{ij}(x_{ik}) \otimes t_{ij}s_{ik}) \in \tr_{X,B}(M) \otimes_R S.    
\end{align*}
It follows that \[
c = \sum_i g_i(d_i) \in \tr_{X,B}(M) \otimes_R S. \qedhere
\]
\end{proof}

We include a statement of Lindo's result, which deals with the special case corresponding to the usual trace operation, for the sake of keeping the paper self-contained:

\begin{cor}[{\cite[Proposition 2.8(viii)]{Lin17}}]\label{cor:trloc}
If $B$ is a finitely presented $R$-module and $S$ is a flat commutative $R$-algebra, then $\tr_B(R) \otimes_R S = \tr_{B \otimes_R S}(S)$.  In particular, for any prime ideal $\p \in \Spec R$, we have $\tr_B(R)_\p = \tr_{B_\p}(R_\p)$ and ${\tr_B(R)} \widehat{R_\p} = \widehat{\tr_{B_\p}(R_\p)}$.
\end{cor}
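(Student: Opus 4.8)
The plan is to read Corollary~\ref{cor:trloc} as the special case $M=R$, $X=B$ of Theorem~\ref{thm:traceloc}, followed by two routine clean-ups: identifying $R\otimes_R S$ with $S$, and replacing the generating set $X'$ by the submodule it generates.

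First I would apply Theorem~\ref{thm:traceloc} with $M=R$ and $X=B$, so that $\tr_{X,B}=\tr_{B,B}=\tr_B$. Since $B$ is finitely presented and $S$ is flat over $R$, the ``equality'' half of that theorem gives
\[
\im\bigl(\tr_B(R)\otimes_R S\longrightarrow R\otimes_R S\bigr)=\tr_{X',\,B\otimes_R S}(R\otimes_R S),\qquad X'=\{b\otimes s: b\in B,\ s\in S\},
\]
where the trace on the right is computed over $S$. Now identify $R\otimes_R S=S$ canonically. Because $S$ is $R$-flat, the map $\tr_B(R)\otimes_R S\to R\otimes_R S=S$ induced by the inclusion $\tr_B(R)\hookrightarrow R$ is injective, so its image is canonically $\tr_B(R)\otimes_R S$, realized inside $S$ as the extended ideal $\tr_B(R)S$. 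Hence the display becomes $\tr_B(R)\otimes_R S=\tr_{X',\,B\otimes_R S}(S)$ as submodules of $S$.

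It remains to see that $\tr_{X',\,B\otimes_R S}=\tr_{B\otimes_R S}$, i.e.\ that we may replace the subset $X'$ (which is not even closed under addition) by the whole module. But $X'$, and in fact already the subset $\{b\otimes 1:b\in B\}$, generates $B\otimes_R S$ as an $S$-module; so in the manner of Lemma~\ref{lem:trsums}(a), applied over the base ring $S$ (equivalently, by the argument of Proposition~\ref{pr:trbc} with $T=S$ and $L=B\otimes_R S$), we get $\tr_{X',\,B\otimes_R S}=\tr_{B\otimes_R S,\,B\otimes_R S}=\tr_{B\otimes_R S}$. Combining with the previous paragraph yields $\tr_B(R)\otimes_R S=\tr_{B\otimes_R S}(S)$, the main assertion.

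For the ``in particular'' statements I would specialize $S$ twice. With $S=R_\p$, which is $R$-flat, and $B\otimes_R R_\p=B_\p$, the formula just proved reads $\tr_B(R)_\p=\tr_{B_\p}(R_\p)$. For the completion, note that $\widehat{R_\p}$ is flat over $R$ (composite of the flat maps $R\to R_\p\to\widehat{R_\p}$) and over $R_\p$; then combine the localization case with flatness over $R_\p$:
\[
\tr_B(R)\,\widehat{R_\p}=\tr_B(R)_\p\otimes_{R_\p}\widehat{R_\p}=\tr_{B_\p}(R_\p)\otimes_{R_\p}\widehat{R_\p}=\widehat{\tr_{B_\p}(R_\p)},
\]
the last step because $\tr_{B_\p}(R_\p)$ is a finitely generated ideal of the Noetherian local ring $R_\p$ (alternatively, apply the main formula directly with $S=\widehat{R_\p}$, using $B\otimes_R\widehat{R_\p}=\widehat{B_\p}$). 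There is no real obstacle here: the substantive content is entirely contained in Theorem~\ref{thm:traceloc}, and the only points requiring attention are that the trace on the right-hand side must be read over $S$ rather than over $R$, and that one must pass from the subset $X'$ to the $S$-submodule it generates before invoking the symbol $\tr_{B\otimes_R S}$.
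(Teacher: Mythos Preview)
Your proof is correct and follows exactly the approach the paper intends: the paper gives no separate proof for this corollary, presenting it simply as the $M=R$, $X=B$ specialization of Theorem~\ref{thm:traceloc}. Your account fills in the routine details (identifying $R\otimes_R S$ with $S$, using flatness for injectivity, and replacing $X'$ by the $S$-module it generates via Lemma~\ref{lem:trsums}(a)) precisely as needed.
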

\begin{example}
We give an example where $B$ is not \fg\ and the formation of the trace ideal does not commute with localization, even though the ring is complete. Let $R$ be a complete DVR, e.g. $k[[t]]$, let $\m=(t)$ be its maximal ideal, let $B$ be its fraction field, and let $P := (0)R$.
Then $\tr_B(R)=0$.  To see this, let $g: B \rightarrow R$ be $R$-linear and $\alpha = g(1)$.  Then for any positive integer $n$, we have $\alpha = g(1) = g(t^n \cdot 1/t^n) = t^n g(1/t^n) \in \m^n$.  Thus, $\alpha \in \bigcap_n \m^n = 0$,
whence $g=0$.  On the other hand, since $B_P=R_P$, $\tr_{B_P}(R_P)=R_P$.
\end{example}

\subsection*{Dual of trace is module-torsion}

We continue the convention that $R$ is a commutative ring with identity.

\begin{defn}
Let $L$ be an $R$-module, and $S \subseteq L$.  Then we define the \emph{module torsion with respect to $S \subseteq L$} to be the submodule selector given by \[
\tom_{S,L}(M) := \{z\in M \mid \forall s\in S,\ s \otimes z=0 \text{ in } L \otimes_R M \}.
\]
If $S= \{x\}$ is a singleton, we write $\tom_{x,L} := \tom_{\{x\},L}$.  If $S=L$, we write $\tom_L := \tom_{L,L}$.
\end{defn}

\begin{rem}\label{rem:tomfacts}
We note the following easy facts, parallel to Remark~\ref{rem:tracefacts}.
\begin{enumerate}
    \item For any $R$-module $L$ and any set $S \subseteq L$, $\tom_{S,L}$ is a submodule selector on $\cM$.  In particular, $\tom_{x,L}$ and $\tom_L$ are submodule selectors.
    \item When $L=A$ is an $R$-algebra, $\tom_A = \tom_{1,A}$.
    \item When $I$ is an ideal of $R$   and $L,N$ are $R$-modules, $\tom_{L/IL}(N) = \pi^{-1}(\tom_L(N/IN))$, where $\pi: N \onto N/IN$ is the natural surjection.  In particular, $\tom_{R/I}(N) = IN$.
    \item When $I$ is an ideal of $R$, $\tom_{I,R}(N) = (0 :_N I)$.
    \item If $L$ is an $R$-module and $x\in L$, then $\tom_{Rx} \leq \tom_{x,L}$: consider the composition $M \ra Rx \otimes_R M \ra L \otimes_RM$, where the first map sends $m \mapsto x \otimes_R m$.
    \item However, the converse is false.  Let $x$ be a regular element in $R$ and let $M=R/(x)$. Then $\tom_{Rx}(M)=\tom_{R/\ann_R(x)}(M)=\ann_R(x)M=0$ but $\tom_{x,R}(M)=\ann_M(x)=M$.
\end{enumerate}
\end{rem}

Next we give a kind of dual to Proposition~\ref{pr:trbc}.
\begin{prop}\label{pr:tombc}
Let $T$ be an $R$-algebra, $L$ a $T$-module, and $X \subseteq L$ a subset.  Let $U$ be the $T$-submodule of $L$ generated by $X$.  Then $\tom_{X,L} = \tom_{U,L}$, where in both cases, $\tom$ means $\tom^{(R)}$.
\end{prop}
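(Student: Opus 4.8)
The plan is to prove the two inclusions $\tom_{U,L} \le \tom_{X,L}$ and $\tom_{X,L} \le \tom_{U,L}$ separately, mirroring the structure of the proof of Proposition~\ref{pr:trbc}. The first inclusion is immediate from the definition of module torsion: since $X \subseteq U$, any $R$-module $M$ and any $z \in \tom_{U,L}(M)$ satisfies $s \otimes z = 0$ in $L \otimes_R M$ for all $s \in U$, hence in particular for all $x \in X$, so $z \in \tom_{X,L}(M)$.

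For the reverse inclusion, I would fix an $R$-module $M$ and take $z \in \tom_{X,L}(M)$, so that $x \otimes z = 0$ in $L \otimes_R M$ for every $x \in X$, and then check that $y \otimes z = 0$ for an arbitrary $y \in U$. Since $U$ is the $T$-submodule of $L$ generated by $X$, I can write $y = \sum_{i=1}^n t_i x_i$ with $t_i \in T$ and $x_i \in X$. The crucial observation is that $L \otimes_R M$ carries a natural $T$-module structure induced by the $T$-action on the first factor, namely $t \cdot (\ell \otimes m) = (t\ell) \otimes m$; this is well-defined precisely because $L$ is a $T$-module and the $T$-action on $L$ is $R$-linear, $R$ acting through the structure map $R \to T$. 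Under this structure, $(t_i x_i) \otimes z = t_i \cdot (x_i \otimes z) = t_i \cdot 0 = 0$ for each $i$, so $y \otimes z = \sum_{i=1}^n (t_i x_i) \otimes z = 0$, giving $z \in \tom_{U,L}(M)$.

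The only point requiring any care — and it is a mild one — is the invocation of the $T$-module structure on $L \otimes_R M$: one must recall that for a $T$-module $L$ and an $R$-module $M$ the tensor product $L \otimes_R M$ is again a $T$-module, and that this is exactly what licenses pulling scalars from $T$ across a simple tensor. Notably, unlike Theorem~\ref{thm:traceloc}, no finiteness or flatness hypotheses are needed, since the argument only moves a finite $T$-linear combination across the tensor symbol in one direction.
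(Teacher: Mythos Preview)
Your proof is correct and essentially identical to the paper's own argument: both handle $\tom_{U,L} \le \tom_{X,L}$ immediately from $X \subseteq U$, and for the reverse inclusion both write an arbitrary $u \in U$ as $\sum_i t_i x_i$ and use the left $T$-module structure on $L \otimes_R M$ to conclude $u \otimes z = \sum_i t_i \cdot (x_i \otimes z) = 0$.
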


\begin{proof}
The fact that $\tom_{U,L} \leq \tom_{X,L}$ follows by definition and from the fact that $X$ is a subset of $U$.

For the opposite direction, let $M$ be an $R$-module and $z \in \tom_{X,L}(M)$.  Let $u\in U$.  Then there exist $n\in \N$, $t_i \in T$, and $x_i \in X$ such that $u=\sum_{i=1}^n t_i x_i$.  Using the left $T$-module structure of $L \otimes_RM$, we have in that module \[
u \otimes z = (\sum_{i=1}^n t_i x_i) \otimes z = \sum_{i=1}^n t_i \cdot (x_i \otimes z) = \sum_{i=1}^n t_i \cdot 0 = 0. \qedhere
\]
\end{proof}

\begin{thm}\label{thm:tracedual}
Let $R$ be a complete local Noetherian ring, $L$ an $R$-module, and $S \subseteq L$.  Then $\tr_{S,L}^\dual = \tom_{S,L}$.
\end{thm}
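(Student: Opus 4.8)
The plan is to use the alternate description of the smile dual from Theorem~\ref{alternatedualdescription}, which says that for any submodule selector $\alpha$ and $R$-module $M$, $\alpha^\dual(M) = \bigcap_{g \in \alpha(M^\vee)} \ker(g)$, where here $g$ ranges over $\Hom_R(M, E)$. Applying this with $\alpha = \tr_{S,L}$, I need to show that $\bigcap_{g \in \tr_{S,L}(M^\vee)} \ker(g) = \tom_{S,L}(M)$, i.e. that for $z \in M$, we have $s \otimes z = 0$ in $L \otimes_R M$ for all $s \in S$ if and only if $g(z) = 0$ for every $g \in \tr_{S,L}(M^\vee)$.

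First I would unwind the right-hand side using Lemma~\ref{lem:trsums}(b): an element $g \in \tr_{S,L}(M^\vee)$ is a finite sum $g = \sum_i h_i(s_i)$ with $h_i \in \Hom_R(L, M^\vee)$ and $s_i \in S$. So the condition ``$g(z) = 0$ for all such $g$'' is equivalent to ``$h(s)(z) = 0$ for every $h \in \Hom_R(L, M^\vee)$ and every $s \in S$.'' Then I would use the adjunction/Hom-tensor identifications $\Hom_R(L, M^\vee) = \Hom_R(L, \Hom_R(M, E)) \cong \Hom_R(L \otimes_R M, E) = (L \otimes_R M)^\vee$, under which an element $h$ corresponds to the functional $\ell \otimes m \mapsto h(\ell)(m)$. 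Under this identification, $h(s)(z)$ is exactly the value of the corresponding functional $\tilde h \in (L \otimes_R M)^\vee$ on the element $s \otimes z \in L \otimes_R M$. So the right-hand side condition becomes: $\tilde h(s \otimes z) = 0$ for every $\tilde h \in (L \otimes_R M)^\vee$ and every $s \in S$.

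The proof then finishes by the standard fact that, since $E$ is an injective cogenerator for $R$-modules, an element $w$ of an $R$-module $W$ is zero if and only if $\psi(w) = 0$ for all $\psi \in W^\vee$ (apply this with $W = L \otimes_R M$ and $w = s \otimes z$). Hence ``$\tilde h(s \otimes z) = 0$ for all $\tilde h$'' is equivalent to ``$s \otimes z = 0$ in $L \otimes_R M$'', and requiring this for all $s \in S$ is precisely the defining condition for $z \in \tom_{S,L}(M)$.

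The main obstacle I expect is bookkeeping with the Hom-tensor adjunction and making sure the identification $\Hom_R(L, M^\vee) \cong (L \otimes_R M)^\vee$ is compatible with the evaluation appearing in Theorem~\ref{alternatedualdescription}; one has to be careful that the isomorphism $M \cong M^{\vee\vee}$ used there, namely $i(z)(g) = g(z)$, threads through correctly so that $g(z)$ really does match $\tilde h(s \otimes z)$. There is also a minor point that in Theorem~\ref{alternatedualdescription} the set $\alpha(M^\vee)$ consists of homomorphisms $M^\vee \to E$ obtained by identifying $M^{\vee\vee}$ with $M$; I would state at the outset that we work under these canonical identifications throughout, so that $\tr_{S,L}(M^\vee) \subseteq \Hom_R(M, E)$ via Matlis duality, and the computation above is literally valid. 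No completeness or Noetherian hypothesis is needed beyond guaranteeing that $\cM$ is closed under Matlis duality so that $\tom_{S,L}$ and $\tr_{S,L}^\dual$ live in the same place; the cogenerator argument is what does the real work.
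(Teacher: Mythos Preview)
Your proposal is correct and follows essentially the same approach as the paper's proof: both use Theorem~\ref{alternatedualdescription} to reduce to the condition ``$g(z)=0$ for all $g\in\tr_{S,L}(M^\vee)$,'' unwind $g$ as a sum $\sum_i \phi_i(s_i)$, apply Hom--tensor adjointness $\Hom_R(L,M^\vee)\cong (L\otimes_R M)^\vee$ to translate $\phi(s)(z)$ into evaluation at $s\otimes z$, and then invoke the fact that $E$ is an injective cogenerator to conclude. The only cosmetic difference is that the paper handles the two inclusions separately (one by direct computation, one by contradiction), whereas you run the argument as a single chain of equivalences.
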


\begin{proof}
Let $M$ be a Matlis-dualizable module.  We first let $z\in \tom_{S,L}(M)$.  That is, $z\in M$ such that $s\otimes z = 0$ in $L \otimes_RM$ for all $s\in S$. By Theorem~\ref{alternatedualdescription}, we want to show that $g(z)=0$ for all $g\in \tr_{S,L}(M^\vee)$.  So let $g\in \tr_{S,L}(M^\vee)$.  Then there exist $R$-linear maps $\phi_i: L \rightarrow M^\vee$ and $s_i \in S$ such that $g = \sum_{i=1}^n \phi_i(s_i)$.  Let $\psi_i: L \otimes_RM \rightarrow E$ be the maps corresponding to the $\phi_i$ by Hom-tensor adjointness.  Since $s_i \otimes z=0$ in $L \otimes M$, we have \[
g(z) = \sum_{i=1}^n \phi_i(s_i)(z) = \sum_{i=1}^n \psi_i(s_i \otimes z) = \sum_{i=1}^n \psi_i(0) = 0.
\]
This implies that $\tom_{S,L} \subseteq \left(\tr_{S,L}\right)^\dual$.

For the reverse inclusion, let $z\in \tr_{S,L}^\dual(M)$ and let $s\in S$.  If $s \otimes z \neq 0$ in $L \otimes_RM$, then since $\Hom_R(-,E)$ never kills a nonzero module, we have $(R \cdot (s \otimes z))^\vee \neq 0$.  Since $E$ is injective, it follows that there is an $R$-linear map $\phi: L \otimes_R M \rightarrow E$ such that $\phi(s \otimes z) \neq 0$. 

Let $\psi: L \rightarrow M^\vee$ be the corresponding map that arises from Hom-tensor adjointness.  Then we have \[
\psi(s)(z) = \phi(s \otimes z) \neq 0.
\]
On the other hand, since $z\in \tr_{S,L}^\dual(M)$ and $\psi(s) \in \tr_{S,L}(M^\vee)$, Theorem~\ref{alternatedualdescription} yields that $z\in \ker \psi(s)$, contradicting the display above. Hence, $s \otimes z =0$ in $L \otimes_R M$ for all $s\in S$, whence $z\in \tom_{S,L}(M)$, as was to be shown.
\end{proof}

Note that the $R$-module $L$ has no restriction on it--it does not have to be either \fg\ or artinian.

\begin{cor}
Let $R$ be a complete local ring, $L$ an $R$-module, and $S \subseteq L$. Then $\tom_{S,L}$ is functorial and $\tom_L$ is \radss.
\end{cor}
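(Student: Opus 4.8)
The plan is to read everything off the identification $\tom_{S,L} = \tr_{S,L}^\dual$ from Theorem~\ref{thm:tracedual}, combined with the dictionary of Theorem~\ref{thm:smsdual} and the facts, already established in Lemma~\ref{lem:algtrace}, that $\tr_{S,L}$ is a preradical and $\tr_L$ is idempotent. I would work on a category $\cM$ of $R$-modules on which Matlis duality is an involution, so that $^\dual$ and Theorem~\ref{thm:smsdual} are available; this is the only place completeness of $R$ is used.

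First I would dispose of the co-idempotence of $\tom_L$, which is immediate: $\tr_L$ is idempotent by Lemma~\ref{lem:algtrace}, so $\tom_L = \tr_L^\dual$ is \radss\ by Theorem~\ref{thm:smsdual}(3).

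For functoriality of $\tom_{S,L}$ I would recall that, by definition, \emph{functorial} means \emph{order-preserving and surjection-functorial}, and establish the two halves separately. Since $\tr_{S,L}$ is a preradical it is in particular surjection-functorial, so Theorem~\ref{thm:smsdual}(2) gives that $\tom_{S,L} = \tr_{S,L}^\dual$ is order-preserving. For surjection-functoriality I would run the same theorem backwards: applying Theorem~\ref{thm:smsdual}(2) to the submodule selector $\tom_{S,L}$ itself shows that $\tom_{S,L}$ is surjection-functorial if and only if $\tom_{S,L}^\dual$ is order-preserving, and $\tom_{S,L}^\dual = (\tr_{S,L}^\dual)^\dual = \tr_{S,L}$ by Theorem~\ref{thm:smsdual}(1), which is order-preserving because $\tr_{S,L}$ is a preradical. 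Hence $\tom_{S,L}$ is both order-preserving and surjection-functorial, i.e.\ functorial.

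There is no real obstacle here; the one subtlety worth flagging is that functoriality is not a single invocation of Theorem~\ref{thm:smsdual} but needs to be combined with the involutivity $(\alpha^\dual)^\dual = \alpha$ in order to transport surjection-functoriality across the duality. I would also remark in passing that order-preservation and surjection-functoriality of $\tom_{S,L}$ can alternatively be verified by hand on \emph{all} $R$-modules, using only that $1_L \otimes f$ carries $s \otimes z$ to $s \otimes f(z)$ and that a zero element maps to a zero element; so the functoriality assertion in fact holds with no restriction on the module category, and it is genuinely the co-idempotence of $\tom_L$ that rests on the completeness hypothesis (via Matlis duality).
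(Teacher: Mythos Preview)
Your proposal is correct and follows essentially the same approach as the paper: invoke Theorem~\ref{thm:tracedual} to identify $\tom_{S,L}$ with $\tr_{S,L}^\dual$, then transport the properties of Lemma~\ref{lem:algtrace} across via Theorem~\ref{thm:smsdual}. You have in fact been more careful than the paper's own proof, which simply asserts that functoriality passes through the duality; your explicit decomposition into order-preservation and surjection-functoriality, together with the observation that the latter requires the involutivity $(\alpha^\dual)^\dual = \alpha$, is exactly the missing detail.
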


\begin{proof}
By Theorem \ref{thm:tracedual}, $\tom_{S,L}$ is dual to $\tr_{S,L}$. By Lemma \ref{lem:algtrace}, $\tr_{S,L}$ is functorial and $\tr_L$ is idempotent, and so by Theorems~\ref{thm:tracedual} and \ref{thm:smsdual}, $\tom_{S,L}$ is functorial and $\tom_L$ is \radss.
\end{proof}

\begin{rem}\label{rmk:modclosure}
Recall \cite[Definition 2.3]{RG-bCMsing} that for an $R$-module $L$, the \emph{module closure} given by $L$ is given by the formula \[
N^{\cl_L}_M := \{ u\in M \mid \forall x\in L,\ x \otimes u \in \im (L \otimes N \rightarrow L \otimes M) \},
\]
whenever $M$ is an $R$-module and $N$ a submodule of $M$.  In the complete local Noetherian case, it follows from Theorem~\ref{thm:tracedual} that in the notation of Section~\ref{sec:cidual}, we have $\sigma(\cl_L) = \tom_L$, and hence by the above theorem we have \[
i(\cl_L) = \tr_L.
\]
That is, \emph{the $L$-trace is the interior operation dual to the module closure defined by $L$}. This demonstrates how our framework can be used to achieve results comparable to those in \cite{PeRG}.
\end{rem}

\begin{cor}\label{cor:dualalgebratrace}
If $R$ is a complete local Noetherian ring and $Q$ is an $R$-algebra, then $\tr_Q^\dual(M)=\tom_Q(M) = \ker(M \to Q \otimes M)$, where the map sends $z \mapsto 1 \otimes z$. 
\end{cor}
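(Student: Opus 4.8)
The plan is to deduce this immediately from Theorem~\ref{thm:tracedual}, applied to the module $L = Q$ with distinguished subset $S = Q$. Since $\tr_Q$ is by definition $\tr_{Q,Q}$, and $R$ is complete local Noetherian, that theorem gives $\tr_Q^\dual = \tr_{Q,Q}^\dual = \tom_{Q,Q} = \tom_Q$, which is the first claimed equality. The only remaining work is to unwind the definition of $\tom_Q$ and recognize it as $\ker(M \to Q \otimes_R M)$.

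For the second equality, I would first reduce $\tom_Q$ to $\tom_{1,Q}$. This is Remark~\ref{rem:tomfacts}(2), equivalently the special case of Proposition~\ref{pr:tombc} with $T = Q$, $L = Q$, and $X = \{1\}$, so that the $Q$-submodule $U$ of $Q$ generated by $1$ is all of $Q$: thus $\tom_Q = \tom_{Q,Q} = \tom_{1,Q}$. Then, directly from the definition of module torsion, $\tom_{1,Q}(M) = \{z \in M \mid 1 \otimes z = 0 \text{ in } Q \otimes_R M\}$, which is exactly $\ker(M \to Q \otimes_R M)$ under $z \mapsto 1 \otimes z$.

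There is essentially no obstacle here: the substantive content is carried entirely by Theorem~\ref{thm:tracedual}, and the remainder is bookkeeping. The only point requiring a moment's care is the identification $\tom_Q = \tom_{1,Q}$, i.e.\ that it suffices to test the torsion condition on the single element $1 \in Q$; this holds because for any $s \in Q$ one has $s \otimes z = s \cdot (1 \otimes z)$ in $Q \otimes_R M$ using the natural $Q$-module structure, so vanishing of $1 \otimes z$ forces vanishing of $s \otimes z$ for every $s$. If one prefers a self-contained argument, this one-line observation can simply be inserted in place of the appeal to Proposition~\ref{pr:tombc}.
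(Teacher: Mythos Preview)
Your proposal is correct and matches the paper's approach: the corollary is stated without proof in the paper, as an immediate consequence of Theorem~\ref{thm:tracedual} together with the observation (Remark~\ref{rem:tomfacts}(2)) that $\tom_Q = \tom_{1,Q}$ when $Q$ is an $R$-algebra. Your write-up simply makes explicit the bookkeeping the paper leaves implicit.
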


\begin{cor}\label{cor:dualIM}
Let $I$ be an ideal of a complete local Noetherian ring $R$.  Then $\tr_{R/I}^\dual(M) = IM$.
\end{cor}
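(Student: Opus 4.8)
The plan is to read this off, essentially in one line, from the computations that immediately precede it. Applying Corollary~\ref{cor:dualalgebratrace} to the $R$-algebra $Q = R/I$ gives $\tr_{R/I}^\dual(M) = \tom_{R/I}(M) = \ker(M \to (R/I)\otimes_R M)$, where the map sends $z \mapsto 1 \otimes z$. Under the canonical isomorphism $(R/I)\otimes_R M \cong M/IM$ (sending $\bar r \otimes m \mapsto \overline{rm}$) this map is identified with the quotient surjection $M \onto M/IM$, whose kernel is $IM$. Hence $\tr_{R/I}^\dual(M) = IM$, as claimed.

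I would also note the two alternative routes, at the same level of difficulty. First, one can invoke Theorem~\ref{thm:tracedual} with $L = S = R/I$ to get $\tr_{R/I}^\dual = \tr_{R/I,R/I}^\dual = \tom_{R/I,R/I} = \tom_{R/I}$, and then quote Remark~\ref{rem:tomfacts}(3), which records $\tom_{R/I}(N) = IN$ for every $R$-module $N$. Second, if a fully self-contained argument is wanted, one runs Theorem~\ref{alternatedualdescription}: by Remark~\ref{rem:tracefacts}(3), $\tr_{R/I}(M^\vee) = (0 :_{M^\vee} I)$, and a functional $g \colon M \to E$ is annihilated by $I$ precisely when $g(IM) = 0$, i.e.\ when $g$ factors through $M/IM$; thus $\tr_{R/I}^\dual(M)$ is the intersection of the kernels of all maps $M \to E$ killing $IM$, which is the kernel of the biduality map $M \onto M/IM \to (M/IM)^{\vee\vee}$, and this kernel is $IM$ because $M/IM$ again lies in the ambient category, on which Matlis duality is faithful.

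There is essentially no obstacle here; the only points that need (routine) checking are that $R/I$ is allowed to play the role of the module $L$ in Theorem~\ref{thm:tracedual} — which it is, since no finiteness hypothesis is imposed on $L$ there — and that $M/IM$ remains Matlis-dualizable, which holds because the category $\cM$ is closed under passing to quotients.
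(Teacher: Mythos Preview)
Your proof is correct and matches the paper's intended approach: the paper gives no explicit proof for this corollary, placing it immediately after Corollary~\ref{cor:dualalgebratrace} so that the result follows at once by taking $Q = R/I$ and identifying $\ker(M \to (R/I)\otimes_R M)$ with $IM$. Your alternative routes via Remark~\ref{rem:tomfacts}(3) or Theorem~\ref{alternatedualdescription} are also valid but unnecessary here.
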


The following result connects $\tom$ to the 0th local cohomology module, similar to Remark \ref{rem:localcohom}.

\begin{cor}\label{cor:H0Idual}
Let $I$ be an ideal of a complete local Noetherian ring $R$.  Then for any $R$-module $M$, \begin{align*}
(H^0_I)^\dual(M) &= (\varinjlim \tr_{R/I^n})^\dual(M) = (\varprojlim (\tr_{R/I^n}^\dual))(M) \\
&= (\varprojlim (\tom_{R/I^n}))(M) = \bigcap_n (I^n M) = \ker (M \rightarrow \widehat{M}^I).
\end{align*}
\end{cor}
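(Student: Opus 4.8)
The plan is to obtain all five expressions as a single chain of equalities between submodule selectors, evaluated at $M$, each link being a result already in hand. First I would record that $H^0_I = \varinjlim_n \tr_{R/I^n}$ as submodule selectors: this is exactly Remark~\ref{rem:localcohom}, where the directed system is built from the natural surjections $R/I^{n+1} \onto R/I^n$ (which force $\tr_{R/I^n} \le \tr_{R/I^{n+1}}$ by the lemma on modules that generate, and which make all the $\tr_{R/I^n}$ preradicals by Lemma~\ref{lem:algtrace}), indexed by $\N$ with its usual order. Applying ${}^\dual$ — which is defined on arbitrary $R$-modules via Theorem~\ref{alternatedualdescription} — gives the first equality. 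Since $\N$ coincides with its own poset dual, Proposition~\ref{pr:duallim1} then yields $(\varinjlim_n \tr_{R/I^n})^\dual = \varprojlim_n (\tr_{R/I^n}^\dual)$, the second equality.

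For the third equality I would invoke Theorem~\ref{thm:tracedual} with $L = S = R/I^n$, so that $\tr_{R/I^n}^\dual = \tom_{R/I^n}$ for every $n$. The fourth equality then comes from unwinding the definition of the inverse limit of submodule selectors, $(\varprojlim_n \tom_{R/I^n})(M) = \bigcap_n \tom_{R/I^n}(M)$, together with $\tom_{R/I^n}(M) = I^n M$ from Remark~\ref{rem:tomfacts}(3) (applied to the ideal $I^n$). Finally, the fifth equality is the standard identification $\widehat{M}^I = \varprojlim_n M/I^n M$, whose natural map out of $M$ has kernel $\bigcap_n \ker(M \to M/I^n M) = \bigcap_n I^n M$.

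I do not expect any genuine obstacle — the argument is essentially bookkeeping. The one point meriting care, and where a sign error would most naturally creep in, is the orientation of the two limits: a \emph{direct} limit of traces must dualize to an \emph{inverse} limit of module-torsions (Propositions~\ref{pr:duallim1} and~\ref{pr:dualreverse}), and this is internally consistent precisely because the ascending chain $\tr_{R/I} \le \tr_{R/I^2} \le \cdots$ dualizes to the descending chain $IM \supseteq I^2 M \supseteq \cdots$. Once that orientation is pinned down, each of the five equalities is a one-line citation.
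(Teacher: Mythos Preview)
Your proposal is correct and matches the paper's intended argument: the corollary is stated without proof precisely because it is meant to be read off from Remark~\ref{rem:localcohom}, Proposition~\ref{pr:duallim1}, Theorem~\ref{thm:tracedual}, Remark~\ref{rem:tomfacts}(3), and the standard description of the $I$-adic completion kernel, exactly as you chain them. One small quibble: $\N$ is not literally its own poset dual (the dual reverses the order), but since $\bigcap_n$ is insensitive to the orientation of the index set this is harmless.
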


The next example demonstrates that direct limits do not commute with taking $\tom$.

\begin{example}
\label{ex:tracebreakslimits}
Let $(R,m)$ be a complete DVR (for example $\widehat{\Z_{(2)}}$ or $k[[x]]$), $\pi \in R$ such that $m=(\pi)$, $N_j=\frac{1}{\pi^j}R$, and $K=\Frac(R)$. Then $K=\varinjlim N_j$. 
Since $N_j \cong R$ for all $j$, $\tr_{N_j}(R) = R$. However, $\tr_K(R)=0$. So 
\[\left(\varprojlim_j \tr_{N_j}\right)(R)=\bigcap_j \left(\tr_{N_j}(R)\right)=\bigcap_j R=R,\]
but
\[\tr_{(\varinjlim_j N_j)}(R)=\tr_{K}(R)=0.\]
Hence $\tom_K \gneq \varinjlim \tom_{N_j}.$
\end{example}

\begin{prop}\label{pr:testidealmc}
Let $(R,\m)$ be a complete local Noetherian ring and $Q$ an $R$-module such that the quotient module $Q/JQ$ is $\m$-adically separated for any ideal $J$ of $R$.  Then the last condition of Theorem~\ref{thm:preradical} applies to $\tom_Q$, and hence to $\cl_Q$.  Thus, \[
\bigcap_{\lambda(R/I) < \infty} \ann (\tom_Q(R/I)) = \bigcap_{I  \textrm{ ideal of } R} (I : I^{\cl_Q}).
\]
In particular, this holds when $Q$ is \fg.
\end{prop}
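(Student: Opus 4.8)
The plan is to deduce this from the final clause of Theorem~\ref{thm:preradical}, applied to $\cl := \cl_Q$ with associated submodule selector $\alpha := \sigma(\cl_Q) = \tom_Q$ (this identification is Remark~\ref{rmk:modclosure}). First I would check that $\cl_Q$ genuinely is a residual closure operation, defined on all pairs of $R$-modules and hence in particular on artinian modules and on every $R/I$: by Theorem~\ref{thm:tracedual} we have $\tom_Q = \tr_Q^\dual$, and $\tr_Q$ is an idempotent preradical (Lemma~\ref{lem:algtrace}), so $\tom_Q$ is a functorial, \radss\ submodule selector by Theorem~\ref{thm:smsdual}; hence $\cl_Q = \rho(\tom_Q)$ is \idemeo\ and \opsub\ by Proposition~\ref{pr:ssres}. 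It then remains to produce, for every ideal $J$ of $R$, a family $\{I_\lambda\}$ of $\m$-primary ideals with $J = \bigcap_\lambda I_\lambda$ and $J^{\cl_Q} = \bigcap_\lambda I_\lambda^{\cl_Q}$ (all closures taken in $R$).

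The natural choice is $I_n := J + \m^n$ for $n \in \N$; I may assume $J \subsetneq R$ (for $J = R$ the empty family works), so each $I_n$ is $\m$-primary, and $\bigcap_n I_n = J$ by the Krull intersection theorem. The one computation that matters is the shape of $\cl_Q$ on an ideal $I$: from the defining formula $I^{\cl_Q}_R = \{u \in R : x \otimes u \in \im(Q \otimes_R I \to Q \otimes_R R)\ \forall x \in Q\}$ together with $\im(Q \otimes_R I \to Q) = IQ$ (equivalently, unwinding $I^{\cl_Q}_R = \pi^{-1}(\tom_Q(R/I))$ via $Q \otimes_R R/I \cong Q/IQ$), one gets
\[
I^{\cl_Q}_R = \{ r \in R : rQ \subseteq IQ \} = (IQ :_R Q).
\]
The inclusion $J^{\cl_Q} \subseteq \bigcap_n I_n^{\cl_Q}$ is then immediate from $JQ \subseteq I_n Q$. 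For the reverse inclusion, if $r \in \bigcap_n (I_n Q :_R Q)$ then $rQ \subseteq JQ + \m^n Q$ for every $n$, so for each fixed $x \in Q$ we have $rx \in \bigcap_n (JQ + \m^n Q)$; since $JQ + \m^n Q$ is precisely the preimage in $Q$ of $\m^n(Q/JQ)$, the hypothesis that $Q/JQ$ is $\m$-adically separated forces $rx \in JQ$, and as $x$ was arbitrary, $r \in (JQ :_R Q) = J^{\cl_Q}$. With the hypothesis of Theorem~\ref{thm:preradical} verified, its conclusion gives the displayed equality $\bigcap_{\lambda(R/I)<\infty}\ann(\tom_Q(R/I)) = \bigcap_{I}(I : I^{\cl_Q})$.

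Finally, for the ``in particular'' clause: if $Q$ is finitely generated then $Q/JQ$ is a finitely generated module over the Noetherian local ring $R$, so $\bigcap_n \m^n(Q/JQ) = 0$ by Krull intersection, i.e.\ $Q/JQ$ is $\m$-adically separated for every ideal $J$, and the blanket hypothesis holds automatically. I do not expect a real obstacle here: the argument is a one-step Krull-intersection computation inside $Q/JQ$, with the separatedness hypothesis invoked exactly once (at $\bigcap_n(JQ + \m^n Q) = JQ$); the only point needing a little care is translating the module-closure formula for $\cl_Q$ into the colon-ideal form $I^{\cl_Q}_R = (IQ :_R Q)$ and confirming that $\cl_Q$ really satisfies the hypotheses of Theorem~\ref{thm:preradical}'s last clause.
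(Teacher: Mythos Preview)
Your proposal is correct and follows essentially the same route as the paper: verify the last hypothesis of Theorem~\ref{thm:preradical} by taking $I_n = J + \m^n$, use the identification $I^{\cl_Q}_R = (IQ :_R Q)$, and invoke $\m$-adic separatedness of $Q/JQ$ to get $\bigcap_n (J+\m^n)Q = JQ$. You supply more detail than the paper (the explicit colon-ideal description of $\cl_Q$, the check that $\cl_Q$ is a residual closure operation, the edge case $J=R$, and the justification of the ``in particular'' clause), but the underlying argument is identical.
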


\begin{proof}
Let $J$ be an ideal of $R$.  Then $J = \bigcap_n (J + \m^n)$ by the Krull intersection theorem, and if $a \in \bigcap_n (J+\m^n)^{\cl_Q}$, then for any $q\in Q$, we have $aq \in \bigcap_n (J +\m^n)Q = JQ$ by assumption of $\m$-adic separatedness of $Q/JQ$.  Hence $a\in J^{\cl_Q}$, whence $J^{\cl_Q} = \bigcap_n (J+\m^n)^{\cl_Q}$ as required.
\end{proof}

\section{Special cases: torsion and divisibility for a multiplicative set}\label{sec:tordiv}

As another application of our setup, we consider the notions of torsion and divisibility with respect to a multiplicative set.  These are known in a general way to be dual to each other; we clarify the nature of the duality in the context of Matlis duality over a complete local ring.  

In this section, $R$ is a commutative ring with identity.

Accordingly, recall the following definitions as given in \cite[\S4]{nmeYao-flat}:

\begin{defn}
Let $R$ be a ring, $W \subseteq R$ a multiplicatively closed set, and $M$ an $R$-module.  We say $M$ is \begin{itemize}
    \item \emph{$W$-torsion free} if for all $w\in W$, the homothety map on $M$ induced by $w$ is injective, or equivalently, if the natural map $M \rightarrow W^{-1}R \otimes_R M$ given by $z \mapsto \frac z1$ is injective.
    \item \emph{$W$-divisible} if for all $z\in M$ and all $w\in W$, there is some $y\in M$ such that $z=wy$, or equivalently if for all $w\in W$, the homothety map on $M$ induced by $w$ is surjective.
    \item \emph{h$_W$-divisible} if there is a free $W^{-1}R$-module $N$ and a surjective $R$-linear map $N \onto M$, or if equivalently (see \cite[Lemma 4.1]{nmeYao-flat}) the evaluation map $\Hom_R(W^{-1}R, M) \rightarrow M$ given by $f \mapsto f(1/1)$ is surjective.
    \end{itemize}
\end{defn}

 When used without modifier, typically $R$ is an integral domain and $W=R \setminus \{0\}$. Divisibility (and h-divisibility) are weak forms of injectivity, whereas torsion-freeness is a weak form of flatness, and in general are very important properties of modules.  Hence it behooves us to consider these notions in \subsel\ terms.

\begin{defn}
Let $W \subseteq R$ be a multiplicatively closed set, and $\cM$ a class of modules closed under taking submodules and quotient modules.  \begin{itemize}
    \item We define the \emph{$W$-torsion \subsel} $\tto_W$ by setting $\tto_W(M) := $ the kernel of the localization map $M \rightarrow W^{-1}M$, or equivalently, the set of $z\in M$ such that there exists $w\in W$ such that $wz=0$. Since $W^{-1}M \cong W^{-1}R \otimes_R M$, with the localization map corresponding to the map $z \mapsto 1 \otimes z$, it follows that $\tto_W = \tom_{W^{-1}R}$.
    \item We define the \emph{$W$-divisible \subsel} $\dv_W$ by setting $\dv_W(M) := $ the sum (hence the \emph{union}) of the $W$-divisible submodules of $M$.
\end{itemize}
\end{defn}

\begin{prop}\label{pr:tordiv}
Let $W$ be a multiplicatively closed set.  \begin{enumerate}
    \item An $R$-module $M$ is $W$-torsion free if and only if $\tto_W(M) =0$.
    \item An $R$-module $M$ is $W$-divisible if and only if $\dv_W(M) = M$.
    \item An $R$-module $M$ is h$_W$-divisible if and only if $\tr_{W^{-1}R}(M) = M$.
    \item For any $R$-module $M$, $\tr_{W^{-1}R}(M) \subseteq \dv_W(M)$.  That is, \[\tr_{W^{-1}R} \leq \dv_W.\]
    \item $\displaystyle \tto_W = \varinjlim_{w\in W} \tr_{R/(w)}$.
    \item $\dv_W$ is \idemss\ and functorial.
    \item $\tto_W$ is \idemss, functorial, and \radss.
    \item If $R$ is a complete local Noetherian ring and $M$ is Matlis-dualizable, then $\dv_W(M) = \tr_{W^{-1}R}(M)$.
    \item If $R$ is a complete local Noetherian ring, then when applied to Matlis-dualizable modules, we have $\tto_W = \dv_W^\dual$.
    \item In this case, $\dv_W(M) = \bigcap_{w\in W} wM$.
\end{enumerate}
\end{prop}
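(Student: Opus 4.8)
The plan is to obtain (10) as a formal consequence of the limit description of $\tto_W$ in (5) and the duality in (9), using the compatibility of $\dual$ with direct limits. Dualizing (9) and using that $\dual$ is an involution (Theorem~\ref{thm:smsdual}(1)), we get $\dv_W = \tto_W^\dual$ on Matlis-dualizable modules. On the other hand, (5) writes $\tto_W = \varinjlim_{w \in W} \tr_{R/(w)}$, where $W$ is regarded as a directed poset under divisibility ($w \le w'$ iff $w \mid w'$; this is directed since $w, w' \le ww'$) and the transition maps come from the canonical surjections $R/(w') \onto R/(w)$ for $w \mid w'$.

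Next I would apply Proposition~\ref{pr:duallim1}: since the $\tr_{R/(w)}$ form a directed system of submodule selectors, $\dv_W = \tto_W^\dual = \bigl(\varinjlim_{w} \tr_{R/(w)}\bigr)^\dual = \varprojlim_{w} \bigl(\tr_{R/(w)}^\dual\bigr)$, the inverse limit being over the dual poset (which, being a plain intersection, is order-independent). By Theorem~\ref{thm:tracedual}, $\tr_{R/(w)}^\dual = \tom_{R/(w)}$, and by Remark~\ref{rem:tomfacts}(3) we have $\tom_{R/(w)}(M) = wM$ for every $R$-module $M$. Unwinding the definition of the inverse limit of submodule selectors then gives, for $M$ Matlis-dualizable, $\dv_W(M) = \bigl(\varprojlim_w \tom_{R/(w)}\bigr)(M) = \bigcap_{w \in W} \tom_{R/(w)}(M) = \bigcap_{w \in W} wM$, which is the assertion.

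The substantive content has already been packaged into the limit–duality machinery of Section~\ref{sec:lim}, so the only point requiring genuine care is verifying the hypotheses of Proposition~\ref{pr:duallim1}, i.e.\ that $\{\tr_{R/(w)}\}_{w \in W}$ really is a directed system with $\tr_{R/(w)} \le \tr_{R/(w')}$ along each transition map. This follows from the standard fact that a surjection $L \onto N$ forces $\tr_N \le \tr_L$ (see \cite[Proposition 2.8]{Lin17}), applied to $R/(w') \onto R/(w)$, together with the observation that $R/(w)$ depends only on the ideal $(w)$, so unit multiples cause no ambiguity. Without this machinery one could instead argue directly: the inclusion $\dv_W(M) \subseteq \bigcap_w wM$ is automatic, since any $W$-divisible submodule $N$ satisfies $N = wN \subseteq wM$ for all $w$ (this needs no completeness), whereas the reverse inclusion — that $\bigcap_w wM$ is itself $W$-divisible — would require a Mittag–Leffler/Artin–Rees type argument in the Matlis-dualizable setting; the duality route sidesteps this, and is the main reason the limit formalism was developed.
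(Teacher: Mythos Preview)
Your proof is correct and follows essentially the same route as the paper's own argument, which derives (10) from (5), Proposition~\ref{pr:duallim1}, and Corollary~\ref{cor:dualIM}. The only cosmetic difference is that you invoke Theorem~\ref{thm:tracedual} together with Remark~\ref{rem:tomfacts}(3) in place of Corollary~\ref{cor:dualIM}, but the latter is precisely the special case $\tr_{R/I}^\dual(M) = IM$ packaged from the former, so the two presentations are equivalent.
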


\begin{proof}
Part (1) is by definition.

For part (2), the ``only if'' direction is clear.  Conversely, suppose $\dv_W(M) = M$.  Let $z\in M$ and $w\in W$.  Then there is some $W$-divisible submodule $L$ of $M$ with $z\in L$.  Hence $z\in wL \subseteq wM$, whence the homothety is surjective.

Part (3) follows from the definition and Remark~\ref{rem:tracefacts}(2).

For part (4), let $z\in \tr_{W^{-1}R}(M)$ and $w\in W$.  Then there is some $R$-linear map $g: W^{-1}R \rightarrow M$ with $z=g(1/1)$.  Thus, $z=g(1/1)= g(w/w) = wg(1/w) \in wM$.

For part (5), first note that modules of the form $R/(w)$, $w\in W$ form an inverse limit system, since if $w, w' \in W$, then also $ww' \in W$, and we have $R/(ww') \onto R/(w)$ and $R/(ww') \onto R/(w')$ via the canonical maps.  Hence as in \S\ref{sec:trace}, one can discuss a direct limit of traces.  In particular, for any $R$-module $M$, we have \begin{align*}
\left(\varinjlim_{w\in W} \tr_{R/(w)}\right)(M) &= \bigcup_{w\in W} \tr_{R/(w)}(M) \\
&= \bigcup_{w\in W} (0 :_M w) = \tto_W(M).
\end{align*}

To see part (6), let $z\in \dv_W(M)$.  Then there is some $W$-divisible submodule $L$ of $M$ such that $z\in L$.  But since $L \subseteq \dv_W(M)$, it follows that $z\in \dv_W(\dv_W(M))$. For functoriality, first note that if $g: L \ra M$ is an $R$-linear map and $L'$ a $W$-divisible submodule of $L$, then $g(L')$ is $W$-divisible.  For if $u=g(t) \in g(L')$ and $w\in W$, then since $t\in L'$, there is some $s\in L'$ with $t=ws$, whence $u=g(t) = g(ws) = wg(s) \in wg(L')$.  Now let $z\in \dv_W(L)$. Then there is a $W$-divisible submodule $L'$ of $L$ with $z\in L'$.  But then $g(z) \in g(L')$, which is a $W$-divisible submodule of $M$, whence $g(z) \in \dv_W(M)$. 

To see part (7), let $z\in \tto_W(M)$.  Then there is some $w\in W$ with $wz =0$, and $z\in \tto_W(M)$; hence $z\in \tto_W(\tto_W(M))$.  For functoriality, let $g: L \ra M$ be a map in $\cM$ and $z\in \tto_W(L)$.  Then $wz=0$ for some $w\in W$, so $wg(z) = g(wz) = g(0)=0$, whence $g(z) \in \tto_W(M)$.  For co-idempotence, let $N=M/\tto_W(M)$, and $z\in M$ such that $\bar z\in \tto_W(N)$.  Then for some $w\in W$, $w\bar z = \overline{wz} =\bar 0$, whence $wz \in \tto_W(M)$.  Then there is some $v\in W$ with $vwz=0$, so that $z \in \tto_W(M)$, whence $\bar z = \bar 0$.  Hence $\tto_W(M/\tto_W(M)) = 0$.

For part (8), first recall \cite[Lemma 4.6]{nmeYao-flat} that in this context, a module is divisible if and only if it is h$_W$-divisible.  By part (4), we need only show that $\dv_W(M) \subseteq \tr_{W^{-1}R}(M)$.  Let $z\in \dv_W(M)$.  Since $\dv_W(M)$ is $W$-divisible (by (2) and (6)) and hence h$_W$-divisible (by the result quoted above), there is some $R$-linear map $g: W^{-1}R \ra \dv_W(M)$ with $z=g(1)$.  Let $j: \dv_W(M) \into M$ be the inclusion map.  Then $f:=j \circ g: W^{-1}R \ra M$ is $R$-linear with $z=f(1)$, so $z\in \tr_{W^{-1}R}(M)$.

For part (9), we need only quote part (8) and Corollary~\ref{cor:dualalgebratrace}.

Finally, part (10) follows from part (5), Corollary~\ref{cor:dualIM}, and Proposition~\ref{pr:duallim1}.
\end{proof}

\begin{rem}
It follows that when restricted to Matlis-dualizable modules over complete local Noetherian rings, $\dv_W$ and $\tto_W$ are \radss\ preradicals.  Thus they have associated \reseo\ closure operations.  Namely, \[
L^{\tto_W}_M = \pi^{-1}(\tto_W(M/L)) = \bigcup_{w\in W} (L :_M w)
\] and \[
L^{\dv_W}_M = \pi^{-1}(\dv_W(M/L)) = \bigcap_{w\in W} (L + wM),
\]
where $\pi: M \onto M/L$ is the canonical surjection.
\end{rem}

\section{Special case: tight closure and integral closure}
\label{sec:tcic}

Tight closure inspired much of the study of residual closure operations in commutative algebra. We apply our framework to tight closure and note that it is dual to tight interior as defined in \cite{nmeSc-tint}.

In this section, let $R$ be a commutative Noetherian ring with identity.

\begin{defn}
Let $R$ be a commutative Noetherian ring of prime \charp, $F^e$ denote the $e$th Frobenius functor, and $N \subseteq M$ be $R$-modules. Let $^eM$ denote the set of elements of $M$ with the same additive structure as $M$ and the $R-R$ bimodule structure given by $r \cdot (^ex) \cdot s={}^e(r^qsx)$ for $r,s \in R$ and ${}^ex \in {}^eM$. We define $N_M^{[q]}$ to be the kernel of the map $F^e(M) \to {F^e(M/N)}$ and $u^q:=1 \otimes_R u \in {}^eR \otimes_R M$. The \textit{tight closure of $N$ in $M$} is the set of $u \in M$ such that there exists a $c \in R^\circ$ with
\[cu^q \in N_M^{[q]}\]
for all $q=p^e \gg 0$ \cite[Section 8]{HHmain}.

Assume further that $R_{\text{red}}$ is $F$-finite. For $q_0$ a power of $p$ and $c \in R^\circ$, let
\[M_*[c,q_0]:=\sum_{q \ge q_0} \im(\Hom_R(^e(R),M) \to M),\]
where the map sends $g:{}^e R \to M$ to $g({}^e c)$. The \textit{tight interior of $M$}, denoted $M_*$, is
\[M_*:=\bigcap_{c \in R^\circ} \bigcap_{q_0 \ge 1} M_*[c,q_0]\]
\cite{nmeSc-tint}.
\end{defn}

\begin{prop}\label{pr:tint}
Let $R$ be a complete local Noetherian ring of \charp\ and let $\alpha(M)=0_M^*$, where $*$ denotes tight closure. 
\begin{enumerate}
    \item $\alpha$ is \funcss\ and \radss.
    \item If $R$ is complete local and reduced, then viewing $\alpha$ as a submodule selector on \fg\ and Artinian $R$-modules, $\alpha^\dual$ is tight interior.
    \item Tight interior is \funcss\ and \idemss.
\end{enumerate}
\end{prop}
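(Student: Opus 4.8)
The plan is to obtain all three parts from the classical structure theory of tight closure, the abstract duality of Sections~\ref{sec:first}--\ref{sec:cidual}, and \cite{nmeSc-tint} for the one genuinely new identification (part~(2)). For part~(1), I would invoke the classical facts (see \cite{HHmain}) that tight closure of submodules is a \reseo\ closure operation which is moreover \opamb\ (hence functorial as a residual operation). Residuality is immediate from the definition, since $N^{[q]}_M=\ker(F^e(M)\to F^e(M/N))$ forces $N^*_M=\pi^{-1}(0^*_{M/N})$ for the natural surjection $\pi\colon M\onto M/N$; independence of the ambient module (the \opamb\ property) is the usual diagram chase comparing $F^e(M)\to F^e(M/N)$ with $F^e(N)\to F^e(N/N)$. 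Writing $r$ for this residual operation we have $\alpha=\sigma(r)$, so Proposition~\ref{pr:ssres} applies: since $r$ is \opsub, part~(\ref{it:ressf}) makes $\alpha$ \surfuncss; since $r$ is \opamb, part~(\ref{it:resop}) makes $\alpha$ order-preserving; and since $r$ is \idemeo, part~(\ref{it:resrad}) makes $\alpha$ \radss. The first two together say $\alpha$ is functorial. (Both statements also admit direct one-line proofs: functoriality since applying $F^e(g)$ to a tight-closure relation $c\,u^q=0$ in $F^e(M)$ gives $c\,g(u)^q=0$ in $F^e(N)$; co-idempotence since residuality reduces the identity $0^*_{M/0^*_M}=0$ to the idempotence $(0^*_M)^*_M=0^*_M$ of tight closure.)

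For part~(2), I would start from the alternate description of the smile-dual in Theorem~\ref{alternatedualdescription}: for a Matlis-dualizable $M$, $\alpha^\dual(M)=\{z\in M: g(z)=0\text{ for all }g\in 0^*_{M^\vee}\}=(M^\vee/0^*_{M^\vee})^\vee$. For finitely generated $M$ over a complete local reduced $R$ this is exactly the description of the tight interior $M_*$ proved in \cite{nmeSc-tint}, so the content here is to cite that result and verify that the Matlis duality used there agrees with $^\dual$. For Artinian $M$ one reduces to the finitely generated case: $M^\vee$ is then finitely generated, and either the involutivity $\alpha^{\dual\dual}=\alpha$ of Theorem~\ref{thm:smsdual}(\ref{it:duality}) together with Matlis duality, or a direct comparison of the defining sum for $M_*$ with $(M^\vee/0^*_{M^\vee})^\vee$, yields the claim.

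For part~(3), by part~(1) the selector $\alpha$ is functorial (hence \surfuncss\ and order-preserving) and \radss. Theorem~\ref{thm:smsdual}(\ref{it:funcdual}) applied to $\alpha$ shows $\alpha^\dual$ is order-preserving; applied to $\alpha^\dual$ (using $\alpha^{\dual\dual}=\alpha$) it shows $\alpha^\dual$ is \surfuncss; so $\alpha^\dual$ is functorial. Theorem~\ref{thm:smsdual}(\ref{it:idemraddual}) applied to $\alpha^\dual$, again via $\alpha^{\dual\dual}=\alpha$, shows $\alpha^\dual$ is \idemss\ because $\alpha$ is \radss. Part~(2) identifies $\alpha^\dual$ with tight interior, so tight interior is functorial and idempotent. (Functoriality can also be read directly off the definition of $M_*$: post-composing a map ${}^eR\to M$ with $M\to N$ lands in the analogous sum for $N$.)

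The step I expect to be the main obstacle is part~(2). Parts~(1) and~(3) are essentially formal once the framework of Sections~\ref{sec:first}--\ref{sec:cidual} and the standard facts about tight closure are available, but part~(2) carries the real content: reconciling the Matlis-dual formula $\alpha^\dual(M)=(M^\vee/0^*_{M^\vee})^\vee$ with the Frobenius--Hom description of tight interior from \cite{nmeSc-tint}, and handling the Artinian modules.
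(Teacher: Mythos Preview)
Your treatment of parts (1) and (2) matches the paper's: both cite \cite{HHmain} for the standard properties of tight closure and feed them through Proposition~\ref{pr:ssres}, and both rely on \cite{nmeSc-tint} for the identification of $\alpha^\dual$ with tight interior (the paper simply cites \cite[Corollary~4.6]{nmeSc-tint}).

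For part (3) you take a different route, and there is a small gap. The paper cites \cite{nmeSc-tint} directly---Lemma~2.1 for functoriality and Proposition~2.7 for idempotence---and these results hold in the generality in which tight interior is defined there, without assuming $R$ is reduced. Your argument instead deduces that $\alpha^\dual$ is functorial and idempotent from part~(1) via Theorem~\ref{thm:smsdual}, and then invokes part~(2) to transfer these properties to tight interior. But part~(2) carries the extra hypothesis that $R$ is reduced, so your proof of part~(3) only establishes the conclusion under that additional assumption. Your parenthetical remark that functoriality can be read off the definition of $M_*$ rescues the functoriality half, but idempotence still passes through the identification $\alpha^\dual = (-)_*$ and hence through reducedness. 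To close the gap you should either cite \cite{nmeSc-tint} directly for idempotence, as the paper does, or supply an independent argument not routed through part~(2).
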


\begin{proof}
\begin{enumerate}
    \item \cite[Section 8]{HHmain} tells us that tight closure is functorial and idempotent. The result then follows from Proposition \ref{pr:ssres}.
    \item \cite[Corollary 4.6]{nmeSc-tint}.
    \item \label{tightinteriorprops} These are in \cite{nmeSc-tint}: \funcss\ is Lemma 2.1, \idemss\ is Proposition 2.7, and localization is Corollary 2.11.
\end{enumerate}
\end{proof}

We recover the following well-known result about tight closure from our framework.

\begin{prop}
Let $(R,m)$ be a complete local Noetherian ring of \charp. Then tight closure satisfies the hypotheses of Theorem \ref{thm:preradical}, and hence its conclusions.
\end{prop}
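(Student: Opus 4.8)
The plan is to verify each of the hypotheses of Theorem~\ref{thm:preradical} for the preradical $\alpha(M) = 0^*_M$, with $R$ a complete local Noetherian ring of \charp. By Proposition~\ref{pr:tint}(1), $\alpha$ is \funcss\ (hence a preradical) and \radss, so $\alpha$ restricts to a preradical on the category $\cA$ of Artinian $R$-modules, and Theorem~\ref{thm:preradical} applies in its first part (the chain of containments involving $\alpha^\dual(R)$, $\alpha_f^\dual(R)$, and the various common annihilators) without further work. The only remaining issues are (a) whether the last containment there is an equality, which by the theorem holds whenever $R$ is approximately Gorenstein, and (b) whether the closure operation $\cl = *$ satisfies the final hypothesis of the theorem, namely that tight closure commutes with certain intersections: for every ideal $J$ there is a family $\{I_\lambda\}$ of $\m$-primary ideals with $J = \bigcap_\lambda I_\lambda$ and $J^* = \bigcap_\lambda I_\lambda^*$.

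For (a): every complete local Noetherian ring of \charp\ that is excellent and reduced is approximately Gorenstein, and more to the point, the statement of Theorem~\ref{thm:preradical} only asserts equality \emph{when} $R$ is approximately Gorenstein, so I would simply note that tight closure is defined and the theorem's conclusions hold verbatim, with the final equality being conditional exactly as in the theorem statement. (Alternatively one recalls that every reduced complete local ring, and every complete local ring of depth $\geq 2$, is approximately Gorenstein by Hochster's theorem, so in those cases the final containment is an equality.)

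For (b): this is exactly the content of Remark~\ref{rem:tcicmprimary}, which records that $I^* = \bigcap_n (I + \m^n)^*$ by \cite[Theorem 1.5(4)]{HuTC}. So taking $\{I_\lambda\}_{\lambda \in \N}$ to be the family $\{I + \m^n\}_{n \in \N}$ — each of which is $\m$-primary, with $\bigcap_n (I + \m^n) = I$ by the Krull intersection theorem — the required hypothesis is satisfied. Consequently the final statement of Theorem~\ref{thm:preradical} also applies, giving $\bigcap_{\lambda(R/I) < \infty} \ann(0^*_{R/I}) = \bigcap_{I \text{ ideal of } R}(I : I^*)$.

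The proof is therefore a short bookkeeping argument: assemble Proposition~\ref{pr:tint}(1), Remark~\ref{rem:tcicmprimary}, and (for the conditional equality) Hochster's approximate Gorenstein theorem, and invoke Theorem~\ref{thm:preradical}. I do not anticipate a genuine obstacle; the only point requiring care is making sure the module category in which $\alpha$ is taken to be a preradical is the Artinian one (so that Theorem~\ref{thm:preradical} applies as stated), which is fine since tight closure of $0$ in Artinian modules is well-behaved and functoriality/co-idempotence pass to the restriction.
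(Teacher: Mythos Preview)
Your proposal is correct and follows essentially the same approach as the paper, which simply cites Remark~\ref{rem:tcicmprimary} and Theorem~\ref{thm:preradical}. You are just more explicit: you spell out that Proposition~\ref{pr:tint}(1) guarantees $\alpha$ is a preradical (so Theorem~\ref{thm:preradical} applies at all), and you correctly observe that the approximately Gorenstein clause is a conditional conclusion of the theorem rather than a hypothesis to be verified.
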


\begin{proof}
This follows from Remark \ref{rem:tcicmprimary} and Theorem \ref{thm:preradical}.
\end{proof}

Another commonly used closure operation is integral closure (see, for example \cite{HuSw-book}). We use here the residual version of integral closure, defined in \cite{nmeUlr-lint}:

\begin{defn}[{\cite{nmeUlr-lint}}]
Let $L \subseteq F$ be $R$-modules, where $F$ is free. Let $S$ be the symmetric algebra over $R$ defined by $F$, with the natural grading, and let $T$ be the subring of $S$ induced by the inclusion $L \subseteq F$. Note that $S$ is $\N$-graded and generated in degree 1 over $R$, and $T$ is an $\N$-graded subring of $S$, also generated over $R$ in degree 1. The \textit{integral closure of $L$ in $F$}, denoted $L_F^-$ is the degree 1 part of the integral closure of the subring $T$ of $S$.

Now let $L \subseteq M$ be $R$-modules and $\pi:F \to M$ a surjection of a free module $F$ onto $M$. Let $K=\pi^{-1}(L)$. The \textit{liftable integral closure of $L$ in $M$} is
\[L_M^\li := \pi(K_F^\li).\]
\end{defn}

\begin{rem}
In \cite[Proposition 2.2]{nmeUlr-lint}, Epstein and Ulrich show that this definition is independent of the choice of free module $F$ and surjection $\pi:F \to M$.
\end{rem}

\begin{prop}
\label{prop:liftableintegralclosureprops}
Liftable integral closure is residual, \idemeo, \surfunceo, and order-preserving on ambient modules and on submodules. Hence setting $\alpha(M)=0_M^\li$, $\alpha$ is \funcss\ and \radss.
\end{prop}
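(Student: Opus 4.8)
Liftable integral closure is designed so that every property we want reduces to the free case, and the free case reduces to standard facts about integral closure of $\N$-graded subrings of a symmetric algebra. Throughout I will lean on \cite[Proposition 2.2]{nmeUlr-lint}, which says $L^{\li}_M$ may be computed from \emph{any} free presentation $\pi\colon F \onto M$ (with $K := \pi^{-1}(L)$ and $L^{\li}_M = \pi(K^-_F)$), and on the elementary fact that a ring homomorphism carries elements integral over a subring $A'$ to elements integral over the image of $A'$, hence over any larger subring. First I would record three facts in the free case. Fix $L \subseteq F$ with $F$ free, write $S = \mathrm{Sym}_R(F)$, let $T_L \subseteq S$ be the $R$-subalgebra generated by $L$ in degree $1$, and $\overline{T_L}$ its integral closure in $S$, so $L^-_F = (\overline{T_L})_1$ and $L \subseteq L^-_F$. (i) If $L \subseteq N \subseteq F$ then $T_L \subseteq T_N$, so $\overline{T_L} \subseteq \overline{T_N}$, giving $L^-_F \subseteq N^-_F$. (ii) A map $\psi\colon F \to G$ of free modules induces a graded $R$-algebra map $S \to \mathrm{Sym}_R(G)$ carrying $T_L$ into $T_{\psi(L)}$; since algebra maps preserve integrality it carries $\overline{T_L}$ into $\overline{T_{\psi(L)}}$, and taking degree-$1$ parts gives $\psi(L^-_F) \subseteq \psi(L)^-_G$. (iii) Because $\overline{T_L}$ is integrally closed in $S$ and the subring $T'$ generated by $(\overline{T_L})_1$ satisfies $T_L \subseteq T' \subseteq \overline{T_L}$, one has $\overline{T'} = \overline{T_L}$; taking degree-$1$ parts gives $(L^-_F)^-_F = L^-_F$.

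Next I would lift to an arbitrary module $M$ via a fixed free presentation $\pi\colon F \onto M$, putting $K := \pi^{-1}(L)$ and noting $\ker\pi \subseteq K \subseteq K^-_F$. For \textbf{residuality}: given a surjection $q\colon M \onto P$, the composite $q\pi$ is a free presentation of $P$ with kernel $\pi^{-1}(\ker q) =: K'$, so $0^{\li}_P = (q\pi)((K')^-_F) = q\bigl(\pi((K')^-_F)\bigr) = q\bigl((\ker q)^{\li}_M\bigr)$; since $\ker q \subseteq (\ker q)^{\li}_M$, applying $q^{-1}$ yields $(\ker q)^{\li}_M = q^{-1}(0^{\li}_P)$. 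For \textbf{idempotence}: $\pi^{-1}(L^{\li}_M) = K^-_F + \ker\pi = K^-_F$, so using the same presentation $\pi$ and (iii), $(L^{\li}_M)^{\li}_M = \pi\bigl((K^-_F)^-_F\bigr) = \pi(K^-_F) = L^{\li}_M$. For \textbf{order-preservation on submodules}: if $L \subseteq N \subseteq M$ then $\pi^{-1}(L) \subseteq \pi^{-1}(N)$, so (i) gives $L^{\li}_M = \pi((\pi^{-1}L)^-_F) \subseteq \pi((\pi^{-1}N)^-_F) = N^{\li}_M$.

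For \textbf{functoriality} (whence \surfunceo\ by restricting to surjections, and \opamb\ by applying it to the inclusion $M \hookrightarrow N$): given $g\colon M \to N$, choose a free presentation $\rho\colon G \onto N$ and, since $F$ is projective, a lift $\tilde g\colon F \to G$ with $\rho\tilde g = g\pi$. Then $\rho(\tilde g(K)) = g(\pi(K)) = g(L)$, so $\tilde g(K) \subseteq \rho^{-1}(g(L))$; combining (ii) and (i), $g(L^{\li}_M) = \rho(\tilde g(K^-_F)) \subseteq \rho\bigl(\tilde g(K)^-_G\bigr) \subseteq \rho\bigl((\rho^{-1}(g(L)))^-_G\bigr) = g(L)^{\li}_N$. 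This establishes that liftable integral closure is residual, \idemeo, \surfunceo, \opamb, and \opsub. The final sentence is then immediate from Proposition~\ref{pr:ssres}: writing $\alpha = \sigma(r)$ for $r$ the liftable integral closure (so $r = \rho(\alpha)$), part (1) turns \opamb\ into order-preservation of $\alpha$, part (2) turns \opsub\ (equivalently \surfunceo) into surjection-functoriality of $\alpha$, so $\alpha$ is \funcss, and part (3) turns idempotence of $r$ into the \radss\ property of $\alpha$.

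The step I expect to be the main obstacle is the functoriality argument, and inside it the bookkeeping of free presentations: one must produce $\rho$, $\tilde g$, and the inclusion $\tilde g(K) \subseteq \rho^{-1}(g(L))$ compatibly enough that the free-case statements (i) and (ii) apply verbatim, and one is implicitly invoking well-definedness (\cite[Proposition 2.2]{nmeUlr-lint}) at several points to pass between presentations — in particular to know that $\rho((\rho^{-1}(g(L)))^-_G)$ really is $g(L)^{\li}_N$ and that the recomputation of $(L^{\li}_M)^{\li}_M$ using $\pi$ is legitimate. Once well-definedness and the three free-case facts are in hand, the remaining reductions are formal.
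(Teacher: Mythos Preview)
Your argument is correct. The paper's own proof is a two-line citation: the first sentence is dispatched by quoting \cite[Proposition 2.4]{nmeUlr-lint} wholesale, and the second by Proposition~\ref{pr:ssres}, exactly as you do. What you have done differently is unpack the cited Proposition~2.4 into a self-contained argument, reducing everything to the three free-case facts (i)--(iii) and the well-definedness statement \cite[Proposition 2.2]{nmeUlr-lint}; this buys independence from the external reference and makes transparent \emph{why} the liftable version inherits each property from ordinary integral closure in symmetric algebras, at the cost of several paragraphs where the paper spends one line. Your bookkeeping in the functoriality step (lifting $g$ to $\tilde g$, using $\tilde g(K) \subseteq \rho^{-1}(g(L))$, and chaining (ii) then (i)) is exactly the right maneuver, and the point you flag as the main obstacle --- that each invocation of $L^{\li}_M$ via a particular presentation is legitimate --- is indeed handled by the well-definedness result you cite.
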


\begin{proof}
The first line follows from \cite[Proposition 2.4]{nmeUlr-lint}, and the second from Proposition \ref{pr:ssres}.
\end{proof}

\begin{prop}
Let $R$ be a complete local Noetherian ring. Setting $\alpha(M)=0_M^\li$, $\alpha$ satisfies the hypotheses of Theorem \ref{thm:preradical}, and hence its conclusions. In particular, 
\[\alpha^\dual(R)=\ann(\alpha(E))=\ann(0_E^\li)\]
and
\[\alpha^\dual_f(R)=\ann(\alpha_f(E))=\bigcap_{I \text{ an ideal}} (I:I^-).\]
This implies that $\alpha^\dual(R)$ agrees with $\tau_{\cM}$ as defined in \cite{nmeUlr-lint}, and $\alpha^\dual_f(R)$ agrees with $\tau_{\mathcal{I}}$.
\end{prop}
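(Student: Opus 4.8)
The plan is to apply Theorem~\ref{thm:preradical} to the preradical $\alpha(M) = 0_M^\li$ on the category of Artinian $R$-modules. First I would verify the hypotheses of that theorem: by Proposition~\ref{prop:liftableintegralclosureprops}, $\alpha$ is \funcss\ and \radss, hence a preradical in the sense required, and liftable integral closure $\li$ is a residual closure operation by the same proposition. The remaining hypothesis needed for the final (``test ideal'') statement of Theorem~\ref{thm:preradical} is that for any ideal $J$ there is a family $\{I_\lambda\}$ of $\m$-primary ideals with $J = \bigcap_\lambda I_\lambda$ and $J^\li = \bigcap_\lambda I_\lambda^\li$; this is exactly what is recorded in Remark~\ref{rem:tcicmprimary}, citing \cite[Corollary 6.8.5]{HuSw-book} (here $L^\li = L^-$ for ideals). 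So all hypotheses hold, and Theorem~\ref{thm:preradical} gives
\[
\alpha^\dual(R) = \ann(\alpha(E)) = \ann(0_E^\li),
\qquad
\alpha_f^\dual(R) = \ann(\alpha_f(E)) = \bigcap_{\lambda(R/I)<\infty} \ann(\alpha(R/I)),
\]
and the last intersection in turn equals $\bigcap_{I} (I : I^-)$ over all ideals $I$, again by Theorem~\ref{thm:preradical} together with Remark~\ref{rem:tcicmprimary}.

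Next I would identify these expressions with the invariants $\tau_{\cM}$ and $\tau_{\mathcal{I}}$ of \cite{nmeUlr-lint}. The point is that $\tau_{\cM}$ was \emph{defined} in that paper as the annihilator of $0_E^\li$ (the interior/test-ideal construction attached to liftable integral closure via Matlis duality), and $\tau_{\mathcal{I}}$ as $\bigcap_I (I : I^-)$, the ``ideal-theoretic'' test ideal; so the identification is essentially a matter of matching our notation $\alpha^\dual(R)$, $\alpha_f^\dual(R)$ against their definitions. The one subtlety is that $\alpha^\dual$ here is computed as a submodule selector on the category of \fg\ \emph{and} Artinian modules, so I would note—exactly as in Proposition~\ref{pr:tint}(2) for tight closure—that Matlis duality interchanges these two categories over a complete local ring, so the value $\alpha^\dual(R)$ is unambiguous, and that $R$ and $E$ are Matlis dual, making $\ann(\alpha(E))$ a submodule (ideal) of $R = E^\vee$.

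The main obstacle, such as it is, is purely bookkeeping: confirming that the hypothesis ``$\cl$ defined on artinian modules and on all ideals'' in the last part of Theorem~\ref{thm:preradical} is met—i.e. that liftable integral closure of $0$ in Artinian modules and integral closure of ideals are instances of a single residual closure operation. This follows because $\li$ is defined for all $R$-modules in \cite{nmeUlr-lint} (with $L^\li = L^-$ for submodules of free modules, in particular for ideals of $R$), and is residual by Proposition~\ref{prop:liftableintegralclosureprops}; since it is residual, its restriction to Artinian modules determines, and is determined by, its values $0_M^\li$, and these are compatible with the ideal values via the residual property. Once that compatibility is observed, the two displayed equalities and the final sentence are immediate consequences of Theorem~\ref{thm:preradical} and the definitions in \cite{nmeUlr-lint}, so no real computation is required.
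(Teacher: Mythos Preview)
Your proposal is correct and follows essentially the same approach as the paper: the paper's proof consists solely of citing Remark~\ref{rem:tcicmprimary}, Theorem~\ref{thm:preradical}, and Definition/Proposition~3.1 of \cite{nmeUlr-lint}, and you have simply unpacked what each of those citations contributes (together with Proposition~\ref{prop:liftableintegralclosureprops} to confirm $\alpha$ is a preradical).
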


\begin{proof}
This follows from Remark \ref{rem:tcicmprimary}, Theorem \ref{thm:preradical}, and Definition/Proposition 3.1 of \cite{nmeUlr-lint}.
\end{proof}

As a consequence, we know the following about $\alpha^\dual(R)$:

\begin{cor}[{\cite{nmeUlr-lint}}]
Let $(R,m)$ be a complete local Noetherian ring.
\begin{enumerate}
\item If $\dim(R)=0$, then $\alpha(M)=mM$ and $\alpha^\dual(R)=\Soc(R)$ \cite[Propositions 4.1 and 4.2]{nmeUlr-lint}.
\item If $\dim(R)=1$ and $R$ has an infinite residue field, then $R$ is Cohen-Macaulay if and only if $\alpha^\dual_f(R)$ is equal to the conductor of $R$ in its integral closure. Otherwise the conductor is equal to $R$ whereas $\alpha^\dual_f(R)$ is proper \cite[Theorem 4.4]{nmeUlr-lint}.
\item If $R$ is either excellent or the homomorphic image of a Gorenstein local ring, $\dim(R) \ge 2$, and $R$ is equidimensional with no embedded primes, then $\alpha^\dual(R)=\alpha^\dual_f(R)=0$ \cite[Theorem 4.5]{nmeUlr-lint}.
\end{enumerate}
\end{cor}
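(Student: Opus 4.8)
The plan is to reduce all three items to the identifications established in the immediately preceding Proposition: under its hypotheses one has $\alpha^\dual(R) = \tau_{\mathcal{M}}$ and $\alpha^\dual_f(R) = \tau_{\mathcal{I}}$ in the notation of \cite{nmeUlr-lint}, together with the explicit formulas $\alpha^\dual(R) = \ann(\alpha(E))$ and $\alpha^\dual_f(R) = \ann(\alpha_f(E)) = \bigcap_I (I:I^-)$ coming from Theorem~\ref{thm:preradical}. Once these translations are in hand, each of the three statements is essentially a transcription of a theorem of \cite{nmeUlr-lint}, and the only genuine computation lies in the zero-dimensional case.

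First I would handle (1). By \cite[Proposition 4.1]{nmeUlr-lint}, when $\dim R = 0$ the liftable integral closure of $0$ in an arbitrary $R$-module $M$ equals $\m M$; that is, $\alpha(M) = \m M$, which is the first assertion. Then by Theorem~\ref{thm:preradical} we get $\alpha^\dual(R) = \ann(\alpha(E)) = \ann(\m E)$. Since $R$ is complete local Noetherian, $E$ is faithful (indeed $E^\vee \cong R$, and annihilators are preserved by Matlis duality), so $\ann(\m E) = \{r \in R : r\m = 0\} = (0 :_R \m) = \Soc(R)$, which recovers \cite[Proposition 4.2]{nmeUlr-lint}.

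For (2) and (3) I would proceed purely by citation. For (2), invoke $\alpha^\dual_f(R) = \tau_{\mathcal{I}}$ together with \cite[Theorem 4.4]{nmeUlr-lint}; for (3), invoke $\alpha^\dual(R) = \tau_{\mathcal{M}}$ and $\alpha^\dual_f(R) = \tau_{\mathcal{I}}$ together with \cite[Theorem 4.5]{nmeUlr-lint}. The only thing to verify is that the standing hypotheses of those theorems (infinite residue field in (2); excellent or a homomorphic image of a Gorenstein local ring, dimension at least $2$, equidimensional with no embedded primes in (3)) coincide exactly with the ones assumed in the present statement.

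The sole point requiring care is bookkeeping. One must confirm that the module category over which $\alpha$ and its finitistic version $\alpha_f$ are formed (finitely generated and Artinian $R$-modules) is precisely the one for which the previous Proposition's identifications with $\tau_{\mathcal{M}}$ and $\tau_{\mathcal{I}}$ were established, and that Theorem~\ref{thm:preradical} genuinely applies here — which it does, because by Remark~\ref{rem:tcicmprimary} liftable integral closure satisfies $I^- = \bigcap_n (I+\m^n)^-$, the hypothesis needed for the final displayed equality in that theorem. Beyond the $\dim R = 0$ computation, no new argument is needed.
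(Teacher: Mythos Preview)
Your proposal is correct and matches the paper's approach: the corollary in the paper carries no separate proof at all, merely citing \cite[Propositions 4.1, 4.2, Theorem 4.4, Theorem 4.5]{nmeUlr-lint} inline, exactly as you propose to do for parts (2) and (3), and your computation for (1) simply fills in the one-line translation via Theorem~\ref{thm:preradical} that the paper leaves implicit.
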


\section{Special case: almost ring theory, Heitmann closures and post-2016 mixed characteristic closures}\label{sec:almost}

Next we apply our structure to the example of a mixed characteristic closure operation defined in \cite{PeRG}. This demonstrates how our results can be applied to the almost ring theory setting, and to a closure operation that appears distinct from a module closure.

\begin{defn}
\label{def:ppowerroots}
Let $(R,m)$ be a complete local Noetherian ring of dimension $d>0$ and mixed characteristic $(0,p)$, $T$ an $R$-algebra, and $\pi \in T$ such that $T$ contains a compatible system of $p$-power roots of $\pi$, i.e. a set of elements $\{\pi^{1/p^n}\}_{n \ge 1}$ such that $\left(\pi^{1/p^n}\right)^{p^n}=\pi$ and for $m<n,$ $\left(\pi^{1/p^n}\right)^{p^m}=\pi^{1/p^{n-m}}$. We can denote this system of $p$-power roots of $\pi$ by $\pi^{1/p^\infty}$.
\end{defn}

\begin{defn}[c.f. {\cite{PeRG}}]
\label{def:mixedcharclosure}
Let $R$, $T$, and $\pi$ be as above.  We define a closure operation cl by $u \in N_M^\cl$ if for all $n>0$,
\[ \pi^{1/p^n} \otimes u \in \im(T \otimes N \to T \otimes M).
\]
\end{defn}

We describe the dual of this closure operation below.

\begin{prop}
\label{pr:mixedchardual}
Let $\alpha(M)=0_M^\cl$ with cl the closure operation from Definition \ref{def:mixedcharclosure}. Under the hypotheses of Definition \ref{def:ppowerroots},
let 
\[\beta(M)=\{z \in M : \pi^{1/p^n} \otimes z=0 \in T \otimes_R M \text{ for all } n>0\}.\]
Then 
\[\beta=\alpha^\dual.\]
\end{prop}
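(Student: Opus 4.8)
The plan is to recognize $\alpha^\dual$ as a $\tom$-type preradical and then identify it with $\beta$ using the characterization of $\alpha^\dual$ in terms of vanishing of all elements of $\alpha(M^\vee)$ given in Theorem~\ref{alternatedualdescription}, exactly paralleling the proof of Theorem~\ref{thm:tracedual}. First I would unwind the definitions: $\alpha = \sigma(\cl)$ is the submodule selector $M \mapsto 0_M^\cl$, where by Definition~\ref{def:mixedcharclosure}, $z \in 0_M^\cl$ iff for all $n>0$, $\pi^{1/p^n}\otimes z \in \im(T\otimes 0 \to T \otimes M) = 0$, i.e.\ $\pi^{1/p^n}\otimes z = 0$ in $T\otimes_R M$. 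Thus $\alpha(M) = \{z \in M : \pi^{1/p^n}\otimes z = 0 \text{ in } T\otimes_R M \text{ for all } n>0\}$, which is precisely $\beta(M)$ applied to a general $M$ --- but note the asymmetry: $\beta$ is what we want $\alpha^\dual$ to be, not $\alpha$ itself. So the real content is showing $\alpha^\dual = \beta$, where $\beta(M) = \bigcap_{n>0}\ker(M \to T\otimes_R M, z\mapsto \pi^{1/p^n}\otimes z)$.

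The key observation is that $\beta = \varprojlim_n \tom_{\pi^{1/p^n}, T}$ (an inverse limit of submodule selectors, since the system $\{\pi^{1/p^n}\}$ gives $\tom_{\pi^{1/p^m},T} \leq \tom_{\pi^{1/p^n},T}$ for $m \leq n$ --- actually one should check the direction of the inequalities carefully against Remark~\ref{rem:tomfacts}). Dually, one expects $\alpha = \varprojlim_n \tom_{\pi^{1/p^n},T}$ directly from the definition above, so that $\alpha^\dual = (\varprojlim_n \tom_{\pi^{1/p^n},T})^\dual = \varinjlim_n \tom_{\pi^{1/p^n},T}^\dual$ by Proposition~\ref{pr:duallim2}, and then $\tom_{\pi^{1/p^n},T}^\dual = \tr_{\pi^{1/p^n},T}$ by Theorem~\ref{thm:tracedual}. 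Hmm --- but this would make $\alpha^\dual$ a \emph{direct limit of traces}, not obviously equal to $\beta$. So the cleanest route is probably \emph{not} via limits but via a direct argument mirroring Theorem~\ref{thm:tracedual}: show $\alpha^\dual = \tom_{\{\pi^{1/p^n} : n>0\}, T}$ by recognizing that $\alpha = \sigma(\cl)$ coincides with $\tom_{S,T}$ for $S = \{\pi^{1/p^n}: n>0\}$... except again $\alpha$ itself has the form of $\tom_S$, so $\alpha^\dual = (\tom_{S,T})^\dual$, and by Theorem~\ref{thm:tracedual} the \emph{dual} of $\tom_{S,T}$ is... no, Theorem~\ref{thm:tracedual} says $\tr_{S,L}^\dual = \tom_{S,L}$, hence by Theorem~\ref{thm:smsdual}(\ref{it:duality}) also $\tom_{S,L}^\dual = \tr_{S,L}$. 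That gives $\alpha^\dual = \tr_{S,T}$, which still is not visibly $\beta$.

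Resolving this tension is the main obstacle, and I believe the resolution is that $\beta$ is \emph{again} of $\tom$-form, so one should apply Theorem~\ref{alternatedualdescription} head-on: $\alpha^\dual(M) = \{z \in M : g(z) = 0 \text{ for all } g \in \alpha(M^\vee)\}$ where $\alpha(M^\vee) = \{h \in M^\vee : \pi^{1/p^n}\otimes h = 0 \text{ in } T\otimes_R M^\vee \text{ for all }n\}$. To show this equals $\beta(M) = \{z : \pi^{1/p^n}\otimes z = 0 \text{ in } T\otimes_R M \text{ for all }n\}$, I would argue exactly as in Theorem~\ref{thm:tracedual}: for the inclusion $\beta(M)\subseteq \alpha^\dual(M)$, given $z$ with $\pi^{1/p^n}\otimes z = 0$ and given $g \in \alpha(M^\vee) = \tom_{S,T}(M^\vee) = \tr_{S,T}^\dual(M^\vee)$... wait, $\tom = \tr^\dual$, so $g$ being in $\tom_{S,T}(M^\vee)$ means precisely (by Theorem~\ref{alternatedualdescription} applied one level down, or by the defining property of $\tom$) that $g$ kills everything coming through $T$-traces; pair this with $\pi^{1/p^n}\otimes z = 0$ via Hom-tensor adjointness to get $g(z) = 0$. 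For the reverse inclusion, if $\pi^{1/p^n}\otimes z \neq 0$ in $T \otimes_R M$ for some $n$, produce via injectivity of $E$ a map $\phi: T\otimes_R M \to E$ not killing it, transpose to $\psi: T \to M^\vee$, observe $\psi(\pi^{1/p^n}) \in \tr_{\pi^{1/p^n},T}(M^\vee) \subseteq \alpha(M^\vee)$ (using $\pi^{1/p^m}\otimes \psi(\pi^{1/p^n}) = 0$? --- this needs $\psi(\pi^{1/p^n})$ to lie in $\alpha(M^\vee) = \tom_{S,T}(M^\vee)$, which requires checking $\pi^{1/p^m} \otimes \psi(\pi^{1/p^n}) = 0$ in $T \otimes_R M^\vee$ for all $m$; this should follow because $\psi(\pi^{1/p^n}) = \psi$ applied to a trace-element and $\tom \circ \tr$-type composites vanish, i.e.\ $\tr_{S,T}(M^\vee) \subseteq \tom_{S,T}(M^\vee)$ would need $s' \otimes \tr$-image $= 0$, which holds iff $\Hom_R(T,M^\vee) $-images are killed by $s'\otimes -$; this is the genuinely delicate point and may require that $T \otimes_R T$ has the property that $s \otimes s'$ maps to a predictable element --- concretely $\pi^{1/p^m}\otimes \pi^{1/p^n}$ involves the multiplicative structure, so I expect one uses that $\pi^{1/p^n}$ for large $n$ dominates). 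If that subtlety does not resolve cleanly, the fallback is the limit approach: establish $\alpha = \varprojlim_n \tom_{\pi^{1/p^n},T}$ directly from Definition~\ref{def:mixedcharclosure}, conclude $\alpha^\dual = \varinjlim_n \tr_{\pi^{1/p^n},T}$ by Propositions~\ref{pr:duallim2} and Theorem~\ref{thm:tracedual}, and then separately prove $\beta = \varinjlim_n \tr_{\pi^{1/p^n},T}$ on Matlis-dualizable modules by a direct computation; I expect \emph{this} identification of $\beta$ with a direct limit of traces to be the crux, and it likely hinges on the same Hom-tensor adjointness argument as Theorem~\ref{thm:tracedual} combined with the cofinality of the $p$-power root system.
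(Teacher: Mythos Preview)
Your ``fallback'' limit route is exactly the paper's argument: express the selector as a (direct) limit of the single-element selectors $\tr_{\pi^{1/p^n},T}$, apply Proposition~\ref{pr:duallim1} to pass the dual through the limit, and then invoke Theorem~\ref{thm:tracedual} (together with Proposition~\ref{pr:tombc}) termwise to identify each $(\tr_{\pi^{1/p^n},T})^\dual$ with $\tom_{\pi^{1/p^n},T}$.

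The confusion you run into is genuine and not a gap in your reasoning. As you correctly unwind, Definition~\ref{def:mixedcharclosure} at $N=0$ gives $\alpha(M)=0_M^\cl=\{z:\pi^{1/p^n}\otimes z=0\text{ in }T\otimes_R M\text{ for all }n\}$, which is \emph{literally} the set defining $\beta(M)$; so the claim $\beta=\alpha^\dual$ would force $\alpha$ to be self-dual, whereas the limit computation gives $\alpha=\varprojlim_n\tom_{\pi^{1/p^n},T}$ and hence $\alpha^\dual=\varinjlim_n\tr_{\pi^{1/p^n},T}$, the trace sum. The paper's proof carries the same inconsistency: its first sentence asserts $\alpha=\sum_{n>0}\tr_{\pi^{1/p^n},T}$, which is incompatible with $\alpha(M)=0_M^\cl$, and the remainder then shows that the dual of this trace sum equals $\beta$. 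The remark and Theorem~\ref{perezrg} immediately following confirm that the intended conclusion is $\alpha^\dual=\varinjlim_n\tr_{\pi^{1/p^n},T}$. So either the formula for $\beta$ in the statement or the opening line of the proof is a slip; once that is corrected, your limit argument is the paper's argument and goes through cleanly, and your attempts to establish $\alpha^\dual=\beta$ directly via Theorem~\ref{alternatedualdescription} were bound to stall because that identity, as literally written, is not what the machinery produces.
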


\begin{proof}
By Proposition \ref{pr:trbc} we have $\alpha=\sum_{n>0} \tr_{\pi^{1/p^n},T}$. Then $\rho=\varinjlim_n \tr_{\pi^{1/p^n},T}$. By Proposition \ref{pr:duallim1}, 
\[\rho^\dual=\varprojlim_n \left(\tr_{\pi^{1/p^n},T}\right)^\dual.\]
So it suffices to show that 
\[\left(\tr_{\pi^{1/p^n},T}\right)^\dual(M)=\{z \in M : \pi^{1/p^n} \otimes z=0 \in T \otimes_R M\}.\]
By Theorem \ref{thm:tracedual}, we have
\[\left(\tr_{\pi^{1/p^n},T}\right)^\dual=\tom_{\pi^{1/p^n},T}.\]
By Proposition \ref{pr:tombc}, this gives us the desired result.
\end{proof}

\begin{rem}
With cl as above, $\ann_R (0_E^\cl)=\varinjlim_n \tr_{\pi^{1/p^n},T}(R)$ by Theorem \ref{thm:preradical}.
\end{rem}

In \cite{PeRG}, P\'{e}rez and the second named author studied this closure operation in the context of almost balanced big \CM\ algebras.

\begin{defn}[{\cite{An-DS}}]
$T$ is an \textit{almost (balanced) big \CM\ algebra with respect to $\pi^{1/{p^\infty}}$} if $T/mT$ is not almost 0 with respect to $\pi^{1/{p^\infty}}$ (i.e., it is not the case that $\pi^{1/p^n}T/mT=0$ for all $n \gg 0$), and for every \sop\ $x_1,\ldots,x_d$ on $R$,
\[ \pi^{1/p^n} \cdot \frac{(x_1,\ldots,x_i):_T x_{i+1}}{(x_1,\ldots,x_i)}=0
\]
for all $n>0$, $0 \le i \le d-1$.
\end{defn}

The following result now follows immediately from Proposition \ref{pr:mixedchardual}.

\begin{thm}[{\cite[Theorem 6.5]{PeRG}}]
\label{perezrg}
Let $R$ be a complete local domain, cl be as above, and additionally assume that $T$ is $p$-torsion free, $\pi$ is a \nzd, and $T$ is an almost balanced big \CM\ algebra. Then
\[\ann_R (0_E^\cl)=\sum_{n>0} \sum_{\psi:T \to R} \psi(\pi^{1/p^n}T)=\sum_{n>0} \im(T^* \otimes \pi^{1/p^n}T \to R),\]
where the map sends $\psi \otimes \pi^{1/p^n}t \mapsto \psi(\pi^{1/p^n} t)$.
\end{thm}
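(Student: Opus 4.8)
The plan is to derive the identity directly from the remark immediately preceding this theorem, which records that $\ann_R(0_E^\cl) = \varinjlim_n \tr_{\pi^{1/p^n},T}(R)$; equivalently, this is Theorem~\ref{thm:preradical} (first displayed equality) applied to the preradical $\sigma(\cl)$, which by Proposition~\ref{pr:mixedchardual} equals $\tom_{S,T}$ with $S = \{\pi^{1/p^n} : n>0\}$, whose smile-dual is $\tr_{S,T} = \varinjlim_n \tr_{\pi^{1/p^n},T}$ by Theorem~\ref{thm:tracedual}. None of the added hypotheses ($R$ a domain, $T$ $p$-torsion free, $\pi$ a \nzd, $T$ an almost balanced big \CM\ algebra) actually enters this derivation; they are carried over from \cite{PeRG}, and the stated identity in fact holds under the standing assumptions of this section alone. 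The first point I would verify is that this direct limit is an honest union, hence the sum $\sum_{n>0}\tr_{\pi^{1/p^n},T}(R)$: the system $\{\tr_{\pi^{1/p^n},T}\}_{n}$ is directed, because $\pi^{1/p^n} = (\pi^{1/p^{n+1}})^{p}$ lies in the ideal $\pi^{1/p^{n+1}}T$, so Proposition~\ref{pr:trbc} gives $\tr_{\pi^{1/p^n},T} = \tr_{\pi^{1/p^n}T,\,T} \le \tr_{\pi^{1/p^{n+1}}T,\,T} = \tr_{\pi^{1/p^{n+1}},T}$.

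Next I would rewrite each summand. By the definition of $(X,L)$-trace, $\tr_{\pi^{1/p^n},T}(R) = \sum_{\psi \in \Hom_R(T,R)} R\,\psi(\pi^{1/p^n})$. Applying Proposition~\ref{pr:trbc} once more, this time with the $T$-submodule of $T$ generated by $\pi^{1/p^n}$, namely the ideal $\pi^{1/p^n}T$, I obtain $\tr_{\pi^{1/p^n},T} = \tr_{\pi^{1/p^n}T,\,T}$; and since $\pi^{1/p^n}T$ is an $R$-submodule of $T$ and each $\psi$ is $R$-linear, the image $\psi(\pi^{1/p^n}T)$ is already an $R$-submodule of $R$, so $\tr_{\pi^{1/p^n}T,\,T}(R) = \sum_{\psi} \psi(\pi^{1/p^n}T)$. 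Summing over $n$ gives the middle expression $\sum_{n>0}\sum_{\psi : T\to R}\psi(\pi^{1/p^n}T)$.

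Finally, the right-hand expression is a repackaging of the middle one: the image of the evaluation map $T^* \otimes_R \pi^{1/p^n}T \to R$, $\psi \otimes x \mapsto \psi(x)$, is the set of finite sums $\sum_i \psi_i(x_i)$ with $\psi_i \in T^* = \Hom_R(T,R)$ and $x_i \in \pi^{1/p^n}T$, and this set equals the $R$-submodule $\sum_\psi \psi(\pi^{1/p^n}T)$ — one containment is immediate, and for the other, $r\,\psi(x) = \psi(rx)$ with $rx \in \pi^{1/p^n}T$. Summing over $n$ completes the argument. I do not expect a genuine obstacle: the mathematical content has already been absorbed into Theorem~\ref{thm:preradical} and the computation of $\sigma(\cl)^\dual$, and what remains is bookkeeping. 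The only two points that need a moment's care are the directedness of the system $\{\tr_{\pi^{1/p^n},T}\}$ (so that the $\varinjlim$ is a sum) and the use of $R$-linearity of the $\psi$ to collapse $R\,\psi(-)$ to $\psi(-)$.
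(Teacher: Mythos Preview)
Your argument is correct and follows exactly the route the paper indicates: the paper's own ``proof'' is just the one-line remark that the statement follows immediately from Proposition~\ref{pr:mixedchardual} (together with the preceding Remark invoking Theorem~\ref{thm:preradical}), and you have simply unpacked those references. Your observation that the extra hypotheses on $R$, $T$, and $\pi$ are inherited from \cite{PeRG} and play no role in this derivation is also correct.
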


\section{Good localization is dual to good colocalization}\label{sec:coloc}
 
In this section, $R$ is a commutative Noetherian ring with identity.

We introduce the following concept, with notation and terminology due to Richardson:

\begin{defn}[{\cite{Ri-cosupp}}]
Let $(R,\m)$ be a Noetherian local ring, $\p \in \Spec R$, and $M$ an $R$-module.  Then the \emph{(Richardson) colocalization} of $M$ with respect to $\p$ is the $\widehat{R_\p}$-module \[
{}^\p M := \Hom_R(\Hom_R(M, E_R(R/\m)), E_R(R/\p)).
\]
\end{defn}

This concept had earlier been explored (though without an explicit name, and with different notation (namely ${M^\vee}^{\vee_\p}$) by K. Smith \cite{Sm-param, Smtest}.  We start by recalling the following useful principle.

\begin{lemma}[{\cite[Lemma 3.1(iv)]{Sm-param}}]
\label{smithlemma}
Let $(R,\m)$ be a Noetherian local ring, let $E=E_R(R/\m)$, and let $M$ be an $R$-submodule of $E$.  Then \[
M^\vee \cong \hat{R}/\ann_{\hat{R}}(M).
\]
\end{lemma}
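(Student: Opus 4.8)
The plan is to reduce the statement to Matlis's identification $\Hom_R(E,E)\cong\hat R$ together with the exactness of Matlis duality. First I would record a preliminary observation: every element of $E$ is annihilated by a power of $\m$, so the $\hat R$-action on any $x\in E$ factors through $\hat R/\hat\m^n\cong R/\m^n$ for suitable $n$; consequently $\hat R\cdot x\subseteq R\cdot x$, and hence every $R$-submodule of $E$ is automatically an $\hat R$-submodule. In particular $M$ is an $\hat R$-module and $\ann_{\hat R}(M)$ is a genuine ideal of $\hat R$, so both sides of the claimed isomorphism make sense.

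Next I would apply $\Hom_R(-,E)$ to the inclusion $\iota\colon M\into E$. Since $E$ is an injective $R$-module, $\Hom_R(-,E)$ is exact, so $\iota^\vee\colon E^\vee\onto M^\vee$ is surjective; concretely $\iota^\vee$ is restriction of functionals along $\iota$. Composing with the standard isomorphism $\eta\colon \hat R\xrightarrow{\ \sim\ }E^\vee=\Hom_R(E,E)$, which sends $r$ to the homothety $\mu_r$ (this is the input I would cite from Matlis duality theory), I obtain a surjection $\hat R\onto M^\vee$. Therefore $M^\vee\cong \hat R/K$, where $K=\ker(\iota^\vee\circ\eta)$.

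The last step is to identify $K$. An element $r\in\hat R$ lies in $K$ precisely when $\mu_r|_M=0$, i.e. when $rm=0$ in $E$ for every $m\in M$, i.e. when $r\in\ann_{\hat R}(M)$. Hence $K=\ann_{\hat R}(M)$ and $M^\vee\cong \hat R/\ann_{\hat R}(M)$, as desired. The only place demanding genuine care — and the closest thing to an obstacle — is marshalling the structural input (Matlis's $\Hom_R(E,E)\cong\hat R$, and the compatibility of the $R$- and $\hat R$-module structures on $\m$-power-torsion modules used to make sense of $\ann_{\hat R}(M)$); once those are granted, the remainder is a short diagram chase.
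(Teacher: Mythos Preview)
Your argument is correct and is essentially the standard proof: dualize the inclusion $M\hookrightarrow E$, invoke Matlis's identification $\Hom_R(E,E)\cong\hat R$, and compute the kernel of the resulting surjection $\hat R\onto M^\vee$ as $\ann_{\hat R}(M)$. The paper itself does not supply a proof of this lemma; it simply quotes the statement from \cite[Lemma~3.1(iv)]{Sm-param}, so there is nothing further to compare.
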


Note that when $M$ is an artinian $R$-module, ${}^\p M$ is an artinian $\widehat{R_\p}$-module \cite[Theorem 2.3]{Ri-cosupp}.  Also note that ${}^\p (-)$ is an exact covariant functor on the category of $R$-modules, as it consists of applying $\Hom$ into an injective $R$-module twice.  We have the following:

\begin{thm}\label{thm:colocss}
Let $(R,\m)$ be a complete Noetherian local ring, and let $\p \in \Spec R$.  Let $\sigma$ be a submodule selector on Artinian $R$-modules and $\sigma'$ a submodule selector on artinian $\widehat{R_\p}$-modules.  Let $\tau = \sigma^\dual$ (over $R$) and $\tau' = (\sigma')^\dual$ (over $\widehat{R_\p}$). Then $\tau'(\widehat{N_\p}) = \widehat{\tau(N)_\p}$ for all finitely generated $R$-modules $N$ if and only if $\sigma'({}^\p M) = {}^\p \sigma(M)$ for all Artinian $R$-modules $M$.
\end{thm}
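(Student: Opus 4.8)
The plan is to reduce both conditions to a single statement about the lattice of submodules of the finitely generated $\widehat{R_\p}$-module $\widehat{(M^\vee)_\p}$, using Matlis duality over $\widehat{R_\p}$ as the bridge between colocalization on the ``Artinian'' side and completed localization on the ``finitely generated'' side, and then invoking the Matlis-duality lattice anti-isomorphism over $\widehat{R_\p}$. Throughout, write $(-)^{\vee_\p}:=\Hom_{\widehat{R_\p}}\!\bigl(-,E_{\widehat{R_\p}}(\kappa(\p))\bigr)$ for Matlis duality over $\widehat{R_\p}$, where $\kappa(\p)=R_\p/\p R_\p$.

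\textbf{Step 1 (change of rings).} I would first establish the natural identification
\[
{}^\p M \;\cong\; \bigl(\widehat{(M^\vee)_\p}\bigr)^{\vee_\p}
\qquad\text{for every Artinian $R$-module $M$,}
\]
and hence (applying $(-)^{\vee_\p}$ again, legitimate because $M^\vee$ is finitely generated over $R$, so $\widehat{(M^\vee)_\p}$ is finitely generated over $\widehat{R_\p}$) also $({}^\p M)^{\vee_\p}\cong\widehat{(M^\vee)_\p}$. This follows by combining: (i) $E_R(R/\p)$ is an $R_\p$-module, and as such equals $E_{R_\p}(\kappa(\p))=E_{\widehat{R_\p}}(\kappa(\p))$; (ii) an $R$-linear map into an $R_\p$-module factors uniquely through localization, so $\Hom_R\bigl(M^\vee,E_R(R/\p)\bigr)=\Hom_{R_\p}\bigl((M^\vee)_\p,E_{\widehat{R_\p}}(\kappa(\p))\bigr)$; and (iii) since $(M^\vee)_\p$ is finitely generated over $R_\p$, this last Hom equals $\Hom_{\widehat{R_\p}}\bigl(\widehat{(M^\vee)_\p},E_{\widehat{R_\p}}(\kappa(\p))\bigr)$. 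Each of these is an adjunction-type isomorphism, so the composite is natural in $M$.

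\textbf{Step 2 (rewrite both sides) and Step 3 (lattice anti-isomorphism).} Fix an Artinian $R$-module $M$ and set $N:=M^\vee$, a finitely generated $R$-module with $N^\vee\cong M$. Since $\sigma=\tau^\dual$ by Theorem~\ref{thm:smsdual}(\ref{it:duality}), we get $\sigma(M)=(M^\vee/\tau(M^\vee))^\vee$, hence $\sigma(M)^\vee\cong N/\tau(N)$; applying Step 1 to the Artinian module $\sigma(M)$ and using that $\widehat{(-)_\p}$ is exact on finitely generated $R$-modules gives
\[
{}^\p\sigma(M)\;\cong\;\bigl(\widehat{(\sigma(M)^\vee)_\p}\bigr)^{\vee_\p}\;\cong\;\Bigl(\widehat{N_\p}\big/\widehat{\tau(N)_\p}\Bigr)^{\vee_\p}.
\]
Likewise, since $\sigma'=(\tau')^\dual$ and $({}^\p M)^{\vee_\p}\cong\widehat{N_\p}$ by Step 1,
\[
\sigma'({}^\p M)\;=\;\Bigl(({}^\p M)^{\vee_\p}\big/\tau'\bigl(({}^\p M)^{\vee_\p}\bigr)\Bigr)^{\vee_\p}\;\cong\;\Bigl(\widehat{N_\p}\big/\tau'(\widehat{N_\p})\Bigr)^{\vee_\p},
\]
and by naturality of the identifications both are, canonically, submodules of ${}^\p M\cong(\widehat{N_\p})^{\vee_\p}$. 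Because $\widehat{N_\p}$ is finitely generated over the complete local ring $\widehat{R_\p}$, Matlis duality over $\widehat{R_\p}$ yields an inclusion-reversing bijection $A\mapsto(\widehat{N_\p}/A)^{\vee_\p}$ from submodules of $\widehat{N_\p}$ onto submodules of $(\widehat{N_\p})^{\vee_\p}$; taking $A=\tau'(\widehat{N_\p})$ and $A=\widehat{\tau(N)_\p}$ shows $\sigma'({}^\p M)={}^\p\sigma(M)$ if and only if $\tau'(\widehat{N_\p})=\widehat{\tau(N)_\p}$. Since $M\mapsto M^\vee$ is a bijection between Artinian $R$-modules and finitely generated $R$-modules (with inverse $N\mapsto N^\vee$), the former equality holds for all $M$ exactly when the latter holds for all $N$, which is the assertion.

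I expect the main obstacle to be Step 1: one must carefully track the two-stage change of rings $R\to R_\p\to\widehat{R_\p}$ together with the successive identifications of injective hulls, and — crucially for the rest of the argument — verify that the isomorphism ${}^\p M\cong(\widehat{(M^\vee)_\p})^{\vee_\p}$ is natural enough that the submodules $\sigma'({}^\p M)$ and ${}^\p\sigma(M)$ are carried precisely onto the expected $(-)^{\vee_\p}$-duals of the quotients $\widehat{N_\p}/\tau'(\widehat{N_\p})$ and $\widehat{N_\p}/\widehat{\tau(N)_\p}$ inside $(\widehat{N_\p})^{\vee_\p}$. Once that naturality is in place, Steps 2 and 3 are purely formal consequences of Matlis duality and the bijection $M\leftrightarrow M^\vee$.
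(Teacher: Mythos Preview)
Your proposal is correct and follows essentially the same approach as the paper: both rely on the identification ${}^\p M \cong (\widehat{(M^\vee)_\p})^{\vee_\p}$, the defining relation $\sigma(M)^\vee \cong M^\vee/\tau(M^\vee)$, and Matlis duality over $\widehat{R_\p}$. The only difference is organizational---the paper proves the two directions by separate chains of isomorphisms, while you handle both at once via the lattice anti-isomorphism $A\mapsto(\widehat{N_\p}/A)^{\vee_\p}$, which is a minor (and arguably cleaner) repackaging of the same content.
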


\begin{proof}
Let $^{\vee'}$ denote the Matlis dual operation over the ring $\widehat{R_\p}$.  That is, if $U$ is an $\widehat{R_\p}$-module, then $U^{\vee'} := \Hom_{\widehat{R_\p}}(U, E_R(R/\p))$.

First suppose $\tau'(\widehat{N_\p}) = \widehat{\tau(N)_\p}$ for all finitely generated $R$-modules $N$.  Let $M$ be an Artinian $R$-module.  Then \begin{align*}
{}^\p \sigma(M) &= (\sigma(M)^\vee \otimes_R \widehat{R_\p})^{\vee'} = (\tau^\dual(M)^\vee \otimes_R \widehat{R_\p})^{\vee'} \\
&= \left(\frac{M^\vee}{\tau(M^\vee)} \otimes_R \widehat{R_\p}\right)^{\vee'} \cong \left( \frac{\widehat{(M^\vee)_\p}}{\widehat{\tau(M^\vee)_\p}}\right)^{\vee'} \\
&= (\tau')^\dual\left(\widehat{(M^\vee)_\p}^{\vee'}\right) = \sigma'({}^\p M).
\end{align*}

Conversely, suppose $\sigma'({}^\p M) = {}^\p \sigma(M)$ for all Artinian $R$-modules $M$.  Let $N$ be a finitely generated $R$-module.  Then \begin{align*}
\tau'(\widehat{N_\p}) &= (\sigma')^\dual(\widehat{N_\p}) = \left( \frac{(\widehat{N_\p})^{\vee'}}{\sigma'((\widehat{N_\p})^{\vee'})}\right)^{\vee'} = \left(\frac{{}^\p (N^\vee)} {\sigma'({}^\p (N^\vee))}\right)^{\vee'} \\
&= \left(\frac{{}^\p(N^\vee)} {{}^\p \sigma(N^\vee)}\right)^{\vee'} \cong \left(^\p \left(\frac{N^\vee} {\sigma(N^\vee)}\right)\right)^{\vee'} = ({}^\p (\sigma^\dual(N)^\vee))^{\vee'} \\
&= \widehat{\sigma^\dual(N)_\p} = \widehat{\tau(N)_\p} \qedhere
\end{align*}
\end{proof}

In particular, we have:

\begin{thm}\label{thm:tccoloc}
Let $R$ be a complete Noetherian reduced $F$-finite ring of prime characteristic $p>0$.  Let $M$ be an artinian $R$-module.  Then for any prime ideal $\p$, we have ${}^\p(0^*_M) = 0^*_{{}^\p M}$, i.e., tight closure co-localizes.
\end{thm}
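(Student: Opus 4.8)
The plan is to realize this as a special case of Theorem~\ref{thm:colocss}, applied to the submodule selector given by tight closure of zero. First I would set $\sigma(M) := 0^*_M$ on the category of Artinian $R$-modules and $\sigma'(M) := 0^*_M$ on the category of Artinian $\widehat{R_\p}$-modules; by Proposition~\ref{pr:tint}(1) each of these is a (co-idempotent) preradical, and Artinian modules over a Noetherian ring form a category closed under submodules and quotients on which Matlis duality holds, so the smile duals $\tau := \sigma^\dual$ (over $R$) and $\tau' := (\sigma')^\dual$ (over $\widehat{R_\p}$) are defined. The key identification is that these duals are tight interior: since $R$ is complete local, reduced, and $F$-finite, Proposition~\ref{pr:tint}(2) gives $\tau(M) = M_*$; and since $F$-finiteness and reducedness are inherited by $R_\p$ (an $F$-finite ring is excellent) and then by the completion $\widehat{R_\p}$, the same proposition applied over $\widehat{R_\p}$ gives $\tau'(M) = M_*$, tight interior over $\widehat{R_\p}$.

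Next I would verify the hypothesis of Theorem~\ref{thm:colocss}, namely that $\tau'(\widehat{N_\p}) = \widehat{\tau(N)_\p}$ for every finitely generated $R$-module $N$. This is exactly the statement that tight interior commutes with localization and with completion. Localization of tight interior is \cite[Corollary~2.11]{nmeSc-tint} (recorded in Proposition~\ref{pr:tint}(3)), giving $\tau(N)_\p = (N_\p)_*$; chaining this with compatibility of tight interior with the completion map $R_\p \to \widehat{R_\p}$ yields $\widehat{\tau(N)_\p} = \widehat{(N_\p)_*} = (\widehat{N_\p})_* = \tau'(\widehat{N_\p})$, where along the way one uses that $\tau(N)_\p$ is finitely generated over $R_\p$ so that $\widehat{\tau(N)_\p} \cong \tau(N) \otimes_R \widehat{R_\p}$. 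Feeding this into Theorem~\ref{thm:colocss} gives $\sigma'({}^\p M) = {}^\p \sigma(M)$ for all Artinian $R$-modules $M$, which is precisely $0^*_{{}^\p M} = {}^\p(0^*_M)$.

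I expect the main obstacle to be hypothesis bookkeeping rather than any new idea: one must be sure that tight interior is genuinely defined and behaves as in \cite{nmeSc-tint} over $\widehat{R_\p}$ — i.e. that $\widehat{R_\p}$ is reduced and $F$-finite — and that the localization and completion statements for tight interior combine to give exactly the equality $\tau'(\widehat{N_\p}) = \widehat{\tau(N)_\p}$ for finitely generated $N$ that Theorem~\ref{thm:colocss} requires. If \cite{nmeSc-tint} records localization and completion compatibility only separately, the short chaining step above, together with the standard identification $\widehat{\tau(N)_\p} \cong \tau(N) \otimes_R \widehat{R_\p}$ for finitely generated modules, closes the gap. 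Everything else is a direct application of the duality machinery of Section~\ref{sec:coloc} together with Proposition~\ref{pr:tint}.
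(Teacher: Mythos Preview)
Your proposal is correct and follows essentially the same approach as the paper: set $\sigma$ to be tight closure of zero, identify $\sigma^\dual$ with tight interior via Proposition~\ref{pr:tint}(2), invoke \cite[Corollary~2.11]{nmeSc-tint} to get $\tau'(\widehat{N_\p}) = \widehat{\tau(N)_\p}$, and then apply Theorem~\ref{thm:colocss}. Your version is in fact slightly more careful than the paper's, as you explicitly check that $\widehat{R_\p}$ inherits reducedness and $F$-finiteness so that Proposition~\ref{pr:tint}(2) applies over that ring, and you separate the localization and completion steps; the paper leaves these points implicit.
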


\begin{proof}
By Proposition \ref{pr:tint} (2), tight interior agrees with the dual of tight closure in the sense of this paper.
Recall that in this circumstance, tight interior localizes and commutes with completion \cite[Corollary 2.11]{nmeSc-tint}. That is, for any finitely generated $R$-module $N$, we $\tau'(\widehat{N_\p}) = \widehat{\tau(N)_\p}$, where $\tau$ is tight interior on $R$-modules and $\tau'$ is tight interior on $\widehat{R_\p}$-modules. Then setting $\sigma=\tau^\dual$ to be the submodule selector $\sigma(M)=0_M^*$, we apply Theorem \ref{thm:colocss} to get the result.
\end{proof}

As a special case, we recover the following fact, implicit in \cite{LySm-Test} as evidenced by the second proof below.

\begin{cor}\label{cor:tcccp}
Let $R$ be a complete Noetherian reduced $F$-finite ring of prime characteristic $p>0$.  Let $E$ be the injective hull of the residue field of $R$.  Let $\p \in \Spec R$.  Then \[
0^*_{E_R(R/\p)} = {}^\p (0^*_E).
\]
\end{cor}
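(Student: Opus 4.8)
The plan is to apply Theorem~\ref{thm:tccoloc} to the specific Artinian module $M = E$, the injective hull of the residue field of $R$. The corollary is then almost immediate, but there is one point that requires care: the left-hand side of the displayed equation involves $E_R(R/\p)$, whereas the naive specialization of Theorem~\ref{thm:tccoloc} gives ${}^\p(0^*_E) = 0^*_{{}^\p E}$. So the real content is the identification ${}^\p E \cong E_R(R/\p)$ (as $\widehat{R_\p}$-modules), after which the result follows by transporting tight closure of zero across this isomorphism.

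First I would recall the definition ${}^\p E = \Hom_R(\Hom_R(E, E_R(R/\m)), E_R(R/\p)) = \Hom_R(E^\vee, E_R(R/\p))$. Since $R$ is complete local Noetherian, Matlis duality gives $E^\vee \cong R$, so ${}^\p E \cong \Hom_R(R, E_R(R/\p)) = E_R(R/\p)$, and this isomorphism is natural/canonical. (Alternatively one can cite that ${}^\p(-)$ is exact and that $E = E_R(R/\m)$ is the ``top'' injective, so its colocalization at $\p$ is the injective hull $E_R(R/\p)$; this is exactly the kind of computation underlying Lemma~\ref{smithlemma} and the results of \cite{Ri-cosupp, Sm-param}.) Then I would apply Theorem~\ref{thm:tccoloc} with $M = E$ to get ${}^\p(0^*_E) = 0^*_{{}^\p E}$, and finally use that tight closure is an isomorphism-preserving submodule selector (it is a \subsel, hence respects the isomorphism ${}^\p E \cong E_R(R/\p)$) to conclude $0^*_{{}^\p E} = 0^*_{E_R(R/\p)}$. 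Chaining these equalities gives $0^*_{E_R(R/\p)} = {}^\p(0^*_E)$.

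The second proof alluded to in the statement (``implicit in \cite{LySm-Test}'') would instead go through the big test ideal: Lyubeznik--Smith show that the big test ideal $\tau_b$ commutes with localization and completion, so $\tau_b(\widehat{R_\p}) = \widehat{\tau_b(R)_\p}$. Dualizing over the respective rings, $0^*_{E_R(R/\p)} = \ann_{\widehat{R_\p}}$-dual of $\tau_b(\widehat{R_\p})$ matches the colocalization of $0^*_E = \ann$-dual of $\tau_b(R)$; concretely, $0^*_E = (R/\tau_b(R))^\vee$ under $E^\vee = R$, and colocalizing commutes with this quotient-and-dualize operation exactly because ${}^\p(-)$ is exact and Matlis duality over $\widehat{R_\p}$ inverts it. I would present this as a remark or alternate proof rather than the main argument.

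The main obstacle, such as it is, is purely bookkeeping: making sure the isomorphism ${}^\p E \cong E_R(R/\p)$ is stated with the correct ring of scalars ($\widehat{R_\p}$ throughout, using that $E_R(R/\p) = E_{\widehat{R_\p}}(\widehat{R_\p}/\p\widehat{R_\p})$ as well), and checking that ``tight closure of zero'' is genuinely transported by an isomorphism of modules — which it is, since any \subsel, and in particular $\sigma(M) = 0^*_M$, satisfies the isomorphism-compatibility axiom. No deep new idea is needed beyond Theorem~\ref{thm:tccoloc} and the standard behavior of Matlis duality over complete local rings.
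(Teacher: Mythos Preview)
Your proposal is correct and matches the paper's own ``Proof 1'' essentially verbatim: use $E^\vee = R$ to identify ${}^\p E = \Hom_R(R, E_R(R/\p)) = E_R(R/\p)$, then apply Theorem~\ref{thm:tccoloc} with $M=E$. Your sketch of the alternate route via \cite{LySm-Test} is also in line with the paper's ``Proof 2,'' which makes the same idea explicit through a chain of isomorphisms using Lemma~\ref{smithlemma}.
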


\begin{proof}[Proof 1, via new methods]
By definition of ${}^\p(-)$ and since $E^\vee = R$, we have ${}^\p E = E_R(R/\p)$.  The result then follows from Theorem~\ref{thm:tccoloc} with $E=M$.
\end{proof}

\begin{proof}[Proof 2, via old methods]
We have the following chain of equalities: \begin{align*}
0^*_{E_R(R/\p)} &= \Hom_{\widehat{R_\p}}(\widehat{R_\p} / \ann_{\widehat{R_\p}} (0^*_{E_R(R/\p)}), E_R(R/\p)) &\text{by Lemma~\ref{smithlemma}} \\
&=\Hom_{\widehat{R_\p}}(\widehat{R_\p} / \ann_R(0^*_E)\widehat{R_\p}, E_R(R/\p)) &\text{by {\cite[Theorem 7.1]{LySm-Test}}}\\
&=\Hom_{\widehat{R_\p}}(\widehat{R_\p} \otimes_R R/\ann_R(0^*_E), E_R(R/\p)) &\text{by flatness of $\widehat{R_\p}$ over $R$}\\
&\cong \Hom_R(R/\ann_R(0^*_E), \Hom_{\widehat{R_\p}}(\widehat{R_\p}, E_R(R/\p))) &\text{by $\Hom$-$\otimes$ adjunction}\\
&\cong \Hom_R(R/\ann_R(0^*_E), E_R(R/\p)) \\
&= \Hom_R((0^*_E)^\vee, E_R(R/\p)) &\text{by Lemma~\ref{smithlemma}} \\
&= {}^\p(0^*_E) \qedhere
\end{align*}
\end{proof}

We also get the following as a corollary:
\begin{thm}
Let $R$ be a complete Noetherian local ring and $B$ a finitely generated $R$-module.  Let $M$ be an artinian $R$-module.  Then \[
 {}^\p(0^{\cl_B}_M) = 0^{\cl_{\widehat{B_\p}}}_{{}^\p M}.
\]
In particular, we have \[
{}^\p (0^{\cl_B}_E) = 0^{\cl_{\widehat{B_\p}}}_{E_R(R/\p)}.
\]
\end{thm}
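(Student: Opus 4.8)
The plan is to deduce this from Theorem~\ref{thm:colocss}, exactly as Theorem~\ref{thm:tccoloc} was, but with the trace/torsion duality of Section~\ref{sec:trace} playing the role that tight closure/tight interior duality played there. First I would set $\sigma(M) := 0^{\cl_B}_M = \tom_B(M)$, a submodule selector on Artinian $R$-modules (the identification $0^{\cl_B}_M = \tom_B(M)$ is immediate from the definition of module closure, as noted in Remark~\ref{rmk:modclosure}), and $\sigma'(M') := 0^{\cl_{\widehat{B_\p}}}_{M'} = \tom_{\widehat{B_\p}}(M')$ on Artinian $\widehat{R_\p}$-modules. By Theorem~\ref{thm:tracedual} together with the involutivity of $\dual$ (Theorem~\ref{thm:smsdual}(\ref{it:duality})), we have $\sigma^\dual = \tr_B =: \tau$ over $R$ and $(\sigma')^\dual = \tr_{\widehat{B_\p}} =: \tau'$ over $\widehat{R_\p}$. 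So it suffices to verify the hypothesis of Theorem~\ref{thm:colocss}, namely that $\tau'(\widehat{N_\p}) = \widehat{\tau(N)_\p}$ for every finitely generated $R$-module $N$.

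This is where the work lies. Since $R$ is Noetherian and $B$ finitely generated, $B$ is finitely presented, and $\widehat{R_\p}$ is flat over $R$. So Theorem~\ref{thm:traceloc}, applied with $X = B$, $M = N$, and $S = \widehat{R_\p}$, gives
\[
\tr_{X',\, B \otimes_R \widehat{R_\p}}(N \otimes_R \widehat{R_\p}) = \im\big(\tr_B(N) \otimes_R \widehat{R_\p} \to N \otimes_R \widehat{R_\p}\big),
\]
where $X' = \{b \otimes s : b \in B,\ s \in \widehat{R_\p}\}$. Now $B \otimes_R \widehat{R_\p} \cong \widehat{B_\p}$ and $N \otimes_R \widehat{R_\p} \cong \widehat{N_\p}$, both modules being finitely generated over the Noetherian ring $R$. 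The set $X'$ contains $\{b \otimes 1 : b \in B\}$, which generates $\widehat{B_\p}$ as an $\widehat{R_\p}$-module, so Proposition~\ref{pr:trbc} (with $T = \widehat{R_\p}$, $L = \widehat{B_\p}$) yields $\tr_{X', \widehat{B_\p}} = \tr_{\widehat{B_\p}} = \tau'$. On the other side, flatness of $\widehat{R_\p}$ makes $\tr_B(N) \otimes_R \widehat{R_\p} \to N \otimes_R \widehat{R_\p}$ injective, so its image is $\tr_B(N) \otimes_R \widehat{R_\p}$; and since $\tr_B(N)$ is a submodule of the finitely generated module $N$ over the Noetherian ring $R$, hence itself finitely generated, this image is $\widehat{\tr_B(N)_\p}$ sitting inside $\widehat{N_\p}$ in the usual way. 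Combining these gives $\tau'(\widehat{N_\p}) = \widehat{\tau(N)_\p}$, as required.

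With the hypothesis of Theorem~\ref{thm:colocss} in hand, its conclusion gives $\sigma'({}^\p M) = {}^\p \sigma(M)$ for every Artinian $R$-module $M$, which unwinds to ${}^\p(0^{\cl_B}_M) = 0^{\cl_{\widehat{B_\p}}}_{{}^\p M}$ (the left-hand side being a submodule of ${}^\p M$ by exactness of ${}^\p(-)$). For the final assertion I would take $M = E$: since $E^\vee \cong R$ over the complete local ring $R$, the definition of $(-)^\p$ gives ${}^\p E = \Hom_R(E^\vee, E_R(R/\p)) = \Hom_R(R, E_R(R/\p)) = E_R(R/\p)$, so substituting $M = E$ yields ${}^\p(0^{\cl_B}_E) = 0^{\cl_{\widehat{B_\p}}}_{E_R(R/\p)}$. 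The only genuine obstacle is the trace-localization identity of the middle paragraph — specifically, reconciling the index set $X'$ produced by Theorem~\ref{thm:traceloc} with the plain trace $\tr_{\widehat{B_\p}}$ via Proposition~\ref{pr:trbc}, and identifying the image in the base-change statement with the completed localization $\widehat{\tr_B(N)_\p}$ via flatness; everything else is bookkeeping.
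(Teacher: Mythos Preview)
Your proposal is correct and follows exactly the same route as the paper, which simply cites Theorems~\ref{thm:colocss}, \ref{thm:traceloc}, and \ref{thm:tracedual}; you have just spelled out the details (including the minor bookkeeping with Proposition~\ref{pr:trbc} to pass from $\tr_{X',\widehat{B_\p}}$ to $\tr_{\widehat{B_\p}}$, and the flatness argument identifying the image with $\widehat{\tr_B(N)_\p}$).
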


\begin{proof}
This follows from Theorems~\ref{thm:colocss}, \ref{thm:traceloc}, and \ref{thm:tracedual}.
\end{proof}

Finally, we observe a curious relationship between $I$-adic completion, $\p$-colocalization, and $\p$-adic completion:

\begin{thm}
Let $R$ be a complete Noetherian local ring, let $\p \in \Spec R$, and $M$ an artinian $R$-module.  Then \[
\bigcap_{n\geq 0}\left( (\widehat{I R_\p})^n \cdot {}^\p M\right) = {}^\p \bigg(\bigcap_{n\geq 0} (I^n M)\bigg).
\]
That is, the colocalization at $\p$ of the kernel of the $I$-adic completion map on $M$ is the same as the kernel of the $\widehat{IR_\p}$-adic completion map on the colocalization of $M$ at $\p$.

Specializing to the case $M=E$, we have \[
\bigcap_{n\geq 0}\left((\widehat{I R_\p})^n \cdot E_R(R/\p)\right) = {}^\p \bigg(\bigcap_{n\geq 0} (I^n E)\bigg).
\]
\end{thm}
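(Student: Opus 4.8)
The plan is to identify both sides of the desired equality as instances of the smile-dual of a zeroth local cohomology functor, and then to deduce the identity from the colocalization principle of Theorem~\ref{thm:colocss}. The starting observation is Corollary~\ref{cor:H0Idual}: applied over $R$ it gives $(H^0_I)^\dual(M) = \bigcap_n (I^n M)$ for every artinian $R$-module $M$, and applied over the complete local Noetherian ring $\widehat{R_\p}$ with the ideal $\widehat{IR_\p}$ it gives $(H^0_{\widehat{IR_\p}})^\dual(N) = \bigcap_n (\widehat{IR_\p})^n N$ for every artinian $\widehat{R_\p}$-module $N$. Since $M$ artinian forces $M^\vee$ to be finitely generated, $\sigma := (H^0_I)^\dual$ is a genuine submodule selector on the category of artinian $R$-modules; likewise, since ${}^\p M$ is an artinian $\widehat{R_\p}$-module (\cite[Theorem 2.3]{Ri-cosupp}), $\sigma' := (H^0_{\widehat{IR_\p}})^\dual$ is a submodule selector on artinian $\widehat{R_\p}$-modules. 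By Theorem~\ref{thm:smsdual}(1) their duals are $\tau := \sigma^\dual = H^0_I$ and $\tau' := (\sigma')^\dual = H^0_{\widehat{IR_\p}}$, regarded as submodule selectors on finitely generated modules over $R$ and over $\widehat{R_\p}$ respectively.

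Next I would verify the finitely-generated-side hypothesis of Theorem~\ref{thm:colocss}, namely that $\tau'(\widehat{N_\p}) = \widehat{\tau(N)_\p}$ for every finitely generated $R$-module $N$ — equivalently $H^0_{\widehat{IR_\p}}(\widehat{N_\p}) = \widehat{H^0_I(N)_\p}$. This is the standard fact that $H^0$ commutes with flat base change: $\widehat{N_\p} \cong N \otimes_R \widehat{R_\p}$ because $N$ is finitely generated, $\widehat{R_\p}$ is flat over $R$, and, as $R$ is Noetherian, $H^0_I(N) = \Hom_R(R/I^n, N)$ for $n \gg 0$ is finitely generated, with the base change passing through this $\Hom$ because $R/I^n$ is finitely presented; hence $H^0_{\widehat{IR_\p}}(\widehat{N_\p}) = H^0_I(N) \otimes_R \widehat{R_\p} = \widehat{H^0_I(N)_\p}$. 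Feeding this into Theorem~\ref{thm:colocss} immediately yields $\sigma'({}^\p M) = {}^\p \sigma(M)$ for every artinian $R$-module $M$, and unwinding both sides through Corollary~\ref{cor:H0Idual} as in the first paragraph turns this into the claimed equation $\bigcap_{n\ge 0}\big((\widehat{IR_\p})^n \cdot {}^\p M\big) = {}^\p\big(\bigcap_{n\ge 0}(I^n M)\big)$.

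For the specialization to $M = E$, I would use that $E^\vee = R$, so ${}^\p E = \Hom_R(\Hom_R(E,E), E_R(R/\p)) = \Hom_R(R, E_R(R/\p)) = E_R(R/\p)$, exactly as in Proof~1 of Corollary~\ref{cor:tcccp}; substituting $M = E$ into the general statement then gives the displayed formula. I expect the only genuinely computational point to be the flat-base-change identity for $H^0_I$, and the one spot requiring care is keeping the finite-generation hypotheses in force so that $\widehat{(-)_\p}$ agrees with $- \otimes_R \widehat{R_\p}$ on the modules in question; everything else is bookkeeping that matches the hypotheses of Theorem~\ref{thm:colocss} to the data produced by Corollary~\ref{cor:H0Idual} and the duality Theorem~\ref{thm:smsdual}.
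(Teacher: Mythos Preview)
Your proposal is correct and follows essentially the same approach as the paper: define $\sigma$ and $\sigma'$ via Corollary~\ref{cor:H0Idual} so that their duals $\tau$, $\tau'$ are zeroth local cohomology, verify the finite-module hypothesis of Theorem~\ref{thm:colocss} by flat base change for $H^0_I$, and then read off the artinian-side conclusion; the specialization to $M=E$ via ${}^\p E = E_R(R/\p)$ is likewise the same. The only cosmetic difference is that the paper cites the general Flat Base Change Theorem for local cohomology rather than your more hands-on justification through $H^0_I(N) = \Hom_R(R/I^n,N)$ for $n\gg 0$.
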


\begin{proof}
Let $\sigma$ be the submodule selector on $R$-modules given by $M \mapsto \bigcap_n I^n M$.  Then by Corollary~\ref{cor:H0Idual}, $\tau = \sigma^\dual = H^0_I$.  Similarly, letting $\sigma'$ be the submodule selector on $\widehat{R_\p}$-modules given by $M \mapsto \bigcap_n (\widehat{IR_\p})^n M$, we have $\tau' = (\sigma')^\dual = H^0_{\widehat{IR_\p}}$. Since local cohomology commutes with flat base change \cite[4.3.2 Flat Base Change Theorem]{BrSh-locobook}, and since completion of a finitely generated module coincides with tensoring with the completed ring, we have \begin{align*}
\tau'(\widehat{N_\p})&=  H^0_{\widehat{IR_\p}} (\widehat{N_\p}) = H^0_{I \widehat{R_\p}}(N \otimes_R \widehat{R_\p}) \\
&= H^0_I(N) \otimes_R \widehat{R_\p} = \widehat{H^0_I(N)_\p}= \widehat{\tau(N)_\p}
\end{align*} for all finitely generated $R$-modules $N$.  Thus, by Theorem~\ref{thm:colocss}, we have for all Artinian $R$-modules $M$ that \[
\bigcap_{n\geq 0}\left((\widehat{I R_\p})^n \cdot {}^\p M\right) = \sigma'(^\p M) = {}^\p \sigma(M) = {}^\p \bigg(\bigcap_{n\geq 0} (I^n M)\bigg).
\]
The final statement follows from the fact that ${}^\p E = E_R(R/\p)$.
\end{proof}

\section*{Acknowledgment}
We are grateful for suggestions from Geoffrey Dietz, Jesse Elliott, Haydee Lindo, Karen Smith, Janet Vassilev, and the anonymous referee, all of which improved the paper.  

\providecommand{\bysame}{\leavevmode\hbox to3em{\hrulefill}\thinspace}
\providecommand{\MR}{\relax\ifhmode\unskip\space\fi MR }
% \MRhref is called by the amsart/book/proc definition of \MR.
\providecommand{\MRhref}[2]{%
  \href{http://www.ams.org/mathscinet-getitem?mr=#1}{#2}
}
\providecommand{\href}[2]{#2}

\end{document}